\definecolor{GoetheBlue}{RGB}{0,97,143}
\def\thm@space@setup{%
  \thm@preskip=5pt \thm@postskip=5pt
}
\newtheorem*{theorem*}{Theorem}
\newtheorem{maintheorem}{Theorem}[section]
\newtheorem{theorem}{Theorem}[section]
\newtheorem{lemma}[theorem]{Lemma}
\newtheorem{proposition}[theorem]{Proposition}
\newtheorem{corollary}[theorem]{Corollary} 
\theoremstyle{definition}
\newtheorem{definition}[theorem]{Definition}
\newtheorem{example}[theorem]{Example}
\newtheorem{remark}[theorem]{Remark}
\newtheoremstyle{myitemstyle}						
	{}			
	{}			
	{}			
	{}			
	{}			
	{.}			
	{ }			
	{}			
\theoremstyle{myitemstyle}
\newtheorem{myitemthm}{}
\newcommand{\R}{\mathbb{R}}
\newcommand{\HH}{\mathbb{H}}
\newcommand{\Z}{\mathbb{Z}}
\newcommand{\Q}{\mathbb{Q}}
\newcommand{\C}{\mathbb{C}}
\newcommand{\N}{\mathbb{N}}
\renewcommand{\P}{\mathbb{P}}
\newcommand{\PP}{\mathbb{P}}
\newcommand{\A}{\mathbb{A}}
\newcommand{\TT}{\mathbb{T}}
\newcommand{\T}{\mathbb{T}}
\newcommand{\RX}{\mathfrak{R}\mathcal{X}}
\newcommand{\RP}{\mathbb{\R}\mathbb{P}}
\newcommand{\calBbar}{\overline{\mathcal{B}}}
\newcommand{\calC}{\mathcal{C}}
\newcommand{\calM}{\mathcal{M}}
\newcommand{\calX}{\mathcal{X}}
\newcommand{\calXbar}{\overline{\mathcal{X}}}
\newcommand{\Muniv}{\mathcal{M}_{\univ}}
\newcommand{\RTP}{\mathfrak{R}\T\P}
\DeclareMathOperator{\Spec}{Spec}
\DeclareMathOperator{\Proj}{Proj}
\DeclareMathOperator{\Hom}{Hom}
\DeclareMathOperator{\val}{val}
\DeclareMathOperator{\supp}{supp}
\DeclareMathOperator{\Trop}{Trop}
\DeclareMathOperator{\id}{id}
\DeclareMathOperator{\Span}{Span}
\DeclareMathOperator{\PGL}{PGL}
\DeclareMathOperator{\trop}{trop}
\DeclareMathOperator{\ev}{ev}
\DeclareMathOperator{\an}{an}
\DeclareMathOperator{\univ}{univ}
\DeclareMathOperator{\sgn}{sgn}
\DeclareMathOperator{\Supp}{Supp}
\DeclareMathOperator{\Cov}{Cov}
\DeclareMathOperator{\triv}{triv}
\newcommand{\siinorm}[1]{\left\lVert #1  \right \rVert^{\text{sgn}}}
\newcommand{\sinorm}[1]{\left\lvert #1  \right \rvert^{\text{sgn}}}
\title{The Signed Goldman--Iwahori space\\
and Real Tropical Linear Spaces} 
\date{}
\author{Kevin K\"uhn}
\address{Technische Universit\"at Berlin,
10587 Berlin, Germany}
\email{kuehn@math.tu-berlin.de}
\author{Arne Kuhrs}
\address{Max Planck Institute for Mathematics in the Sciences,
04103 Leipzig, Germany}
\email{arne.kuhrs@mis.mpg.de}
\begin{document}

\noindent
\maketitle

\begin{abstract} 
For a real closed field $K$ with a non-Archimedean absolute value, we introduce the
\emph{signed Goldman--Iwahori space}, the space of homothety classes of signed
seminorms on a finite-dimensional vector space over $K$. This space merges two
geometric perspectives: it is the linear-algebraic analogue of the real
analytification of projective space introduced by Jell, Scheiderer, and Yu, and a
signed refinement of the Goldman--Iwahori space of seminorms studied in our previous
work. Our first main result identifies it as the inverse limit of all real
tropicalizations of projective space, the real analogue of a theorem of Payne. Our
second main result gives a matroid-theoretic description in terms of the universal
realizable oriented valuated matroid on the underlying vector space.
In the constant coefficient case for $K = \R$, signed seminorms are exactly the diagonalizable ones which is a consequence of hyperplane separation that fails for all other real closed fields. In this case, this yields an explicit description of the signed Goldman--Iwahori space in terms of signed flags of subspaces and lets us show that the space coincides with the real Bergman fan of the universal oriented matroid.
\end{abstract}

{
\renewcommand{\baselinestretch}{0.75}\normalsize
\hypersetup{hidelinks}
\setcounter{tocdepth}{1}
\tableofcontents
\renewcommand{\baselinestretch}{1.0}\normalsize
}

\section*{Introduction}
\thispagestyle{empty}

The Goldman--Iwahori space is the space of non-Archimedean seminorms on a finite-di\-men\-sio\-nal vector space over a non-Archimedean field. It was introduced in \cite{GoldmanIwahori} to give a non-Archimedean analogue of symmetric spaces for the group $\PGL$. If the ground field is spherically complete, this space is a compactification of a building in the sense of Bruhat and Tits. Bruhat--Tits buildings have proven to be an effective framework to study reductive groups over non-Archimedean fields (see \cite{Bruhat_Tits_I, Bruhat_Tits_II}). Their close relation to Berkovich geometry is well-established in the literature (see \cite{berkovich, Werner_seminorms, RemyThuillierWernerI, RemyThuillierWerner_survey}).  Expanding on \cite{DressTerhalle_treeoflife}, the authors in \cite{BKKUV} introduced a tropical approach to study the Goldman--Iwahori space by identifying it with the limit of tropicalized linear spaces and giving a matroidal interpretation.

Suppose the ground field is now also real closed such that the unique ordering is compatible with the absolute value. This paper presents a signed analogue of the Goldman--Iwahori space, taking the order of the field into consideration. We approach this \emph{signed} \emph{Goldman--Iwahori space} by methods of real tropical geometry. This extends the non-Archimedean framework to real tropical geometry, which was introduced in \cite{JSY22}.

We recall some basics of tropical geometry: let $K$ be an algebraically closed field with a non-trivial non-Archimedean absolute value $\lvert\cdot \rvert$. We denote the \emph{tropicalization map} by 
$$\trop: K^n \to (\R\cup \{\infty\})^n,\quad (x_1,\dots,x_n) \mapsto (-\log |x_1|,\dots,-\log|x_n|)\, .$$
The closure of the image of an algebraic subvariety $X\subseteq K^n$ under $\trop$ is called the \emph{tropicalization} of $X$ and usually denoted by $\Trop(X)$. This space has the structure of a finite polyhedral complex of dimension $\dim X$ and its combinatorics carries rich information about the variety $X$, like its degree and Chow cohomology class. 

It is a fundamental trait of tropical geometry that $\Trop(X)$ not only depends on the variety $X$, but also on the chosen embedding into $K^n$. Hence, we will write the tropicalization as $\Trop(X,\iota)$ to emphasize the dependence on the embedding $\iota: X\hookrightarrow K^n$. More generally, one can define tropicalizations of closed subvarieties of any toric variety, not just $K^n$ \cite[\S 3]{payne_analytification}. One can also drop the conditions of $K$ being algebraically closed or non-trivially valued by choosing an algebraically closed, non-trivially field valued extension $L/K$ and defining $\Trop(X,\iota) \coloneqq \Trop(X \times_K L, \iota_L) $.

A natural question is, whether there is a universal tropicalization of a variety that is independent of the embedding. One way to construct this, is taking the projective limit of all tropicalizations with respect to all possible embeddings for a quasi-projective variety $X$. In the influential paper \cite{payne_analytification}, Payne showed that this limit space is homeomorphic to the \emph{Berkovich analytification} $X^{\an}$ of $X$. This analytic space $X^{\an}$, defined by Berkovich, is a connected Hausdorff topological space that contains the set of $K$-points $X(K)$ as a dense subset (see \cite{berkovich} for details). Payne's result led to ample research into limits of tropicalizations, for example in \cite{FGP_limits, KuronyaSouzaUlirsch, GiansiracusaGiansiracusa}. 

Recently, there have been two new developments that this paper combines:
\begin{enumerate}[(a)]
\item If one considers $X=\P^n$, in \cite{BKKUV} it was shown that the \emph{Goldman--Iwahori space} $\calXbar_n(K)$ is the limit of all tropicalizations with respect to \emph{linear} embeddings of $\P^n$ into higher-dimensional projective spaces. This space $\calXbar_n(K)$ consists of homothety classes of seminorms on $(K^{n+1})^{\ast}$ and has first been studied by Goldman and Iwahori as a piecewise linear analogue of symmetric spaces \cite{GoldmanIwahori} for the group $\PGL(K^{n+1})$. If $K$ is spherically complete, it coincides with (a compactification of) the affine Bruhat--Tits building of $\PGL(K^{n+1})$, \emph{e.g.}\ studied by Werner in \cite{Werner_seminorms}. The relation between affine buildings and Berkovich analytic spaces has been well established in \cite{berkovich, RemyThuillierWernerI, RemyThuillierWernerII, RemyThuillierWerner_survey}. 
\item In \cite{JSY22}, the authors define a ``real'' version of a Berkovich analytification: In this setup the field $K$ is real closed instead of algebraically closed and a \emph{real analytification} $X_r^{\an}$ of a variety $X$ is constructed, which takes into account the order on $K$. The authors introduce a non-Archimedean approach to real tropicalizations using the real analytification. Roughly speaking, given an affine variety $X$ over $K$, one restricts the tropicalization map to each orthant of $K^n$ and glues the resulting $2^n$ tropicalizations together. The resulting space $\Trop_r(X, \iota)$ is called the \emph{real tropicalization} of $X$ with respect to the embedding $\iota:X \xhookrightarrow{} K^n$. In \cite{JSY22}, it was shown that $X_r^{\an}$ is the limit of all real tropicalizations. Real tropicalizations have garnered particular attention due to Viro's patchworking \cite{viro_patchworking}, which was one of the earliest achievements of what is now called tropical geometry.
An equivalent perspective on real tropicalizations is given by real phase structures, and recently, in \cite{RRS23} the authors defined the real part of a smooth tropical variety equipped with a real phase structure, which is locally (the topological realization of) an oriented matroid \cite{RRS22}. 

\end{enumerate}

We merge these two approaches: Consider a real closed field $K$ equipped with a compatible non-Archimedean absolute value. An example is the field of real Puiseux series $\R\{\!\{t\}\!\} =\bigcup_{n \in \N} \R(\!(t^{\frac{1}{n}})\!)$, where the positive Puiseux series are those with positive leading coefficients. Let $X=\P^n$ be the projective space over $K$. We define the \emph{signed Goldman--Iwahori space} $\RX_n(K)$ to be the space of homothety classes of non-trivial signed seminorms on $(K^{n+1})^{\ast}$ (\emph{cf.} Definition \ref{def:signedGIspace}) equipped with the topology of pointwise convergence. We study the geometry of this space via real tropical geometry, in particular via real tropical linear spaces:

Let $I$ be the cofiltered category of linear embeddings $\P^n \hookrightarrow U \subseteq \P^m$, where $U$ is a torus-invariant open subset of $\P^m$ with morphisms given by commutative triangles 
\begin{center}
\begin{tikzcd}
\P^n \ar[hookrightarrow,r,"\iota"] \ar[hookrightarrow,dr,"\iota'"'] & U \ar[d] \\
 & U'
\end{tikzcd}
\end{center}
where $U\to U'$ is a composition of coordinate projections and permutations. For any linear embedding $\iota: \P^n \hookrightarrow U\subseteq \P^m$ as above, we construct a surjective and continuous map $\pi_{\iota}: \RX_n(K)\to \Trop_r(\P^n,\iota)$. For any morphism in $I$, the coordinate projections and permutations induce a continuous map between the respective real tropicalizations. This setup allows us to formulate and prove:

\begin{maintheorem}[Theorem \ref{thm:limits}]\label{mainthm:limits}
The canonical map 
$$\varprojlim_{\iota \in I} \pi_{\iota}\colon \RX_n(K) \longrightarrow\varprojlim_{\iota \in I} \Trop_r\big(\PP^n,\iota\big)$$ 
    is a homeomorphism.
\end{maintheorem}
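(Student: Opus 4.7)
The plan is to mimic, step by step, the proof of the analogous homeomorphism for the unsigned Goldman--Iwahori space in \cite{BKKUV}, carefully tracking signs throughout. Each $\pi_{\iota}$ is continuous by construction, so the universal property of the limit gives continuity of $\varprojlim \pi_{\iota}$ automatically; both sides moreover carry the topology generated by evaluating (signed) seminorms at finitely many linear functionals, so continuity of an eventual set-theoretic inverse will be essentially tautological. The content of the theorem thus reduces to proving that $\varprojlim \pi_{\iota}$ is a bijection.

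For injectivity, suppose $\eta_1$ and $\eta_2$ represent distinct classes in $\RX_n(K)$. Then they must disagree, either in absolute value or in sign, on some finite collection of linear forms $\ell_0,\dots,\ell_m \in (K^{n+1})^{\ast}$. Choosing a linear embedding $\iota: \PP^n \hookrightarrow U \subseteq \PP^m$ whose coordinate pullbacks include the $\ell_i$ up to nonzero scalars, the definition of $\pi_{\iota}$ as coordinate-wise signed tropicalization of values immediately yields $\pi_{\iota}(\eta_1) \neq \pi_{\iota}(\eta_2)$, separating the two classes already at stage $\iota$.

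Surjectivity is the main obstacle. Given a compatible family $(p_{\iota})_{\iota} \in \varprojlim_{\iota} \Trop_r(\PP^n,\iota)$, I would reconstruct a signed seminorm $\eta$ on $(K^{n+1})^{\ast}$ by, for each $\ell \in (K^{n+1})^{\ast}$, choosing an embedding $\iota$ in which $\ell$ appears as a coordinate function (up to scalar) and reading off the corresponding signed coordinate of $p_{\iota}$. Compatibility of the family under the coordinate projections and permutations generating the morphisms in $I$ guarantees that the value is independent of the chosen $\iota$. The delicate step is verifying that the resulting function $\eta$ actually satisfies the defining axioms of a signed seminorm, namely the ultrametric inequality together with the sign compatibility conditions relating $\eta(\ell_1 + \ell_2)$ to the signs of $\eta(\ell_1)$ and $\eta(\ell_2)$. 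Each such relation involves only finitely many linear forms and can be tested inside a single embedding $\iota$ whose coordinate functions contain them; because $p_{\iota}$ lies in $\Trop_r(\PP^n,\iota)$, which is cut out by exactly the signed tropicalized linear relations, the required identities for $\eta$ then follow.

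The principal conceptual difficulty throughout is the careful bookkeeping of signs under sums, scalar multiplication, and coordinate projections, and matching them to the combinatorics of real tropicalization and to the morphisms in the diagram $I$. Once the bijection is established, the topologies on both sides are generated by the same finite evaluations of linear forms, so the inverse bijection is continuous as well, and we conclude that $\varprojlim \pi_{\iota}$ is a homeomorphism, in complete parallel with the unsigned case.
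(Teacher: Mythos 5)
Your proposal follows the same blueprint as the paper's own proof: reduce continuity of the inverse to the matching generating sets of the two topologies, prove injectivity by separating distinct homothety classes at a single finite embedding, and prove surjectivity by reconstructing a signed seminorm from a compatible family and verifying the axioms inside individual embeddings using the circuit description of $\Trop_r(\PP^n,\iota)$ (Propositions~\ref{prop:tableratropicalbasis} and~\ref{prop:tropIsProj}). Two small but non-negligible points are glossed over, however. In the injectivity step, the fact that $\eta_1$ and $\eta_2$ ``disagree'' on some finite family of forms is not by itself enough to separate them at any $\pi_\iota$, since homothetic seminorms also disagree pointwise everywhere while sharing the same image in $\R\P^m$; what one actually needs (and what the paper does) is to first establish that the two seminorms have the same kernel, then choose two forms $f,g$ outside that kernel whose ratios $\siinorm{f}_i/\siinorm{g}_i$ differ, and separate at an embedding including both. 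In the surjectivity step, ``reading off the corresponding signed coordinate of $p_\iota$'' is not a well-defined scalar, since $p_\iota$ lives in projective space $\R\P^m$; the paper fixes a reference form $e_0^\ast$ with nonzero tropical coordinate (shown to propagate through all relevant embeddings) and defines $\siinorm{f}$ as a \emph{ratio} $y_{\jmath,1}/y_{\jmath,0}$, thus pinning down a homothety representative. With those two normalizations inserted, your argument is essentially the paper's.
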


Theorem \ref{mainthm:limits} can both be viewed as a signed version of \cite[Theorem A]{BKKUV} or as a linear version of \cite[Theorem 6.14]{JSY22}. Even though the signed Goldman--Iwahori space $\RX_n(K)$ is not a building in the sense of Bruhat and Tits, it can be seen as a signed analogue of the building of $\PGL$. From this perspective, it is a real analytic and non-Archimedean analogue of a symmetric space for $\PGL$.  It might be interesting to examine this space more closely in order to study reductive groups over real closed fields. Taking absolute values yields a natural, surjective, and continuous map $\RX_n(K)\to \calXbar_n(K)$ to the Goldman--Iwahori space (\emph{cf.} Section \ref{section:relGIspace}).

Valuated matroids are the combinatorial objects that naturally arise with (ordinary) tropicalizations of linear spaces. To every linear embedding $\iota: \P^n \hookrightarrow \P^m$ we may associate a valuated matroid that uniquely determines the tropicalization $\Trop(\P^n,\iota)$. Even in the non-realizable case, one can associate a linear space to a valuated matroid, which agrees with the Bergman fan in the case of trivial valuation. In \cite{BKKUV}, the construction was generalized to allow us to describe finite rank matroids on infinite ground sets. In \cite[Theorem C]{BKKUV}, it was also shown, that the Goldman--Iwahori space $\calXbar_n(K)$ is the tropical linear space associated to the \emph{universal realizable valuated matroid} $M_{\univ}$. This matroid has as ground set $(K^{n+1})^{\ast}$ and the rank function is given by the dimension of the subspace spanned by a subset. 

When the field $K$ is real closed (or, more generally, ordered), the additional decoration on a matroid is that of an orientation, which has been studied extensively \cite{orientedMatroidsBook, gebert_ziegler}. As matroids capture the combinatorics of linear dependence over fields, oriented matroids capture the combinatorics of linear dependence over ordered fields, taking into account signs of linear dependencies. If the field $K$ is in addition equipped with a compatible non-Archimedean value, we obtain an \emph{oriented valuated matroid}, which is a matroid with both decorations that satisfy a compatibility condition. 

Baker and Bowler introduce matroids with decorations in the language of hyperfields \cite{bakerbowler2016matroids}, where valuated/oriented/oriented valuated matroids arise as matroids over the tropical hyperfield $\T$/the sign hyperfield $\mathbb{S}$/the real tropical hyperfield $\R\T$.

To every oriented valuated matroid $\mathcal{M}$ on a ground set $E$, one can associate a \emph{real tropical linear space} $\Trop_r(\mathcal{M})$ \cite{tabera2015real, jurgens2018real}. If $\mathcal{M}$ is represented by a linear embedding over a real closed field, then the real tropicalization of the linear embedding coincides with the real tropical linear space associated to $\mathcal{M}$. This extends the trivial valuation case considered in \cite{ArdilaKlivansWilliams, celaya-diss} who shows that the real tropical linear space equals the real Bergman fan of the associated oriented matroid. 

We generalize these ideas and construct a real tropical linear space for any oriented valuated matroid of finite rank, whose ground set explicitly need not be finite (see Section \ref{section:Muniv}). We then show:
\begin{maintheorem}[Theorem \ref{thm:GIspaceIsTropMuniv}]\label{mainthm:GIspaceIsTropMuniv}
There is a homeomorphism
$$\RX_{n}(K) = \Trop_r(\Muniv)\, .$$
\end{maintheorem}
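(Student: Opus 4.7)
My plan is to identify both sides of the claimed homeomorphism with the same cofiltered inverse limit and invoke Theorem~\ref{mainthm:limits}, which already establishes $\RX_n(K) \cong \varprojlim_{\iota \in I} \Trop_r(\P^n, \iota)$. It therefore suffices to produce a natural homeomorphism between $\Trop_r(\Muniv)$ and this same inverse limit.

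The first step is to unpack the definition of $\Trop_r(\Muniv)$ from Section~\ref{section:Muniv}. Since $\Muniv$ has finite rank $n+1$ but infinite ground set $(K^{n+1})^*$, its real tropical linear space is constructed as the projective limit $\varprojlim_S \Trop_r(\Muniv|_S)$, indexed by finite subsets $S \subseteq (K^{n+1})^*$, with transition maps induced by inclusion and corresponding on real tropical linear spaces to coordinate projections. The second step is to match indexing categories: every finite spanning subset $S = \{v_0,\dots,v_m\}$ determines a linear embedding $\iota_S\colon \P^n \hookrightarrow U_S \subseteq \P^m$, $[x] \mapsto [v_0(x):\cdots:v_m(x)]$, and conversely every object of $I$ arises this way; an inclusion $S \subseteq S'$ matches a coordinate projection/permutation morphism in $I$, so after restricting to a cofinal subcategory of spanning subsets the two indexing systems agree. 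Since $\Muniv|_S$ is by construction the oriented valuated matroid realized by $\iota_S$, the realizable case of the identification between real tropical linear spaces of oriented valuated matroids and real tropicalizations of linear embeddings (already discussed in the introduction and set up in the paper) furnishes a homeomorphism $\Trop_r(\Muniv|_S) \cong \Trop_r(\P^n, \iota_S)$, natural in $S$. Taking inverse limits and composing with Theorem~\ref{mainthm:limits} then yields
\[
\Trop_r(\Muniv) \;=\; \varprojlim_S \Trop_r(\Muniv|_S) \;\cong\; \varprojlim_{\iota \in I} \Trop_r(\P^n, \iota) \;\cong\; \RX_n(K),
\]
and a straightforward check on a single signed seminorm, using that $\pi_{\iota_S}$ records exactly the signed values $(\pm \lVert v_0 \rVert, \dots, \pm \lVert v_m \rVert)$ modulo homothety, confirms that the composite agrees with the natural evaluation-on-the-ground-set map.

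The main obstacle I anticipate is the careful verification in the realizable case that the identification $\Trop_r(\Muniv|_S) \cong \Trop_r(\P^n, \iota_S)$ is compatible with the transition maps so that the inverse limits agree in the topological, and not merely set-theoretic, category; this requires surjectivity and continuity of the transition maps throughout the inverse system. The genuinely signed content lies in checking that the sign data carried by the oriented valuated circuits of $\Muniv|_S$ matches the orthant decomposition of $(K^*)^{m+1}$ used in the definition of $\Trop_r(\P^n, \iota_S)$; this is what upgrades the unsigned argument of \cite{BKKUV} to the signed setting. Once this compatibility is in place, standard cofinality arguments for inverse limits finish the proof.
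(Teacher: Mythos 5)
Your proof is correct, but it takes a genuinely different route from the one the paper uses. The paper's own proof is a direct verification inside $\R\P^E$: given a signed seminorm, apply Cramer's rule to an arbitrary tuple $C=(f_0,\dots,f_{n+1})$ to obtain the linear dependence $\sum_i (-1)^i\det(C-f_i) f_i = 0$, and then observe that the strong triangle inequality is precisely the hyperfield circuit condition $0 \in \bigoplus_i (-1)^i y_{f_i}\varphi_{\univ}(C - f_i)$; conversely, the circuit conditions for all such $C$ reassemble into the axioms of a signed seminorm. Both sides are defined as subsets of $\R\P^E$, so equality as sets, together with the shared subspace topology, gives the homeomorphism in one step. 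Your argument instead routes everything through Theorem~\ref{mainthm:limits}, identifying $\Trop_r(\Muniv)$ with $\varprojlim_{E'} \Trop_r(\Muniv|_{E'})$, matching the indexing category of finite spanning subsets with the category $I$ of linear embeddings, and appealing to Proposition~\ref{prop:tableratropicalbasis} to identify $\Trop_r(\Muniv|_{E'}) = \Trop_r(\P^n,\iota_{E'})$. This is in fact exactly the alternative proof sketched in Remark~\ref{rem:GIspaceIsTropMuniv}(b), so it is a valid path; what it buys is a conceptual reading of the theorem as the ``matroidal shadow'' of the limit theorem, while the paper's direct proof is shorter, self-contained, and does not depend on Theorem~\ref{mainthm:limits} at all.

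One small inaccuracy worth correcting: you describe $\Trop_r(\Muniv)$ as ``constructed as the projective limit $\varprojlim_S \Trop_r(\Muniv|_S)$,'' but the paper actually defines $\Trop_r(\Muniv)$ directly as the subset of $\R\P^E$ cut out by the circuit conditions indexed by $(n+2)$-tuples in $E^{n+2}$. The inverse limit description is a consequence, not the definition, and your argument should supply it as a (short) lemma: a point $(y_e)_{e\in E}$ lies in $\Trop_r(\Muniv)$ iff all its finite restrictions do, which holds because each circuit condition involves only finitely many coordinates, and the transition maps (coordinate projections) are surjective because any compatible family can be extended. You should also say explicitly that $\Muniv|_{E'}$ coincides with the oriented valuated matroid $\calM_{\iota_{E'}}$ realized by the embedding $\iota_{E'}$, which follows directly from the determinant formula for $\varphi_{\univ}$ and Example~\ref{exa:pushforwardsofmatroids}(d); this is the bridge you need before invoking Proposition~\ref{prop:tableratropicalbasis}. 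With these two points made precise, your cofinality-and-limits argument goes through.
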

Here, $\Muniv$ denotes the \emph{universal realizable oriented valuated matroid} with ground set $E=(K^{n+1})^{\ast}$.
This gives $\RX_n(K)$ an appealing interpretation as the real tropicalization of the natural universal embedding 
\[\iota_{\univ}: \P^{n}\longhookrightarrow \P\left( K^{E}\right) \, .\] 
In particular, this allows for a notion of a real tropicalization of an embedding of a finite-dimensional variety into an infinite-dimensional vector space. 
This gives an interpretation of the signed Goldman--Iwahori space as a universal realizable real tropical linear space.

In the special case $K=\R$ with trivial valuation signed seminorms are always diagonalizable, which need not even be true for other trivially valued fields (\emph{cf.}\ Example \ref{ex:not_diagonalizable}). Therefore, in general, computing signed Goldman--Iwahori spaces turns out to be difficult. However, if $K=\R$, we can explicitly parameterize the signed Goldman--Iwahori space and determine the fibers of the map $\RX_n(\R)\to \calXbar_n(\R)$.

In the case of trivial valuation, the real tropical linear space of an oriented matroid agrees with its \emph{real Bergman fan} studied in \cite{celaya-diss}. We show that for the universal realizable oriented matroid, this is still true if $K=\R$, but fails for other real closed fields (\emph{cf.} Theorem \ref{thm:realBMfanBuilding} and Remark \ref{remark:infinitebergmanfan} (d)). Tropical linear spaces relate to tropical convexity (see \cite[\S10]{joswig2021essentials} for details), in the same way that real tropical linear spaces relate to signed tropical convexity \cite{loho2022signedtropicalhalfspacesandconvexity}. The real tropicalization of a linear space is the TC-convex hull of all $\R\T$-cocircuits of the associated $\R\T$-matroid \cite[Theorem 7.8]{loho2022signedtropicalhalfspacesandconvexity}. We give an interpretation of this result for the real tropicalization of the universal realizable oriented matroid $\calM_{\univ}$.

\subsection*{Conventions} We write $\overline{\R} = \R \cup \{\infty\}$. For any set $E$, we write $\T\P^E \coloneqq  \big(\overline{\R}^E\setminus \infty\big)/\R\mathbf{1}$, where $\mathbf{1}$ refers to the function with constant value $1$. If $E=\{0,\dots,n\}$ for some natural number $n$, we write $\T\P^n$ for $\T\P^E$.
For a field $K$, which will be clear from the context, we write $\A^n = \Spec(K[t_1,\dots,t_n])$ for affine space and $\P^n = \Proj( K[t_0,\dots,t_n])$ for projective space. If $K=\R$, we write $\R\P^n$ for the set of $\R$-points of $\P^n$ and consider this space with the Euclidean topology.
We denote the field with two elements by $\Z_2 = \{0,1\}$.
We consider the ordering $+,- > 0$ of signs which we extend componentwise to a partial order on sign vectors. Given an ordered field $K$, we denote by $\sgn: K \to \{0,+1,-1\}$ the sign function.

\subsection*{Acknowledgements}
This project was initiated 2023 at the  Graduate Student Meeting on Applied Algebra and Combinatorics in Stockholm, where we had the opportunity to present our project on buildings and tropical linear spaces \cite{BKKUV}. Kris Shaw pointed us to the real analytification and real tropicalization of Jell, Scheiderer, and Yu, thus inspiring us to investigate a real/signed version of our story.  We thank Kris Shaw and further all speakers and organizers at this occasion. Additionally, the authors want to thank Luca Battistella, Andreas Gross, Alex K\"uronya, Georg Loho, Oliver Lorscheid, Alejandro Martinez Mendez, Beatrice Pozzetti, Kemal Rose, Ben Smith, Pedro Souza, Martin Ulirsch, Alejandro Vargas, and Annette Werner for helpful conversations and discussions at various stages of the project.

\subsection*{Funding} This project has received funding from the Deutsche Forschungsgemeinschaft (DFG, German Research Foundation) TRR 326 \emph{Geometry and Arithmetic of Uniformized Structures}, project number 444845124, from the DFG Sachbeihilfe \emph{From Riemann surfaces to tropical curves (and back again)}, project number 456557832, as well as the DFG Sachbeihilfe \emph{Rethinking tropical linear algebra: Buildings, bimatroids, and applications}, project number 539867663, within the
SPP 2458 \emph{Combinatorial Synergies}. 


\section{Real Tropical Geometry} \label{section:realtropicalgeometry}

We describe the \emph{real part} of tropical projective space which is homeomorphic to the real projective space $\R\P^n$. This space is the ambient space of the real tropicalizations of subvarieties of $\P_K^n$ over a real closed field $K$ with compatible absolute value. We describe this construction of a real tropicalization, which, in contrast to ordinary tropicalization, also takes the order on $K$ into account. A more general and thorough treatment can be found for example in \cite{RRS23}.

\subsection{Tropical Patchworking}
Let $\overline{\R} = \R \cup \{\infty\}$. For $n\in \N$, the \emph{tropical projective space} of dimension $n$ is given by $\T\P^n\coloneqq  \overline{\R}^{n+1} \setminus \{(\infty, \dots, \infty )\} /\R \mathbf{1}$, where $\mathbf{1}=(1,\dots,1)$. Via the identification $\overline{\R}\cong \R_{\geq 0},\ x \mapsto \exp(-x)$, we can identify $\T\P^n \cong \R_{\geq 0}^{n+1} \setminus \{(0,\dots,0)\} / ( \R_{>0})$. The space $\T\P^n$ is naturally stratified, where the closures of strata are given by $\{(x_0:\dots:x_n)\in \T\P^n\mid x_i=\infty\  \forall i \in I\}$, where $I\subsetneq \{0,\dots,n\}$.

We can glue together $2^n$ symmetric copies of $\T\P^n$ indexed by $\varepsilon\in \Z_2^{n+1}/\mathbf{1}$ to obtain a space
\[\RTP^n \coloneqq  \bigcup_{\varepsilon \in \Z_2^{n+1}/\mathbf{1}} \T\P^n(\varepsilon) /\sim. \]
Here, $\sim$ identifies the codimension-one strata $\{(x_0:\dots: x_n)\in \T\P^n(\varepsilon) \mid x_i=\infty\}$ and $\{(x_0:\dots: x_n)\in \T\P^n(\varepsilon') \mid x_i=\infty\}$ if $\varepsilon+\varepsilon'=e_i$ in $\Z_2^{n+1}/\mathbf{1}$. The identification $\T\P^n \cong \R_{\geq 0}^{n+1} \setminus \{(0,\dots,0)\} / ( \R_{>0})$ yields an identification
\begin{align*}
\RTP^n &\cong \R\P^n, \\
(\varepsilon, (x_0:\dots:x_n)) &\mapsto \bigl((-1)^{\varepsilon_0}\exp(-x_0):\dots:(-1)^{\varepsilon_n}\exp(-x_n)\bigr).
\end{align*}

There is an obvious continuous retraction map $\lvert \cdot \rvert_{\R\P^n}: \R\P^n \to \R_{\geq 0}^{n+1} \setminus \{(0,\dots,0)\} / ( \R_{>0}) \cong \T\P^n$, sending each $(x_0:\dots : x_n) \in \R\P^n$ to $(|x_0|:\dots:|x_n|)$, where $\lvert \cdot \rvert$ denotes the absolute value on the real numbers. 

\begin{remark}
This is a special case of the construction $\mathfrak{R}\T \Sigma$ in \cite{RRS23, gelfandKapranovZelevinsky} associated to a pointed polyhedral fan $\Sigma$, which is a space homeomorphic to the real part of a complex toric variety (in our setting $\P^n_{\C}$) by gluing together multiple symmetric copies of the tropical toric variety (in our setting $\T\P^n$).  In \cite{RRS23} this is the ambient space of the \emph{patchworks} of tropical varieties.
\end{remark}

\begin{example}
We make the space $\RTP^2 \cong \R\P^2$ explicit by gluing four triangles, each homeomorphic to $\T\P^2$, together as in Figure \ref{fig:RTP2}
and identifying antipodal points. The retraction map $\lvert \cdot \rvert_{\RP^2}$ folds up the four triangles to one.
\begin{center}

\begin{figure}[h]
\begin{tikzpicture}[scale=.5]

\coordinate (0) at (0,0);
\coordinate (1) at (3,0);
\coordinate (2) at (-3,0);
\coordinate (3) at (0,3);
\coordinate (4) at (0,-3);

\foreach \i in {0,1,2,3,4} {\path (\i) node[circle, black, fill, inner sep=1]{};}
\draw[black] (1)--(2)--(3)--(4)--(1)--(3)--(2)--(4);

\draw[->] (6,0)--(7,0);
\coordinate (5) at (9,-1);
\coordinate (6) at (12,-1);
\coordinate (7) at (9,2);
\draw[black] (5)--(6)--(7)--(5) ;

\end{tikzpicture}
\caption{The map $\RTP^2\to \T\P^2$.}
\label{fig:RTP2}
\end{figure}
\end{center}
\end{example}
For the sake of clarity, we will henceforth always use the coordinates of $\R\P^n$ and omit any logarithms. The downside is, that our tropicalizations are not polyhedral complexes in these coordinates. However, the formulas are much clearer since these coordinates make it easier to keep track of the signs.

\subsection{Ordered and Real Closed Fields}
We recall the definition of an \emph{ordering} on a ring: For a commutative ring $A$, a subset $P \subseteq A$ is called an \emph{ordering} of $A$ if $P+P \subseteq P, P \cdot P \subseteq P, P \cup -P = A,$  and $P \cap -P $ is a prime ideal of $A$. The support of $P$ is $\text{supp}(P) \coloneqq  P \cap -P$. To $P$ we associate an order relation and a sign function in the natural way: Write $f >_P 0$ and $\sgn_P(f) = +1$  if $-f \notin P$, write $f \geq_P 0$ if $f \in P$, and $\sgn_P(f) = 0$ if $f \in \text{supp}(P)$. We have $\sgn_P(-f) = - \sgn_P(f)$. We may refer to the ordering as either $P$ or $<_P$.  An ordered field $K$ is \emph{real closed} if every non-negative element $x \geq 0$ has a square root in $K$. If $K$ is real closed, there is a unique ordering on $K$ defined by taking the positive elements to be precisely the non-zero squares. Every ordered field has a real closed extension, called the \emph{real closure}. Whenever a real closed field is equipped with a non-Archimedean absolute value $\lvert \cdot \rvert_K$, we assume that the absolute value is \emph{compatible} with the (unique) ordering. This means, if $0 \leq a \leq b$ then $ |a|_K \leq |b|_K$, or equivalently, if $|a|_K > |b|_K $ then $ \sgn(a+b) = \sgn(a) = \sgn(a-b)$.

\begin{example}\label{ex:puiseux_series}
Let $\R\{\!\{t\}\!\}$ be the field of \emph{real Puiseux series}, \emph{i.e.}, 
\[ \R\{\!\{t\}\!\} =\bigcup_{n \in \N} \R(\!(t^{\frac{1}{n}})\!)\, .\] Every element of  $\R\{\!\{t\}\!\}$ is a power series with real coefficients and rational exponents
with bounded denominator. This field is equipped with the absolute value that maps $f=\sum_{q \in \Q} a_q t^q$ to $\exp(-q_0)$, where $q_0$ is the minimal index such that $a_{q_0}\neq 0$. Moreover, let $\sgn (f)\coloneqq \sgn(a_{q_0})$, \emph{i.e.}, the positive Puiseux
series are those with positive leading coefficients. Then $\R\{\!\{t\}\!\}$ is a real closed field whose absolute value is compatible with the order.
\end{example}

\subsection{Real Tropicalization} \label{section:realtropicalization}

We will begin by providing a brief overview of ordinary tropicalization. Let $K$ be an algebraically closed field with a non-trivial non-Archimedean absolute value $\lvert \cdot \rvert_K: K\to \R_{\geq 0}$. Denote the \emph{tropicalization map} by 
\begin{align*}
\trop: \P^n(K) &\longrightarrow (\R_{\geq 0}^{n+1} \setminus \{0\}) / \R_{>0}\, ,\\
(x_0: \dots :x_n) &\longmapsto (|x_0|_K : \dots : |x_n|_K).
\end{align*}
For any closed subvariety $\iota: X\hookrightarrow \P^n  $ the \emph{tropicalization} $\Trop(X,\iota)$ is defined to be the closure of the image of $X$ under $\trop$. If $K$ is trivially valued or not algebraically closed, we pass to a non-trivially valued, algebraically closed extension $L/K$ and define $\Trop(X,\iota)\coloneqq \Trop(X_L,\iota_L) \subseteq (\R_{\geq 0}^{n+1} \setminus \{0\}) / \R_{>0}$, where $X_L= X\times_K L$. This is independent of the choice of $L$ (\emph{e.g.}\ by \cite[Proposition 3.8]{Gubler_guide}).

\begin{remark}
It is more standard to write $\trop$ in the coordinates of $\T\P^n$, for which we have $$\trop(x_0:\dots:x_n)=(-\log |x_0|_K : \dots : -\log |x_n|_K)\, .$$
In these coordinates, the Bieri--Groves theorem \cite[Theorem A]{bierigroves1984geometry} and \cite[Theorem 2.2.3]{einsiedlerkapranovlind2006} states that, if $X$ is irreducible, then $\Trop(X, \iota)$ has the structure of a pure-dimensional rational polyhedral set of the same dimension as $X$.
\end{remark}

Let now $K$ be real closed instead of algebraically closed. We assume that $\lvert \cdot \rvert_K$ is compatible with the unique ordering on $K$. 
The \emph{real tropicalization map} is given by 
\begin{align*}
\trop_r: \P^n(K) &\longrightarrow \R\P^n\, , \\
(x_0: \dots :x_n) &\longmapsto (\sgn(x_0) |x_0|_K : \dots :\sgn(x_n) |x_n|_K).
\end{align*}
Note that this map takes into account the order on $K$. This can be seen as an orthantwise tropicalization, where we restrict the tropicalization map to each orthant and glue the resulting tropicalizations together. The affine version of this map is considered by Jell, Scheiderer, and Yu in \cite{JSY22} and \cite{tropicalspectahedra}, where the authors study properties of the images of semialgebraic sets under the real tropicalization map. The logarithmic version of this construction, without signs, was used by Alessandrini in \cite{alessandrini2013logarithmic} who showed that the logarithmic limit of a real semialgebraic set is a polyhedral complex.

As before, for every closed subvariety (or, more generally, for every semialgebraic subset) $\iota: X\hookrightarrow \P^n$, we define the \emph{real tropicalization} $\Trop_r(X, \iota) \subseteq \R\P^n$ as the closure of the image of $X(K)$ under $\trop_r$. If $K$ is trivially valued, we choose a non-trivially valued, real closed field extension $L/K$ such that the absolute value on $L$ extends the one
on $K$ and define $\Trop_r(X, \iota)\coloneqq \Trop_r(X_L, \iota_L)\subseteq \R\P^n$. The real tropicalization is independent of the choice of extension by \cite[Theorem 6.9]{JSY22}. By construction, the map $\lvert \cdot \rvert_{\R\P^n}: \R\P^n \to \T\P^n$ maps $\Trop_r(X,\iota)$ to $\Trop(X,\iota)$.

\begin{example}
Let $K=\R$ be equipped with the trivial valuation and let  $X=V(x_0+x_1-x_2)\subseteq \P^2_{\R}$. We want to compute $\Trop_r(X,\iota)$, where $\iota: X\hookrightarrow \P^2$ is the inclusion. Consider the base change to the field $L$ of real Puiseux series. Let $(y_0:y_1:y_2) \in X_L(L)=V(x_0+x_1-x_2)\subseteq \P^2_L$, \emph{i.e.}, $y_0+y_1=y_2$. Then 
$$\sgn((y_0,y_1,y_2))\in \{(+,+,+),(-,-,-),(+,-,-),(-,+,+),(+,-,+),(-,+,-)\},$$
\emph{i.e.}, if $\sgn(y_0)=\sgn(y_1)$, then all signs are the same. 
By the axioms of a non-Archimedean valuation it is clear that $|y_2|\leq \max(|y_0|,|y_1|)$ and that, if $|y_0|\neq |y_1|$, we have equality. Moreover, we also have equality if $\sgn(y_0)=\sgn(y_1)$. Finally, if $\sgn(y_0)\neq \sgn(y_1)$ and $|y_0|= |y_1|$, then $\sgn(y_2)\cdot|y_2|$ can have any value in $[-|y_0|,|y_0|]\cap \Q$. We may choose representatives in $\P^2$ such that $\max\{|y_0|,|y_1|,|y_2|\}=1$ and $y_2\geq 0$. In Figure \ref{fig:tropLine} the real tropicalization $\Trop_r(X,\iota)$ is now indicated in red and the (usual) tropicalization $\Trop(X,\iota)$ in blue.

\begin{figure}[h]

\begin{center}
\begin{tikzpicture}[scale=.5]

\coordinate (0) at (0,0);
\coordinate (1) at (3,0);
\coordinate (2) at (-3,0);
\coordinate (3) at (0,3);
\coordinate (4) at (0,-3);

\coordinate (8) at (-1.5,1.5);
\coordinate (9) at (-1,1);
\coordinate (10) at (1,1);
\coordinate (11) at (1,-1);
\coordinate (12) at (1.5,-1.5);

\foreach \i in {0,1,2,3,4} {\path (\i) node[circle, black, fill, inner sep=1]{};}
\draw[black] (1)--(2)--(3)--(4)--(1)--(3)--(2)--(4);
\draw[red] (8)--(9)--(10)--(11)--(12);

\node[below left=0mm of 0] {\small (0,0,1)};
\node[right] at (1) {\small (1,0,0)};
\node[left] at (2) {\small (-1,0,0)};
\node[above] at (3) {\small (0,1,0)};
\node[below] at (4) {\small (0,-1,0)};

\draw[->] (6,0)--(7,0);

\coordinate (5) at (9,-1);
\coordinate (6) at (12,-1);
\coordinate (7) at (9,2);
\draw[black] (5)--(6)--(7)--(5) ;
\draw[GoetheBlue](9,0)--(10,0)--(10.5,0.5)--(10,0)--(10,-1);

\end{tikzpicture}
\caption{The map $\RTP^2\to \T\P^2$.}
\label{fig:tropLine}
\end{center}
\end{figure}

\end{example}

\begin{remark}\label{rem:realPhaseStructures}
In \cite{RRS23} the authors introduce \emph{real phase structures} on rational polyhedral spaces and tropical varieties.
For a rational polyhedral subspace $X$ in $\R^n$, a real phase structure is an assignment of an affine subspace of $\Z_2^n$ to each facet of $X$. The assignment of a real phase structure is used to describe the \emph{patchwork} of a tropical variety, which should be thought of as its real part. The authors show that a patchwork describes, up to homeomorphism, fibers of real analytic families with non-singular tropical limits. As explained in \cite[\S 4.8]{RRS23} their construction of a real tropicalization, \emph{i.e.}, the patchwork, agrees with the logarithmic version of the real tropicalization from \cite{JSY22} that we described above and use in this paper.
\end{remark}

\section{Matroids over the Real Tropical Hyperfield}\label{section:MatroidsHyperfield}

Matroids over hyperfields due to Baker and Bowler \cite{bakerbowler2016matroids, bakerbowlerpartial} simultaneously generalize linear subspaces, matroids, oriented matroids, and valuated matroids. We recall their notion as well as the hyperfields that are of our interest such as the sign hyperfield, the tropical hyperfield, and, most importantly, the real tropical hyperfield. Matroids over the sign hyperfield correspond to \emph{oriented matroids}, while matroids over the tropical hyperfield correspond to \emph{valuated matroids}. \emph{Oriented valuated matroids} are hybrid objects combining oriented matroids and valuated matroids in a compatible way. From the perspective of matroids over hyperfields they arise naturally as matroids over the real tropical hyperfield $\R\T$. We recall the definition of a real tropical linear space associated to an oriented valuated matroid defined via its $\R\T$-circuits.

\subsection{Hyperfields}
In \cite{viro2010hyperfields} Viro introduced hyperfields as a convenient technique in tropical geometry. Especially in the last few years there has been a surge of interest and research in the realm of hyperfields and tropical geometry \cite{lorscheid_hyperfield, bakerbowler2016matroids, baker_lorscheid_moduli, maxwell2023geometry, maxwell2024generalising}.
A \emph{hyperfield} $\mathbb{H}$ is a set with a multiplication $\cdot$ and addition $\oplus$, where addition may be multivalued, that satisfies axioms similar to those for a field. Several of the following hyperfields were first introduced in Viro's paper \cite{viro2010hyperfields}, to which we refer for precise definitions. 

\begin{example}\label{exa:hyperfields}
\ 
\begin{enumerate}[(a)]
\item Any field $K$ can trivially be considered a hyperfield with its ordinary multiplication and addition, where we consider the sum of two elements as a singleton set.
\item The \emph{Krasner hyperfield} $\mathbb{K}$ as a set is $\mathbb{K} = \{0,1\}$ with the usual multiplication and $-1 = 1$. For addition $0$ is the neutral element and $1 \oplus 1 = \mathbb{K}$. 
\item The \emph{sign hyperfield} $\mathbb{S}$ on elements $\{0,+1,-1\}$ has multiplicative group $(\{\pm 1\},\cdot)$. The addition is given by $0 \oplus x = 0$ for all $x \in \mathbb{S}$, $1 \oplus 1 = 1, -1 \oplus -1 = -1$, and $-1 \oplus 1 = \mathbb{S}$. 
\item The \emph{tropical hyperfield} $\TT$ on elements $\R_{\geq 0}$ with multiplicative notation has as multiplicative group $(\R_{>0},\cdot)$, and the addition is defined by
\begin{align*}
a \oplus b &=\begin{dcases}
\max(a,b) \quad \quad \text{     if } a \neq b,\\
[0,a] \quad  \quad \quad  \quad \text{     if } a = b.\\
\end{dcases}
\end{align*}  
\item The \emph{real tropical hyperfield} $\R\T$  on elements $\R$ has multiplicative group $(\R^*,\cdot)$ with addition
\begin{align*}
a \oplus b &=\begin{dcases}
a \quad \quad \text{     if } |a| > |b|,\\
b \quad  \quad \text{     if } |a| < |b|,\\
a \quad  \quad \text{     if } a = b,\\
[a,b] \quad \text{if } a = -b \leq 0,\\
[b,a] \quad \text{if } a = -b \geq 0.\\   
\end{dcases}
\end{align*}    
\end{enumerate}
\end{example}

A  \emph{homomorphism of hyperfields} is a map $f: \mathbb{H}_1 \longrightarrow \mathbb{H}_2$ such that $f(0) = 0, f(1) = 1, f(x \cdot y) = f(x) \cdot f(y)$, and $f(x \oplus y) \subseteq f(x) \oplus f(y)$ for $x,y \in \mathbb{H}_1$.

\begin{example} \label{exa:hyperfieldhom}
\ 
\begin{enumerate}[(a)]
\item Every hyperfield $\HH$ has a canonical homomorphism into $\mathbb{K}$ by sending $0$ to $0$ and every non-zero element to $1$.
\item The maps $\R\T \longrightarrow \T, a \mapsto |a|$ taking the absolute value and $\R\T \longrightarrow \mathbb{S}, a \mapsto \sgn(a)$ keeping only the sign information are hyperfield homomorphisms.
\item For a field $K$, a map $K\to \mathbb{S}$ is a hyperfield homomorphism if and only if it is of the form $\sgn_P$ for an ordering $P\subset K$.
\item For a field $K$, a map $K\to \T=\R_{\geq 0}$ is a hyperfield homomorphism if and only if it is a non-Archimedean absolute value. 
\item Let $K$ be an ordered field with a compatible absolute value $\lvert \cdot \rvert_K$. The natural signed absolute value map $K \longrightarrow \R\T, x \mapsto \sgn(x) \cdot |x|_K$ is a hyperfield homomorphism.
\end{enumerate} 
\end{example}

\subsection{Matroids over Hyperfields}

\begin{definition}[{\cite{bakerbowler2016matroids}}] \label{def:matroidsoverhyperfieldsGP}
Let $E$ be a non-empty finite set, $\HH$ a hyperfield, and let $n$ be a positive integer. A \emph{Grassmann--Plücker function of rank $n$ on $E$ with coefficients in $\HH$} is a function $\varphi: E^n \longrightarrow \HH$ such that:
\begin{enumerate}[(a)]
\item $\varphi$ is not identically zero,
\item $\varphi$ is \emph{alternating}, \emph{i.e.},
\begin{align*}
\varphi(x_1,\dots,x_i,\dots,x_j,\dots,x_n) &= -\varphi(x_1,\dots,x_j,\dots,x_i,\dots,x_n) \text{ and}\\
\varphi(x_1,\dots,x_n) &= 0 \quad \text{if } x_i=x_j \text{ for some }i,j\, .
\end{align*}
\item (Grassmann-Plücker relations) For any two subsets $\{x_1,\ldots,x_{n+1}\}, \{y_1,\ldots,y_{n-1}\} \subseteq E$, 
\[
0 \in \bigoplus_{k=1}^{n+1} \varphi(x_1,\ldots,\hat{x}_k,\ldots,x_{n+1}) \cdot \varphi(x_k,y_1,\ldots,y_{n-1})\, .
\]
\end{enumerate}
Two Grassmann--Plücker functions $\varphi_1$ and $\varphi_2$ are \emph{equivalent} if $\varphi_1 = \alpha \cdot \varphi_2$ for some $\alpha \in \HH^{\times}$.
A  \emph{(strong) matroid over a hyperfield} $\HH$  on $E$ of rank $r$ is an equivalence class of a Grassmann--Plücker function of rank $r$ on $E$.
\end{definition}

\begin{remark}
Baker and Bowler define strong and weak matroids over hyperfields, which coincide for doubly distributive hyperfields by \cite[Theorem 5.4]{bakerbowlerpartial}. Since all the hyperfields we consider (see Example \ref{exa:hyperfields}) are  doubly distributive hyperfields, we will simply refer to matroids over hyperfields.
\end{remark}

For the hyperfields of Example \ref{exa:hyperfields}, we obtain the following (classical) notions:
\begin{enumerate}[(a)]
\item A matroid over a field $K$ is the same thing as a linear subspace of $K^E$ of rank $n$. This is the classical representation of a subspace via its Plücker coordinates that satisfy the Grassmann--Plücker relations.
\item When $\HH = \mathbb{K}$ is the Krasner hyperfield, a matroid over $\mathbb{K}$ is the same as a usual matroid since the Grassmann--Plücker relations are equivalent to the basis exchange axiom for matroids.
\item A matroid over the sign hyperfield $\mathbb{S}$ is the same as an \emph{oriented} matroid, where a choice of a Grassmann--Plücker $\varphi$ function is called a \emph{chirotope} (note that $\varphi,-\varphi$ are the only chirotopes). This choice gives us then a notion of an \emph{oriented basis}, where $(b_1,\dots,b_n)$ is a \emph{positively oriented basis}, if $\varphi(b_1,\dots,b_n)=1$.
\item A matroid over the tropical hyperfield $\T$ is the same as a \emph{valuated} matroid in the sense of Dress--Wenzel \cite{dress1992valuated}. The Grassmann--Plücker function is usually called a tropical Plücker vector.
\item A matroid over the real tropical hyperfield $\R\T$ is an \emph{oriented valuated} matroid. This particular case holds significant interest within this paper.
\end{enumerate}

Matroids over hyperfields admit a useful pushforward operation (\emph{cf.}\ \cite[\S 4.2]{bakerbowler2016matroids}): Given a Grassmann–Plücker function $\varphi: E^n \xrightarrow{} \HH_1$ and a homomorphism of hyperfields $f: \HH_1 \xrightarrow{} \HH_2,$ the pushforward $f_* \varphi: E^n \xrightarrow{} \HH_2$ is defined by the formula
\begin{align*}
f_*\varphi(e_1,\ldots,e_n) = f(\varphi(e_1,\ldots,e_n)).    
\end{align*}
This is again a Grassmann--Plücker function. If $\mathcal{M}$ is a matroid over $\HH_1$ given by $\varphi$, the $\emph{pushforward}$ $f_*(\mathcal{M})$ is defined by $f_*\varphi$ and is a matroid over $\HH_2$. 

\begin{definition}
Let $f: \HH_1 \xrightarrow{} \HH_2$ be a homomorphism of hyperfields, and let $\mathcal{M}_2$ be a matroid on $E$ with coefficients in $\HH_2$. We say that $\mathcal{M}_2$ is \emph{realizable} with respect to $f$ if there is a matroid $\mathcal{M}_1$ over $\HH_1$ such that $f_*(\mathcal{M}_1) = \mathcal{M}_2.$
\end{definition}

\begin{example}
\label{exa:pushforwardsofmatroids}
\ 
\begin{enumerate}[(a)]
\item Let $\HH$ be a hyperfield and let $\omega: \HH \longrightarrow \mathbb{K}$ be the canonical hyperfield homomorphism. If $\mathcal{M}$ is an $\HH$-matroid, the pushforward $\omega_*(\mathcal{M})$ is called the \emph{underlying matroid}.
\item Let $K$ be an ordered field and let $\sgn \colon K \xrightarrow{} \mathbb{S}$ be the sign map. If $W \subseteq K^m$ is a linear subspace (considered in the natural way as a $K$-matroid), the pushforward $\sgn_*(W)$ coincides with the oriented matroid which one traditionally associates to $W$.
\item  Let $K$ be a field with a non-Archimedean absolute value $\lvert\cdot\rvert_K: K \longrightarrow \T$ (considered as a hyperfield homomorphism) and let $W \subseteq K^n$ be a linear subspace. Then the pushforward $\lvert \cdot \rvert_*(W)$ is the valuated matroid associated to $W$. 
\item Let $K$ be a real closed field with a compatible absolute value $\lvert \cdot \rvert_K$. If $W \subseteq K^m$ is a linear subspace, the pushforward of $W$ under the signed absolute value map $K \longrightarrow \R\T, x \mapsto \sgn(x) \cdot |x|_K$ is the oriented valuated matroid associated to $W$. Explicitly, let $W$ be the row-space of the $n \times m$-matrix $[v_1,\dots,v_m].$ Let $E = \{v_1,\ldots,v_m\}$ be the set of column vectors of the matrix which form a spanning set of vectors of $K^n$. The associated Grassmann--Plücker function of the $\R\T$-matroid associated to $W$ is given by
\begin{equation*}\begin{split}
E^n &\longrightarrow \R, \\
(v_{a_1},\ldots,v_{a_n}) &\longmapsto \sgn(\det[v_{a_1},\dots,v_{a_n}]) \cdot \lvert\det[v_{a_1},\dots,v_{a_n}]\rvert_K  \ .
\end{split}\end{equation*}

\item Consider an $\R\T$-matroid (\emph{i.e.}, an oriented valuated matroid) $\mathcal{M}$. The pushforwards of $\mathcal{M}$ under the natural maps $\R\T \longrightarrow \T$ and $\R\T \longrightarrow \mathbb{S}$ yield the underlying valuated matroid and the underlying oriented matroid, respectively.
\end{enumerate}
\end{example}

\subsection{Oriented Valuated Matroids}
Since oriented valuated matroids are not as well studied as for example oriented or valuated matroids, we describe them now in more detail. The following explicit description is taken from \cite[\S 1.5]{giansiracusa2023e_}:

Let $\mathcal{M}$ be an oriented valuated matroid given by the Grassmann--Plücker function $\varphi: E^n \longrightarrow  \R.$ Condition $(3)$ in Definition \ref{def:matroidsoverhyperfieldsGP} explicitly means: For each $\{x_1,\ldots,x_{n+1}\} \in E^{n+1}$ and $\{y_1,\ldots,y_{n-1}\} \in E^{n-1}$, either the numbers
$$ \{(-1)^k \varphi(x_1,\ldots, \hat{x}_k,\ldots,x_{n+1}) \cdot \varphi(x_k,y_1,\ldots,y_{n-1}) \}_{k=1,\dots,n+1}$$ are all zero, or the maximum modulus occurs with both signs. The group $\R\T^* = \R^*$ acts on the set of such $\varphi$ by multiplication, and an oriented valuated matroid is an orbit.

As in Example \ref{exa:pushforwardsofmatroids} (e) the pushforwards of an $\R\T$-matroid $\mathcal{M}$ yield an underlying valuated matroid given by a tropical Plücker vector $v=|\varphi|: E^{n} \longrightarrow \T = \R_{\geq 0}$ and an underlying oriented matroid given by the chirotope (\emph{i.e.}, Grassmann--Plücker function) $\sgn \circ \varphi: E^n \longrightarrow \mathbb{S} = \{0,+1,-1\}$.
An equivalent definition of an oriented valuated matroid is also given by a \emph{compatible} pair of a tropical Plücker vector and a chirotope, see \cite[Definition 1.5.1]{giansiracusa2023e_}.

\begin{remark}
The more common, but equivalent, notion of a valuated matroid arises by taking $v = -\log|\varphi|$, where one takes hyperfield addition using $\min$ and multiplication given by usual addition.
\end{remark}

There is a cryptomorphic definition of $\HH$-matroids via $\HH$-circuits. For $\HH \in \{\mathbb{K}, \mathbb{S}, \mathbb{T}\}$ these give the usual axioms for circuits, signed circuits and valuated circuits. The general cryptomorphic axiomatization of $\HH$-circuits can be found in \cite[Definition 3.7]{bakerbowler2016matroids}. For $\HH = \R\T$, we will now state the axioms for the set of $\R\T$-circuits. For any $C,C' \in {\R\T}^E$ we define the \emph{support} of $C$ as $\Supp{C}:= \{e \in E \mid C_e \neq 0\}$, and we define the \emph{composition}
\begin{align*}
(C \circ C')_e = \begin{cases}
C_e \quad \text{if } |C_e| \geq |C'_e|,\\
C'_e \quad \text{otherwise}. \\
\end{cases}
\end{align*}

\begin{proposition} [{\cite[Theorem 45]{bowler2019perfectmatroidsoverhyperfields}}]\label{prop:circuit_description}
Let $E$ be a finite set. An $\R\T$-matroid on $E$ is equivalent to a subset $\mathcal{C} \subseteq {\R\T}^E$ that satisfies the following circuit axioms:
\begin{itemize}
\item[\textbf{(C0)}] $0 \notin \calC$,
\item[\textbf{(C1)}] if $C \in \calC$ and $\alpha \in {\R\T}^{*} = \R^*$, then $\alpha \cdot C \in \calC$,
\item[\textbf{(C2)}] if $C,C' \in \calC$ and $\Supp{C} \subseteq \Supp{C'}$, then there exists an $\alpha \in (\R\T)^* = \R^*$ such that $C' = \alpha \cdot C$,
\item[\textbf{(C3)}] for any $C,C' \in \calC, e,f \in E$ such that $C_e = -C'_e \neq 0$ and $|C_f| > |C'_f|$, there exists a $C'' \in \calC$ such that $C''_e = 0,\ C''_f = C_f$, and $|C''_g| < |C_g \circ C'_g|$ or $C''_g \in C_g \oplus C'_g$ for all $g \in E$.
\end{itemize}
\end{proposition}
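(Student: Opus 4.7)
The plan is to prove a cryptomorphism between the Grassmann--Plücker description of $\R\T$-matroids (Definition \ref{def:matroidsoverhyperfieldsGP}) and the circuit axioms (C0)--(C3). Since $\R\T$ is doubly distributive (as noted after Definition \ref{def:matroidsoverhyperfieldsGP}), this can be set up as an instance of a general hyperfield cryptomorphism, but I would give the construction explicitly in order to keep the signed-and-valued structure transparent and to make both axioms agree on the nose.

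\textbf{From Grassmann--Plücker to circuits.} Given a Grassmann--Plücker function $\varphi\colon E^n\to\R\T$ and an ordered $(n{+}1)$-tuple $\mathbf{x}=(x_1,\ldots,x_{n+1})$ of distinct elements of $E$, define $C^{\mathbf{x}}\in\R\T^E$ by
$$C^{\mathbf{x}}_{x_k}=(-1)^k\varphi(x_1,\ldots,\hat x_k,\ldots,x_{n+1}),\qquad C^{\mathbf{x}}_e=0\text{ for }e\notin\{x_1,\ldots,x_{n+1}\},$$
and let $\mathcal{C}$ be the $\R\T^*$-closure of the set of non-zero such vectors. Axioms (C0) and (C1) are immediate. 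Axiom (C2) follows because the underlying matroid has a unique circuit on each minimally dependent set, so two $\mathcal{C}$-elements with nested supports must differ by a scalar. The main content is (C3), which I would derive from the Grassmann--Plücker relation by choosing an auxiliary $(n{-}1)$-tuple $\mathbf{y}$ encoding the complementary data: unfolding
$$0\in\bigoplus_{k=1}^{n+1} \varphi(x_1,\ldots,\hat x_k,\ldots,x_{n+1})\cdot\varphi(x_k,y_1,\ldots,y_{n-1})$$
produces a vector supported on $\Supp(C)\cup\Supp(C')\setminus\{e\}$ that realizes the elimination. The defining property of addition in $\R\T$---that the maximum $|a_i|$ in a sum containing $0$ is attained with both signs---translates directly into the bound $|C''_g|<|C_g\circ C'_g|$ together with the boundary clause $C''_g\in C_g\oplus C'_g$.

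\textbf{From circuits to Grassmann--Plücker.} Starting from a family $\mathcal{C}$ satisfying (C0)--(C3), the supports first define an ordinary matroid on $E$ by passing to the $\mathbb{K}$-shadow of the axioms. Fix a basis $B_0$ and normalize $\varphi(B_0)=1$. For any other basis $B$ one reaches $B$ from $B_0$ by a sequence of single-element exchanges, each dictated by a unique (up to scalar) circuit whose $\R\T$-entries determine the ratio between consecutive values of $\varphi$. Well-definedness of $\varphi$, i.e.\ independence from the chosen exchange sequence, is then proved by a cocycle argument on the basis exchange graph; loops are closed by applying (C3), which provides exactly the compatibility needed to identify two alternative routes. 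After verifying the alternating property, the Grassmann--Plücker relation for general $\mathbf{x},\mathbf{y}$ follows by iterating (C3) and exploiting double distributivity.

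\textbf{Main obstacle.} The hard part is the reverse direction, specifically the well-definedness of $\varphi$. Because $\R\T$-addition is multivalued, intermediate expressions produce containments rather than equalities, and composing several basis exchanges requires careful bookkeeping so that the containments do not drift. Here double distributivity of $\R\T$ is essential: it guarantees that nested sums of products expand as expected, so that the compatibility supplied by (C3) propagates faithfully through longer chains of exchanges. A pragmatic shortcut is to invoke Bowler--Pendavingh's general cryptomorphism for matroids over perfect (equivalently doubly distributive) hyperfields and merely verify that $\R\T$ satisfies their hypotheses, which reduces to a direct check on the definition of $\oplus$ in $\R\T$.
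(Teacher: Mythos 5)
The paper does not prove this proposition at all: it is stated as a citation to Bowler--Pendavingh (Theorem~45 of their work on perfect matroids over hyperfields), and the body of your ``pragmatic shortcut'' paragraph --- invoke the general cryptomorphism for perfect hyperfields and observe that $\R\T$ qualifies --- is exactly what the paper is doing. Your preceding sketch of how one would prove the cryptomorphism from scratch is broadly consistent with the strategy in the cited source, but it contains a few points that would need tightening if you actually carried it out. In the forward direction, (C2) is not ``immediate'' from the statement that the underlying matroid has a unique circuit on each minimal dependent set; having the same support only pins down the circuit up to a scalar \emph{given} the Grassmann--Pl\"ucker relations, and one must verify that the various $C^{\mathbf x}$ supported on a fixed circuit really do differ by an element of $\R\T^*$ (this uses the three-term GP relation, not just the $\mathbb{K}$-shadow). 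In the reverse direction the ``cocycle argument on the basis exchange graph'' is the right idea but is genuinely delicate over a hyperfield because, as you note, sums are multivalued; the issue is not just bookkeeping but that one must prove the compatibility of exchanges using the \emph{strong} circuit elimination that (C3) encodes, and then verify the full $(n{+}1)\times(n{-}1)$ GP relation, which is several lemmas in Baker--Bowler. Finally, ``perfect (equivalently doubly distributive)'' overstates what is known: doubly distributive implies perfect (and this is how one checks $\R\T$ is perfect), but the converse is not established in the literature. None of these issues affects the correctness of the overall plan, and since the paper simply cites, your shortcut is the appropriate resolution.
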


We call $\R\T$-circuits \emph{signed valuated circuits}. 
\begin{lemma}[{\cite[Lemma 4.5] {bakerbowler2016matroids}}]\label{lem:pushforwardcircuits}
If $f: \HH_1 \xrightarrow{} \HH_2$ is a homomorphism of hyperfields and $\mathcal{M}$ is an $\HH_1$-matroid on $E$ with set of circuits $\mathcal{C}$, then the set of circuits of the pushforward $f_*(\mathcal{M})$ is given by 
$$ \{c f_*(C) \mid c \in \HH_2^{\times}, C \in \mathcal{C} \}\, .$$
\end{lemma}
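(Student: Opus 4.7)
The approach is to reduce the statement to the Grassmann--Plücker side, where pushforward is simply post-composition with $f$. Recall the standard explicit recipe recovering circuits from a Grassmann--Plücker function: if $\mathcal{M}$ is represented by $\varphi\colon E^{n} \to \HH_1$, then for every ordered $(n+1)$-tuple $(x_1,\ldots,x_{n+1})$ of distinct elements of $E$ the vector $C^{(x_1,\ldots,x_{n+1})} \in \HH_1^E$ defined by
\[
C^{(x_1,\ldots,x_{n+1})}_{x_k} = (-1)^k\, \varphi(x_1,\ldots,\widehat{x}_k,\ldots,x_{n+1}), \qquad k=1,\ldots,n+1,
\]
and zero off $\{x_1,\ldots,x_{n+1}\}$ is a vector of $\mathcal{M}$; moreover every circuit of $\mathcal{M}$ is an $\HH_1^{\times}$-scalar of such a $C^{(x_1,\ldots,x_{n+1})}$ whose support is inclusion-minimal. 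I would either cite this directly or rederive it from the Grassmann--Plücker relations.

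The core step is then a one-line computation. Since $f_*\mathcal{M}$ is by definition represented by $f_*\varphi = f\circ \varphi$, applying the same recipe to $f_*\varphi$ yields
\[
(f_*\varphi)^{(x_1,\ldots,x_{n+1})}_{x_k} = (-1)^k\, f\!\left(\varphi(x_1,\ldots,\widehat{x}_k,\ldots,x_{n+1})\right) = f\!\left(C^{(x_1,\ldots,x_{n+1})}_{x_k}\right),
\]
i.e.\ the vectors arising from the recipe for $f_*\mathcal{M}$ are precisely the entrywise images $f_*\!\left(C^{(x_1,\ldots,x_{n+1})}\right)$. Passing to $\HH_2^{\times}$-orbits on both sides then produces the asserted set equality, modulo the nullity discussion below.

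The main obstacle I foresee is bookkeeping around zeros and support-minimality: a circuit $C$ of $\mathcal{M}$ can satisfy $f_*(C)=0$ whenever $f$ annihilates every relevant Plücker coordinate, and several $\HH_1^{\times}$-orbits of circuits may collapse into a single $\HH_2^{\times}$-orbit after pushforward. The first issue is absorbed by (C0), which implicitly removes the zero vector from the right-hand side of the claimed equality. For the second, one must verify that if $C$ has inclusion-minimal support in $\mathcal{M}$ and $f_*(C)\ne 0$, then $f_*(C)$ still has inclusion-minimal support among vectors of $f_*\mathcal{M}$, hence is a circuit. This follows by lifting any putative smaller-support vector of $f_*\mathcal{M}$ back through the Grassmann--Plücker description to a vector of $\mathcal{M}$, contradicting minimality of $\Supp(C)$. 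With this verification, both inclusions of the claimed equality are immediate from the explicit circuit recipe.
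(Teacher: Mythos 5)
The paper does not prove this lemma; it simply cites \cite[Lemma~4.5]{bakerbowler2016matroids}, so there is no in-paper proof to compare against. Your Grassmann--Pl\"ucker approach --- recovering circuits from the GP function via the alternating $(n{+}1)$-tuple recipe and observing that $f_*$ intertwines with that recipe --- is sound and is essentially how one would prove this from the definitions.

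The last paragraph, however, manufactures obstacles that do not exist and then resolves them with an argument that does not quite work. Both of your worries evaporate once you note that a hyperfield homomorphism has trivial kernel: for any $x\in\HH_1^\times$ one has $f(x)f(x^{-1})=f(1)=1$, so $f(x)\ne 0$. Consequently $f\circ\varphi$ vanishes exactly where $\varphi$ does, so the underlying matroids of $\mathcal{M}$ and $f_*(\mathcal{M})$ coincide, every $f_*(C)$ has the \emph{same} support as $C$, and in particular $f_*(C)\ne 0$ for any circuit $C$. Support-minimality is therefore automatic --- circuit supports in either $\HH$-matroid are precisely the circuits of the common underlying matroid --- and no ``lifting'' is needed. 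As written, the lift argument is actually suspect: $f$ is generally not injective, so one cannot pull a vector of $f_*\mathcal{M}$ ``back through'' $f$ to a vector of $\mathcal{M}$; what you can legitimately do is compare supports in the shared underlying matroid, which is the cleaner and correct route. Likewise the possible collapsing of $\HH_1^\times$-orbits to a single $\HH_2^\times$-orbit is a non-issue for the claimed set equality (and in fact does not occur, by axiom \textbf{(C2)} together with preservation of supports).
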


In particular, the set of supports of $\mathcal{C}$ is the set of circuits of the underlying matroid.

\begin{example} \label{example:realizableorvalmatroid}
Let $K$ be an ordered field with a compatible absolute value $\lvert \cdot \rvert_K$. An $\R\T$-matroid $\mathcal{M}$ is realizable over $K$ if there is a $K$-matroid, \emph{i.e.},\ a linear space $W \subseteq K^E$, such that the pushforward of $W$ under the signed absolute value map equals $\mathcal{M}$. As in Example \ref{exa:pushforwardsofmatroids} (d) let $E = \{v_1,\ldots,v_m\}$ be a spanning set of vectors of $K^n$ and $W$ the row-space of the $n \times m$-matrix $[v_1,\dots,v_m]$. The set of circuits of $W$ is given by the vectors with minimal support in $E$. By Lemma \ref{lem:pushforwardcircuits} the set of signed valuated circuits, (\emph{i.e.}\ $\R\T$-circuits) of $\mathcal{M}$, is given by
\begin{align*}
\mathcal{C} &= \{\big (\sgn(\lambda_1)|\lambda_1|_K,\ldots,\sgn(\lambda_m)|\lambda_m|_K \big) \in \R^m \mid \\
&\quad \sum_i\lambda_i v_i= 0 \text{ is a minimal linear dependence} \}\, .
\end{align*}
This generalizes signed circuits of an oriented matroid by considering the trivial valuation on $K$.
\end{example}

\subsection{Real Tropical Linear Spaces} \label{section:realtropicallinearspaces}

Analogous to the definition of the tropical linear space of a valuated matroid \cite{speyer_tropical_linear_spaces}, the real tropical linear space $\Trop_r(\calM)$ associated to an oriented valuated matroid $\calM$ is defined as follows: 

\begin{definition}[{\cite[1.2.7]{jurgens2018real}}] \label{def:realtropicallinearspace}
Let $\mathcal{M}$ be an oriented valuated matroid on the ground set $E$. For a signed valuated circuit $C \in \calC$, we define the \emph{real tropical hyperplane}
\begin{align*}
\Trop_r(\mathcal{M}_C) &= \{(y_0:\ldots:y_m) \in {\R\P}^m \mid 0 \in \bigoplus_{e \in \Supp{C}} y_e \cdot C_e \} \\
&= \{(y_0:\ldots:y_m) \in {\R\P}^m \mid \text{ there exist indices } i \neq j  \text{ such that} \\
&\qquad \max_{e \in \Supp C}(|y_e|\cdot |C_e|) \text{ is attained at } i,j \text{ and } y_i \cdot C_i = - y_j \cdot C_j
\} \, .
\end{align*}
The \emph{real tropical linear space} associated to $\mathcal{M}$ is defined as
\begin{align*}
\Trop_r(\mathcal{M}) \coloneqq \bigcap_{C \in \calC} \Trop_r(\mathcal{M}_C).
\end{align*}
\end{definition}

\begin{remark}
This definition of the real tropical linear space is exactly the zero set of linear polynomials over the real tropical hyperfield $\R\T$.
\end{remark}

\begin{remark}
If $K$ is trivially valued, $\Trop_r(\mathcal{M})$ equals the \emph{real Bergman fan} of \cite{celaya-diss}, see Section \ref{section:realbergmanfans}. In the general case, the real tropical linear space of a matroid over the real tropical hyperfield is the so-called \emph{TC-convex hull} of the $\R\T$-circuits of $\mathcal{M}$ \cite{loho2022signedtropicalhalfspacesandconvexity}. The relation to signed tropical convexity is elaborated further in Remark \ref{rem:TropicalConvexity}.
\end{remark} 

The tropicalization of a linear space over a non-Archimedean field is determined by its associated valuated matroid {\cite[Proposition 4.2]{speyer_tropical_linear_spaces}}. If $K$ is a real closed field with a compatible absolute value $\lvert \cdot \rvert_K$, then similarly the real tropicalization of a linear space over $K$ is determined by the associated oriented valuated matroid.

\begin{proposition}[{\cite[Theorem 3.14]{tabera2015real}}, {\cite[Theorem 1.2.11]{jurgens2018real}}] \label{prop:tableratropicalbasis}
Let $K$ be a real closed field with a compatible absolute value $\lvert \cdot \rvert_K$. Let $\iota=(f_0:\dots:f_m): \P^n \xhookrightarrow{} \P^m$ be a linear embedding, and $\calM_{\iota}$ be the associated realizable oriented valuated matroid on $\{f_0,\dots,f_m\}\subset (K^{n+1})^*$ (\emph{cf.}\ Example \ref{example:realizableorvalmatroid}). Then
\[\Trop_r(\P^n,\iota)=\Trop_r(\calM_{\iota})\, .\]
\end{proposition}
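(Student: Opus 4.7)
The plan is to prove the two inclusions separately, exploiting the key compatibility axiom of the valuation with the ordering — namely, $|a|_K > |b|_K$ implies $\sgn(a+b) = \sgn(a)$ — in order to translate field identities in $K$ into hyperfield relations in $\R\T$.

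For $\Trop_r(\P^n,\iota) \subseteq \Trop_r(\calM_\iota)$, I would work on the level of $K$-points and take closures. Given $x\in \P^n(K)$, set $y_e \coloneqq f_e(x)$ for each $e$. By Example \ref{example:realizableorvalmatroid}, any signed valuated circuit $C$ of $\calM_\iota$ arises from a minimal linear dependence $\sum_{e\in\Supp C}\tilde c_e f_e = 0$ with $C_e = \sgn(\tilde c_e)|\tilde c_e|_K$. Evaluating at $x$ yields $\sum_{e\in\Supp C} \tilde c_e y_e = 0$ in $K$. The ultrametric inequality together with the sign-compatibility axiom force the maximum of $|\tilde c_e y_e|_K$ to be attained at two indices of opposite signs — otherwise the dominant terms would share a common sign and could not cancel. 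This is precisely the defining condition of $\Trop_r((\calM_\iota)_C)$, and intersecting over all signed valuated circuits followed by taking closure gives the inclusion.

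For the reverse inclusion $\Trop_r(\calM_\iota)\subseteq \Trop_r(\P^n,\iota)$, I would base-change to a non-trivially valued real closed extension $L/K$ with sufficiently large value group (e.g.\ the real Puiseux series of Example \ref{ex:puiseux_series}). Fix $\zeta\in\Trop_r(\calM_\iota)$ with sign vector $\sigma\coloneqq \sgn(\zeta)$, and consider its image under the absolute-value retraction $|\cdot|_{\R\P^m}\colon \R\P^m \to \T\P^m$. This image lies in $\Trop(|\calM_\iota|) = \Trop(\P^n,\iota_L)$ by Speyer's theorem \cite[Proposition 4.2]{speyer_tropical_linear_spaces}, so one can produce lifts in $\iota(\P^n)(L)$ whose absolute values approximate $|\zeta|$. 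The second step is to correct the signs of these lifts so that they lie in the $\sigma$-orthant; this is possible precisely because $\zeta\in\Trop_r(\calM_\iota)$, which amounts to saying that no signed valuated circuit of $\calM_\iota$ obstructs the sign pattern $\sigma$. Equivalently, the oriented matroid underlying the linear space $\iota(\P^n)(L)\subseteq L^{m+1}$ realizes the covector $\sigma$, so one obtains an $L$-point in the prescribed orthant. Combining the valuated approximation with a density argument within the $\sigma$-orthant produces a sequence $z_n\in \iota(\P^n)(L)$ with $\trop_r(z_n)\to \zeta$, and closedness of the real tropicalization finishes the inclusion.

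The main obstacle is the orthantwise sign-correction step: the absolute-value lifting is classical combinatorics of valuated matroids, but adjusting a lift to realize the prescribed sign pattern requires the oriented-matroid / linear-space dictionary identifying realizable covectors of $V\subseteq L^{m+1}$ with actual points in the corresponding orthant. This hinges on the realizability of $\calM_\iota$ (ensured because it comes from $\iota$) and on the cryptomorphism between the Grassmann--Plücker and circuit axiomatizations of $\R\T$-matroids (Proposition \ref{prop:circuit_description}), which guarantees that the signed valuated circuit obstructions used to cut out $\Trop_r(\calM_\iota)$ faithfully record the orientation data available in the linear space.
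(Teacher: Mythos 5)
The paper does not prove this proposition; it simply cites it to \cite[Theorem 3.14]{tabera2015real} and \cite[Theorem 1.2.11]{jurgens2018real}. So there is no internal proof to compare your attempt against, and the relevant question is whether your sketch is sound.

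Your forward inclusion $\Trop_r(\P^n,\iota)\subseteq \Trop_r(\calM_\iota)$ is correct and is the same elementary computation the paper carries out (in a slightly different guise) inside the proof of Proposition \ref{prop:tropIsProj}: evaluate each minimal linear dependence at a $K$-point, use the ultrametric triangle inequality, and observe that the leading absolute values must occur with both signs or the dependence could not vanish. Taking closures preserves the inclusion since $\Trop_r(\calM_\iota)$ is closed.

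The converse inclusion is where you have a genuine gap, and it is the entire content of the cited results. Your plan is to first lift $|\zeta|$ to a point $z\in\iota(\P^n)(L)$ by classical Speyer, and then ``correct the signs'' of $z$ to land in the orthant $\sigma=\sgn(\zeta)$. But these two steps do not compose: a lift of $|\zeta|$ comes with its own sign pattern $\sigma'=\sgn(\trop_r(z))$, and there is no reason a priori that one can replace $z$ by another point with the same coordinatewise absolute values but prescribed signs $\sigma$. Knowing separately that (i) $\sigma$ is a covector of the underlying oriented matroid, so is realized by \emph{some} $L$-point, and (ii) $|\zeta|$ lies in the unsigned tropical linear space, so is realized by \emph{some} $L$-point, does not produce a single point realizing both constraints simultaneously. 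The actual proofs of Tabera and Jurgens avoid this split: they construct the lift in the correct orthant in one step, typically by working cell-by-cell on a polyhedral structure of $\Trop_r(\calM_\iota)$ indexed by (valuated) covector data, and exploiting the Cramer / Pl\"ucker description of the linear space to produce a parametrized family of points whose signed tropicalization sweeps out the cell. In other words, the ``real Kapranov'' content of the proposition is precisely the interaction between the absolute-value constraint and the sign constraint, and your sketch treats them as independent. To fix this you would need an argument showing that within the fiber of $|\cdot|_{\R\P^m}\circ\trop_r$ over a point of $\Trop(\P^n,\iota)$, every sign pattern allowed by $\Trop_r(\calM_\iota)$ actually occurs — which is where the real work lies.
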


In other words, the real tropicalization of a linear space only depends on the associated oriented valuated matroid.

\section{Real Analytification and Tropicalization} \label{section:realanalytificationandtrop}

Throughout this section, we fix a real closed, non-Archimedean valued field $K$ with absolute value $\lvert \cdot \rvert_K$ that is compatible with the order on $K$. We recall the real analytification of a variety over $K$ which was introduced in \cite{JSY22}. We then show that, similarly to the  Berkovich analytification, the real analytification of $\P^n$ can be described as a quotient of $\A^{n+1,\an}_r\setminus \{0\}$, and hence can be described in terms of signed multiplicative seminorms on a polynomial ring. This gives a real tropicalization map from $\P_r^{n,\an}$ to $\R\P^n$ extending the real tropicalization map from affine varieties to closed subvarieties of $\P^n$.

\subsection{The Real Analytification \texorpdfstring{$\bm{X_r^{\an}}$}{Xran}}

In \cite{JSY22} the authors define a \emph{real analytification} of a $K$-variety $X$. By a $K$-variety $X$ we always mean a reduced, irreducible, separated scheme of finite type. For a point $p$ in the scheme $X$, let $K(p)$ denote its residue field.

\begin{definition}
The \emph{real analytification} of $X$ is the set $X_r^{\an}$ consisting of all triples $x = (p_x,\lvert \cdot \rvert_x,<_x)$, where $p_x \in X, \lvert \cdot \rvert_x$ is an absolute value on $K(p_x)$ extending $\lvert \cdot \rvert_K$, and $<_x$ is an order on $K(p_x)$ compatible with $\lvert \cdot \rvert_x$. We equip $X_r^{\an}$ with the coarsest topology such that the support map \begin{align*}
\supp: X_r^{\an} &\longrightarrow X, \\ 
x &\longmapsto  p_x
\end{align*}
is continuous and the map
\begin{align*}
\supp^{-1}(U) &\longrightarrow \R, \\
x &\longmapsto \sgn_x(f) \cdot |f|_x 
\end{align*}
is continuous for every open $U \subseteq X$ and every regular function $f$ on $U$.
\end{definition}

Many properties that hold for the Berkovich analytification also hold for the real analytification. For example:
\begin{itemize}
\item For any morphism $f:X\to Y$ of varieties over $K$, there is a natural induced continuous map $f_r^{\an}: X_r^{\an}\to Y_r^{\an}$ of the corresponding real analytifications. This assignment is functorial.
\item The space $X^{\an}_r$ is a connected Hausdorff space and if $X$ is proper over $K$, it is compact \cite[Proposition 3.6, Corollary 3.10]{JSY22}. 
\end{itemize}

There is a natural map $X^{\an}_r\to X^{\an}$ which forgets the last entry. This map is always continuous and proper, but in general neither injective nor surjective \cite[Example 3.12]{JSY22}. Just like for the classical Berkovich analytification, for an affine $K$-variety $X = \Spec(A)$, the real analytification $X_r^{\an}$ has a
description as the space of signed multiplicative seminorms on the $K$-algebra $A$.

\begin{definition}
A \emph{signed multiplicative seminorm} on $A$ is a map $\sinorm{\cdot}: A \to \R$ such that:
\begin{enumerate}[(i)]
\item $|a|^{\sgn} = \sgn(a) \cdot |a|_K$ if $a \in K$,
\item  $|f \cdot g|^{\sgn} = |f|^{\sgn} \cdot |g|^{\sgn}$ if $f,g \in A,$
\item $\min(|f|^{\sgn},|g|^{\sgn})\leq |f+g|^{\sgn} \leq  \max(|f|^{\sgn},|g|^{\sgn})$ if $f,g \in A.$
\end{enumerate}
\end{definition}

The space of signed multiplicative seminorms is endowed with the coarsest topology that makes the natural evaluation maps $\ev_f: \sinorm{\cdot} \mapsto |f|^{\sgn}$ for all $f \in A$ continuous. By \cite[Proposition 3.4]{JSY22} for $X = \Spec(A)$, we have that $X_r^{\an}$ is the space of signed multiplicative seminorms on $A$. The map to the Berkovich analytification is given by $\sinorm{\cdot} \mapsto |\sinorm{\cdot}|$.

\begin{remark}
As explained in \cite{JSY22}, one can view $X_r^{\an} = X(\R\T)$, \emph{i.e.},\ as the $\R\T$-points of $X$. Namely, if $X = \Spec(A)$ is an affine $K$-scheme, then $X_r^{\an} = \Hom_K(A,\R\T)$. The latter set is the set of all hyperfield homomorphisms $A \xrightarrow{} \R\T$ that factor the canonical hyperfield homomorphism $K \xrightarrow{} \R\T, x \mapsto \sgn(x) \cdot |x|_K$ from Example \ref{exa:hyperfieldhom} (e). This construction glues to varieties over $K$. In fact, from this hyperfield point of view, \cite{jun2021geometry} showed that $X = X(\mathbb{K})$ and $X^{\an} = X(\T).$
\end{remark}

In particular, for the affine space $\A^n = \Spec K[t_1,\ldots,t_n]$, the real Berkovich analytification $(\A^n)^{\an}_r$ is given as the set of signed multiplicative seminorms on $K[t_1,\dots,t_n]$. Similar to the Berkovich Proj construction, there exists a corresponding construction for the real analytification, hence we can describe $\P_r^{n,\an}$ in terms of signed multiplicative seminorms. Let $\sinorm{\cdot}_1$ and $\sinorm{\cdot}_2$ be points in $\A^{n+1,\an}_r \setminus \{0\}$, \emph{i.e.},\ non-zero signed multiplicative seminorms on $K[t_0,\ldots,t_n]$ extending the absolute value on $K$. We call $\lvert \cdot \rvert_1^{\sgn}$ and $\lvert \cdot \rvert_2^{\sgn}$ \emph{homothetic}, if there exists a constant $0 \neq c  \in \R$ such that for every homogeneous polynomial $f$ of degree $d$, we have $|f|^{\sgn}_1 = c^d |f|^{\sgn}_2$. The following description of $\P^{n,\an}_r$ is analogous to the classical case.
\begin{proposition}
The real analytification $\P^{n,\an}_r$ is the quotient of $\A^{n+1,\an}_r \setminus \{0\}$ by homothety.
\end{proposition}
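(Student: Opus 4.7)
The plan is to adapt the standard argument showing that $\P^{n,\an}$ is the homothety quotient of $\A^{n+1,\an}\setminus\{0\}$ to the signed setting. The main steps are: (i) identify $\A^{n+1,\an}_r \setminus \{0\}$ with the space of signed multiplicative seminorms $|\cdot|^{\sgn}$ on $K[t_0,\ldots,t_n]$ for which not all $|t_i|^{\sgn}$ vanish (indeed, any seminorm with $|t_i|^{\sgn} = 0$ for all $i$ is determined by its values on constants alone and is precisely the point $0$); (ii) construct a continuous surjection $\Phi \colon \A^{n+1,\an}_r \setminus \{0\} \to \P^{n,\an}_r$; (iii) identify its fibers with homothety classes; (iv) upgrade the resulting continuous bijection from the quotient to a homeomorphism.

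To construct $\Phi$, cover $\P^n$ by the standard affine opens $U_i = \Spec K[t_0/t_i,\ldots,t_n/t_i]$. Given $|\cdot|^{\sgn} \in \A^{n+1,\an}_r \setminus \{0\}$ and any index $i$ with $|t_i|^{\sgn} \neq 0$, define a signed multiplicative seminorm on the coordinate ring of $U_i$ by $|f/t_i^d|^{\sgn}_{U_i} \coloneqq |f|^{\sgn} / |t_i^d|^{\sgn}$ for each homogeneous $f$ of degree $d$. Well-definedness and the three seminorm axioms follow directly from multiplicativity and the ultrametric inequality for $|\cdot|^{\sgn}$; on the overlap $U_i \cap U_j$ the two local seminorms agree, again by multiplicativity, so they glue to a point $\Phi(|\cdot|^{\sgn}) \in \P^{n,\an}_r$. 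Continuity of $\Phi$ is immediate because the topology on $\P^{n,\an}_r$ is generated by the signed evaluation maps on each affine chart, each of which is a ratio of signed evaluations on $K[t_0,\ldots,t_n]$. For surjectivity, every point of $\P^{n,\an}_r$ lies in some $U^{\an}_{i,r}$ and corresponds to a hyperfield homomorphism $\rho \colon K[t_0/t_i,\ldots,t_n/t_i] \to \R\T$; for any $c \in \R^{\ast}$, the assignment $t_i \mapsto c$ and $t_j \mapsto c \cdot \rho(t_j/t_i)$ extends uniquely to a hyperfield homomorphism $K[t_0,\ldots,t_n] \to \R\T$ lifting $\rho$, producing a preimage under $\Phi$.

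The heart of the argument is the fiber identification. Suppose $\Phi(|\cdot|^{\sgn}_1) = \Phi(|\cdot|^{\sgn}_2)$; then for any index $i$ with $|t_i|^{\sgn}_1 \neq 0$ one also has $|t_i|^{\sgn}_2 \neq 0$ (since the common image lies in $U^{\an}_{i,r}$), and the equality of the induced local seminorms yields $|f|^{\sgn}_1/|t_i^d|^{\sgn}_1 = |f|^{\sgn}_2/|t_i^d|^{\sgn}_2$ for every homogeneous $f$ of degree $d$. Taking $f = t_j$ shows that the ratio $c \coloneqq |t_i|^{\sgn}_1/|t_i|^{\sgn}_2$ is independent of $i$, and then the general case produces $|f|^{\sgn}_1 = c^d |f|^{\sgn}_2$, which is exactly the homothety condition. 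The induced continuous bijection from $(\A^{n+1,\an}_r\setminus\{0\})/{\sim}$ to $\P^{n,\an}_r$ is upgraded to a homeomorphism by exhibiting explicit continuous sections over each chart $U^{\an}_{i,r}$, namely the lifts just described with $c=1$. The subtlest point, and the one I expect to require the most care, is the sign behavior: the homothety constant $c \in \R^{\ast}$ is allowed to be negative, so in odd degrees the seminorms flip sign coherently across all generators $t_i$; this matches the identification $(a_0:\ldots:a_n) \sim (-a_0:\ldots:-a_n)$ in $\R\P^n$ and confirms that \emph{signed} homothety, rather than only positive-scalar homothety, is the correct equivalence relation.
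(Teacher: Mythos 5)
Your proposal is correct and follows essentially the same approach as the paper: both arguments run over the standard affine charts $U_i$ of $\P^n$, use the computation $\sinorm{f/t_i^d}_{U_i} = \sinorm{f}/\sinorm{t_i}^d$ for homogeneous $f$ of degree $d$ to pass between signed multiplicative seminorms on $K[t_0,\ldots,t_n]$ and on the chart coordinate rings, and check compatibility on overlaps. The only point of substance to flag is the phrase ``extends \emph{uniquely} to a hyperfield homomorphism'' in your surjectivity step: multiplicative seminorms on a polynomial ring are not determined by their values on the generators (nor even by their values on all homogeneous polynomials — that is precisely why the paper's homothety relation is phrased only in terms of homogeneous $f$), so the lift is not unique; however, since surjectivity only requires exhibiting some lift — which the explicit substitution does provide — this does not create a gap.
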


There is a natural continuous surjective real tropicalization map
\begin{align*}
\trop_r: \A^{n+1,\an}_r &\longrightarrow \R^{n+1},\\ 
\sinorm{\cdot} &\longmapsto (|t_0|^{\sgn},\dots,|t_n|^{\sgn}),
\end{align*}
which induces a real tropicalization map
\begin{align*}
\trop_{r}: \P_r^{n,\an} &\longrightarrow \R\P^n, \\
\big[ \sinorm{\cdot} \big] &\longmapsto (|t_0|^{\sgn}:\dots:|t_n|^{\sgn}).
\end{align*}

Let $\iota: X\hookrightarrow \P^n$ be a closed subvariety. By a projective version of the real analogue of the fundamental theorem \cite[Theorem 6.9]{JSY22}, we have $$\trop_{r}(X_r^{\an}) = \Trop_r(X, \iota)\, .$$

\section{The Signed Goldman--Iwahori Space}\label{section:SignedGIspace}

The signed Goldman--Iwahori space associated to a vector space can be thought of as a linear analogue of the real analytification of $\P^n$. Alternatively, it is a version of the Goldman--Iwahori space in \cite{GoldmanIwahori} that takes signs of the ground field into account. In this section, we will examine its structure and its relation to the real analytification. Further, we describe a real tropicalization map from the signed Goldman--Iwahori space to real projective space. This allows us to define a real tropicalization map from the signed Goldman--Iwahori space to real tropicalizations of linear spaces. From now on, let $K$ be a real closed field with compatible non-Archimedean absolute value $\lvert \cdot \rvert$ and let $V$ be a finite-dimensional vector space over $K$.

\subsection{Signed Seminorms}

\begin{definition}\label{def:signedSeminorm}
A \emph{signed seminorm} on $V$ is a map $\siinorm{\cdot}: V\to \R$ such that:
\begin{enumerate}[(i)]
\item For all $v\in V$ and $\lambda \in K$, we have
$$\siinorm{\lambda v}= \sgn(\lambda)|\lambda| \siinorm{v}\, .$$
\item For all $v,w \in V$ both strong triangle inequalities
$$\min(\siinorm{v},\siinorm{w})\leq \siinorm{v+w} \leq  \max(\siinorm{v},\siinorm{w})$$
hold.
\end{enumerate}
\end{definition}

\begin{remark}
This definition is equivalent to $\siinorm{\cdot} \colon V \to \R\T$ being a $\R\T$-hypervector space homomorphism.
\end{remark}

\begin{lemma}\label{lem:rulesSignedSeminorms}
Let $\siinorm{\cdot}$ be a signed seminorm on $V$ and $v,w\in V$.
\begin{enumerate}[(a)]
\item The absolute value of a signed seminorm is a seminorm (\emph{cf.}\ \cite[Definition 1.1]{BKKUV}).
\item If $\Big| \siinorm{v}\Big|>\Big|\siinorm{w}\Big|$, then $\siinorm{v+w}=\siinorm{v}$.
\item If $\Big|\siinorm{v+w}\Big|<\max\left\{\Big|\siinorm{v}\Big|,\Big|\siinorm{w}\Big|\right\}$, then $\sgn (\siinorm{v})=-\sgn(\siinorm{w})$.
\end{enumerate}
\end{lemma}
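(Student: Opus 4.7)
The three parts are closely linked: (a) is an immediate reduction to the ordinary (non-Archimedean) seminorm axioms, and then (b) and (c) are refinements saying that under inequality of the ``sizes'' or under non-opposing signs the seminorm is essentially additive with the sign behaviour of the usual real tropical hyperaddition recorded in Example \ref{exa:hyperfields} (e).

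For (a), the plan is a direct verification. Homogeneity is immediate from Definition~\ref{def:signedSeminorm} (i), since $\bigl|\siinorm{\lambda v}\bigr|=\bigl|\sgn(\lambda)|\lambda|\siinorm{v}\bigr|=|\lambda|\cdot\bigl|\siinorm{v}\bigr|$. For the strong triangle inequality one notes that if $a\leq x\leq b$ for real numbers, then $|x|\leq\max(|a|,|b|)$; applying this to $\min(\siinorm{v},\siinorm{w})\leq\siinorm{v+w}\leq\max(\siinorm{v},\siinorm{w})$ gives $\bigl|\siinorm{v+w}\bigr|\leq\max(\bigl|\siinorm{v}\bigr|,\bigl|\siinorm{w}\bigr|)$.

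For (b), I first use (a) to obtain $\bigl|\siinorm{v+w}\bigr|=\bigl|\siinorm{v}\bigr|$: the upper bound is the strong triangle inequality of (a), and the lower bound follows by writing $v=(v+w)+(-w)$ and noting $\bigl|\siinorm{-w}\bigr|=\bigl|\siinorm{w}\bigr|<\bigl|\siinorm{v}\bigr|$, so the strong triangle inequality forces $\bigl|\siinorm{v+w}\bigr|\geq\bigl|\siinorm{v}\bigr|$. It then remains to pin down the sign. By replacing $(v,w)$ with $(-v,-w)$ if necessary we may assume $\siinorm{v}>0$, so $\siinorm{v}>\bigl|\siinorm{w}\bigr|$. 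From $\siinorm{v+w}\geq\min(\siinorm{v},\siinorm{w})=\siinorm{w}\geq -\bigl|\siinorm{w}\bigr|>-\siinorm{v}$ together with $\bigl|\siinorm{v+w}\bigr|=\siinorm{v}$ (which forces $\siinorm{v+w}\in\{\pm\siinorm{v}\}$), one concludes $\siinorm{v+w}=\siinorm{v}$.

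For (c), I would argue by contraposition. Assume $\sgn(\siinorm{v})\neq -\sgn(\siinorm{w})$ and show $\bigl|\siinorm{v+w}\bigr|\geq\max\bigl(\bigl|\siinorm{v}\bigr|,\bigl|\siinorm{w}\bigr|\bigr)$. If $\bigl|\siinorm{v}\bigr|\neq\bigl|\siinorm{w}\bigr|$, part~(b) immediately gives $\bigl|\siinorm{v+w}\bigr|=\max$; if one of them is zero, part (b) again applies. The remaining case is $\bigl|\siinorm{v}\bigr|=\bigl|\siinorm{w}\bigr|=M>0$ with matching signs, where (after possibly replacing both vectors by their negatives) $\siinorm{v}=\siinorm{w}=M$, and then Definition~\ref{def:signedSeminorm} (ii) pins $\siinorm{v+w}$ between $\min(M,M)=M$ and $\max(M,M)=M$, giving $\siinorm{v+w}=M$. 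In every case $\bigl|\siinorm{v+w}\bigr|=\max$, contradicting the hypothesis in (c).

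The only subtle point is handling the signs in (b): merely reading off that $\bigl|\siinorm{v+w}\bigr|=\bigl|\siinorm{v}\bigr|$ leaves two candidates $\pm\siinorm{v}$, and excluding the wrong one needs both halves of Definition~\ref{def:signedSeminorm} (ii), not just the strong triangle inequality on absolute values. Beyond that, all three parts reduce to routine manipulations of the defining inequalities.
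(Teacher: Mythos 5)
Your proof is correct and follows essentially the same route as the paper's. The one small divergence is in part (b): after establishing $\bigl|\siinorm{v+w}\bigr|=\bigl|\siinorm{v}\bigr|$, the paper rules out the wrong sign by contradiction, rewriting $w=(v+w)+(-v)$ and applying the upper triangle inequality to derive $\siinorm{w}\leq-\siinorm{v}$; you instead apply the lower triangle inequality directly to $v+w$, getting $\siinorm{v+w}\geq\siinorm{w}>-\siinorm{v}$, which excludes $\siinorm{v+w}=-\siinorm{v}$ without a decomposition. Both are equally short and valid — they exploit the two halves of Definition~\ref{def:signedSeminorm}~(ii) in dual ways. Parts (a) and (c) coincide with the paper's argument up to phrasing (your contrapositive case split for (c) is exactly the paper's ``if the signs were also equal, min and max agree'' observation made explicit).
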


\begin{proposition}\label{prop:compSignedSeminorms}
Let $\siinorm{\cdot}_1,\siinorm{\cdot}_2$ be signed seminorms on a vector space $V$. Then the \emph{composition} 
\begin{align*}
\siinorm{\cdot}_1 \circ \siinorm{\cdot}_2: V & \longrightarrow \R, \\
v &\longmapsto \begin{cases}
\siinorm{v}_1 \quad \text{ if } \Big|\siinorm{v}_1\Big| \geq  \Big|\siinorm{v}_2\Big|\, , \\
\siinorm{v}_2 \quad \text{ else }
\end{cases}
\end{align*}
is again a signed seminorm. 
\end{proposition}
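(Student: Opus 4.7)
The plan is to verify the two axioms of Definition~\ref{def:signedSeminorm} for the composition $\siinorm{\cdot} \coloneqq \siinorm{\cdot}_1 \circ \siinorm{\cdot}_2$, using crucially the key identity
\[
\bigl|\siinorm{v}\bigr| \;=\; \max\bigl(\bigl|\siinorm{v}_1\bigr|,\bigl|\siinorm{v}_2\bigr|\bigr),
\]
which is immediate from the definition of the composition. Since the absolute value of any signed seminorm is a non-Archimedean seminorm by Lemma~\ref{lem:rulesSignedSeminorms}(a), and the pointwise maximum of two such seminorms is again a non-Archimedean seminorm, $|\siinorm{\cdot}|$ itself satisfies the non-Archimedean strong triangle inequality; this will take care of the bound $|\siinorm{v+w}|\le\max(|\siinorm{v}|,|\siinorm{w}|)$ throughout.

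First I would dispatch axiom~(i): for any $\lambda\in K$, multiplying $v$ by $\lambda$ scales both $\siinorm{v}_1$ and $\siinorm{v}_2$ by the common factor $\sgn(\lambda)|\lambda|$, so in particular the relative order of $|\siinorm{v}_1|$ and $|\siinorm{v}_2|$ is preserved (the case $\lambda=0$ being trivial), and homogeneity for the composition reduces to homogeneity for whichever $\siinorm{\cdot}_i$ wins.

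Next, for axiom~(ii), I would split into cases depending on the relative size of $|\siinorm{v}|$ and $|\siinorm{w}|$. If $|\siinorm{v}|>|\siinorm{w}|$, then for the index $i(v)\in\{1,2\}$ realizing $|\siinorm{v}_{i(v)}|=|\siinorm{v}|$ one has $|\siinorm{v}_{i(v)}|>|\siinorm{w}_{i(v)}|$, so Lemma~\ref{lem:rulesSignedSeminorms}(b) applied to $\siinorm{\cdot}_{i(v)}$ forces $\siinorm{v+w}_{i(v)}=\siinorm{v}_{i(v)}$. A quick bookkeeping using the seminorm property of $|\siinorm{\cdot}_{3-i(v)}|$ shows $|\siinorm{v+w}_{3-i(v)}|\le|\siinorm{v}_{i(v)}|=|\siinorm{v+w}_{i(v)}|$, so $\siinorm{v+w}=\siinorm{v+w}_{i(v)}=\siinorm{v}$, and the triangle inequality is trivial. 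The symmetric subcase $|\siinorm{w}|>|\siinorm{v}|$ is identical. When $|\siinorm{v}|=|\siinorm{w}|=r$ and $\siinorm{v}=-\siinorm{w}$, the required bound $\min(\siinorm{v},\siinorm{w})\le\siinorm{v+w}\le\max(\siinorm{v},\siinorm{w})$ is exactly $|\siinorm{v+w}|\le r$, which holds by the seminorm observation above.

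The main obstacle is the remaining case $\siinorm{v}=\siinorm{w}=r$ (WLOG $r\geq 0$), where the target is the sharp equality $\siinorm{v+w}=r$. Here one subdivides according to the pair $(i(v),i(w))$: if $i(v)=i(w)=i$, then axiom~(ii) for $\siinorm{\cdot}_i$ gives $\siinorm{v+w}_i=r$, and comparing absolute values with the other index shows $\siinorm{v+w}=r$; if $i(v)\ne i(w)$, then on each index one of $|\siinorm{v}_i|,|\siinorm{w}_i|$ strictly dominates the other (both equal to $r$), so Lemma~\ref{lem:rulesSignedSeminorms}(b) yields $\siinorm{v+w}_1=\siinorm{v+w}_2=r$, and hence $\siinorm{v+w}=r$ as required. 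This careful subdivision, and the use of the tiebreaking convention in the definition of the composition, is the step I expect to need the most care to write out cleanly.
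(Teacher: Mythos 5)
Your overall approach is sound and relies on the same two ingredients as the paper's proof: the shift principle of Lemma~\ref{lem:rulesSignedSeminorms}(b), and the fact that $\big|\siinorm{\cdot}\big| = \max\bigl(\big|\siinorm{\cdot}_1\big|,\big|\siinorm{\cdot}_2\big|\bigr)$ is a non-Archimedean seminorm, which supplies the bound $\big|\siinorm{v+w}\big|\le\max\bigl(\big|\siinorm{v}\big|,\big|\siinorm{w}\big|\bigr)$. The difference is organizational: the paper first observes that negating both signed seminorms negates their composition, so it suffices to verify only the upper inequality $\siinorm{v+w}\le\max(\siinorm{v},\siinorm{w})$ for all $v,w$, and then (after normalizing by symmetry) handles just two cases according to whether $\big|\siinorm{w}\big|$ exceeds $\big|\siinorm{v}\big|$ or not. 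You instead verify both inequalities directly, which costs more subcases but avoids the negation reduction; each version is a legitimate trade-off.

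One intermediate claim in your $i(v)\neq i(w)$ subcase is not quite right: you assert that at each index one of $\big|\siinorm{v}_i\big|,\big|\siinorm{w}_i\big|$ strictly dominates the other, so that Lemma~\ref{lem:rulesSignedSeminorms}(b) gives $\siinorm{v+w}_1 = \siinorm{v+w}_2 = r$. However, at the non-winning index there can be a tie: for instance if $i(v)=1$, $i(w)=2$ and $\big|\siinorm{v}_2\big|=\big|\siinorm{v}_1\big|=r$ with $\siinorm{v}_2=-r$, then at index $2$ you have $\siinorm{v}_2=-r$, $\siinorm{w}_2=r$, and axiom~(ii) only gives $\siinorm{v+w}_2\in[-r,r]$, not $=r$. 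This does not break the argument: the winning index for $v$ (say $1$) always has strict domination over $w$ by the tiebreak convention, so $\siinorm{v+w}_1=r$, and since $\big|\siinorm{v+w}_2\big|\le r$ the composition still selects index $1$ (by the tiebreak if equal), yielding $\siinorm{v+w}=r$. You should argue only about the winning index rather than both, and use the seminorm bound on the other index; with that repair the proof closes. Note also that the two subcases $(i(v),i(w))=(1,2)$ and $(2,1)$ are not exactly symmetric because the tiebreak privileges index $1$, so they deserve a short explicit sentence each.
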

\begin{proof}
We set $\siinorm{\cdot}\coloneqq \siinorm{\cdot}_1 \circ \siinorm{\cdot}_2$. Surely, for $\lambda \in K, v \in V$, we have $\siinorm{\lambda v}=|\lambda| \siinorm{v}$. 
It remains to show that $\siinorm{v+w}\leq \max\{\siinorm{v},\siinorm{w}\}$ for all $v,w \in V$, because we can replace all involved signed seminorms by their negative. We can assume $\siinorm{v}\geq \siinorm{w}$ and $\siinorm{v}\geq 0$ after possibly switching $v,w$ and multiplying by $-1$. If $|\siinorm{w}|>|\siinorm{v}|$, by Lemma \ref{lem:rulesSignedSeminorms} (b), we have $\siinorm{v+w}=\siinorm{w}$ and hence
$$\siinorm{v+w}=\siinorm{w}\leq \max\{\siinorm{v},\siinorm{w}\}\, .$$
If $\big|\siinorm{w}\big|\leq \big|\siinorm{v}\big|$, then $$\big|\siinorm{v+w}\big|\leq \max\{\big|\siinorm{v}\big|,\big|\siinorm{w}\big|\}=\big|\siinorm{v}\big|=\siinorm{v},$$
where the first inequality follows from the fact, that the maximum of two seminorms is again a seminorm. This completes the proof.
\end{proof}

\begin{example}[Diagonalizable signed seminorms]\label{ex:diagonalizable_signed_seminorms}
Let $B= (b_1,\dots,b_n)$ be an ordered basis of $V$ and $\vec{c} = (c_1,\ldots,c_n) \in \R_{\geq 0}^{n}$ parameters such that ${c_1 \geq \dots \geq c_n}$. We may associate to this datum a map 
\begin{align*}
\siinorm{\cdot}_{B, \vec{c}}: V &\longrightarrow \R\, , \\
v=\sum_{i=0}^n \lambda_i b_i &\longmapsto 
\sgn(\lambda_j)|\lambda_j| c_j\quad \text{if } j \text{ is minimal with } |\lambda_j|c_j=\max_{i\in [n]} \{|\lambda_i|c_i\}\, .
\end{align*}
Indeed, $\siinorm{\cdot}_{B, \vec{c}}$ is a signed seminorm: Let $B_j=(b_j,b_1,\dots,\hat{b}_j,\dots,b_n)$ be a reordering of $B$ and $\vec{c}_j=(c_j,0,\dots,0)\in \R^n_{\geq 0}$. It is easy to check that $\siinorm{\cdot}_{B_j,\vec{c}_j}$ is a signed seminorm. We have 
$$\siinorm{\cdot}_{B, \vec{c}}=\siinorm{\cdot}_{B_1,\vec{c}_1}\circ\dots\circ \siinorm{\cdot}_{B_n,\vec{c}_n},$$
so by Proposition \ref{prop:compSignedSeminorms} we obtain the result. Seminorms of the form $\siinorm{\cdot}_{B, \vec{c}}$ for an ordered basis $B=(b_1,\ldots,b_n)$ and parameters $\vec{c} = (c_1,\ldots,c_n) \in \R_{\geq 0}^{n}$ such that ${c_1 \geq \dots \geq c_n}$ are called \emph{diagonalizable}.
\end{example}

\begin{example} \label{exa:ordermatters}
To illustrate that the order of the basis $B$ really matters, we consider $V = K^2$, $B = (e_1,e_2),\ B' = (e_2,e_1)$, and $\Vec{c} = (1,1)$. Then $\siinorm{e_1 - e_2}_{B,\Vec{c}} = 1$, but $\siinorm{e_1 - e_2}_{B',\Vec{c}} = - 1$.
\end{example}

In the non-signed case, if the field is spherically complete (or equivalently, maximally complete), then every seminorm on a finite-dimensional vector space is diagonalizable \cite[Proposition 1.5]{BKKUV}. In particular, this is true for trivially or discretely valued fields, since they are spherically complete. In fact, the reverse implication also holds: If $\dim V \geq 2$ and $K$ is not spherically complete, then there exists a seminorm on $V$ that is not diagonalizable. In the signed world, the situation is more complicated, as the following example will show:

\begin{example}\label{ex:not_diagonalizable}
Let $K=\R\{\!\{t\}\!\}$, $\lvert\cdot\rvert_{K}$ the $t$-adic non-Archimedean absolute value on $K$ (\emph{cf.} Example \ref{ex:puiseux_series}), and $\lvert\cdot\rvert_{\triv}$ the trivial absolute value on $K$. Consider $V=K^2$ and the map 
\begin{align*}
\siinorm{\cdot}: K^2 &\to \R \, ,\\
(x,y) &\mapsto 
\begin{cases}
\sgn(x) \quad \text{if } |x|_K \geq |y|_K\, , \\
\sgn(y) \quad \text{else.}\\
\end{cases}
\end{align*}
Then $\siinorm{\cdot}$ is a signed seminorm over $K$ with respect to $\lvert\cdot\rvert_{\triv}$ (note that we choose the trivial absolute value!)
that is not diagonalizable: Surely, $\siinorm{\lambda v}=\sgn(\lambda) |\lambda|_{\triv} \siinorm{v}$ for all $\lambda\in K,v\in V$. Checking the triangle inequalities involves checking a lot of different cases, but is entirely straightforward and will therefore be omitted. 

To see that $\siinorm{\cdot}$ is not diagonalizable, assume that there exists an ordered basis $B=(b_1,b_2)$ of $K^2$ and $c_1\geq c_2 \geq 0$ that diagonalizes $\siinorm{\cdot}$. Then $c_1=c_2=1$, since this is the only positive value that $\siinorm{\cdot}$ attains and $(0,0)$ is the only vector being mapped to $0$. For $\lambda \in K$ with $0<|\lambda|_K\ll 1$, we have
$$\siinorm{\lambda b_1+b_2}=\siinorm{b_2}=\siinorm{-\lambda b_1 + b_2},$$
which contradicts the assumption that $B,(c_1,c_2)$ diagonalize $\siinorm{\cdot}$. 

We will show in Proposition \ref{prop:norm_diagonalizable}, that if $K=\R$ with trivial valuation, then every signed seminorm is diagonalizable. It remains open, if there are other real closed fields (with trivial or non-trivial valuation), for which every signed seminorm on finite-dimensional vector spaces are diagonalizable. Our proof will make crucial use of general hyperplane separation, which is only true for $\R$, but fails for all other real closed fields \cite{robson_separating}.
\end{example}

We endow the set of signed seminorms on $V$ with the topology of pointwise convergence. This is the coarsest topology such that all evaluation maps $\ev_v:  \siinorm{\cdot} \mapsto \siinorm{v}$ for $ v \in V$ are continuous. It agrees with the subspace topology of $\R\P^{(K^{n+1})^*}$. Two signed seminorms $\siinorm{\cdot}_1, \siinorm{\cdot}_2$ are said to be \emph{homothetic}, written $\siinorm{\cdot}_1 \sim \siinorm{\cdot}_2$, if there is a constant $c \neq 0$ such that $\siinorm{\cdot}_1 = c  \siinorm{\cdot}_2$. Homothety defines an equivalence relation on the space of signed seminorms.

\begin{definition}\label{def:signedGIspace}
The \emph{signed Goldman--Iwahori space} $\RX(V)$ is defined to be the quotient of the space of non-trivial signed seminorms on the dual space $V^{\ast}$ by homothety, \emph{i.e.},
\begin{equation*}
\RX(V) = \big(\{\siinorm{\cdot}: V^{\ast} \longrightarrow \R \text{ signed seminorm} \} \setminus \{0 \} \big)/_ \sim \, .
\end{equation*}
When $V = K^{n+1}$, we write $\RX_n(K)$ for $\RX(V)$.
\end{definition}

In \cite{BKKUV}, the (non-signed) Goldman--Iwahori space is denoted by $\calXbar(V)$, which is a compactification of the space of norms modulo homothety $\calX(V)$. Even though $\RX(V)$ will turn out to be compact (Corollary \ref{cor:RX_compact}), we omit the bar to declutter notation. 

Note the dualization in Definition \ref{def:signedGIspace}. This makes the assignment $V\mapsto \RX(V)$ a covariant functor from the category of finite-dimensional vector spaces over $K$ to the category of topological spaces via pulling back signed seminorms under linear maps. For a linear map $f:V\to W$, we write $\RX(f):\RX(V)\to \RX(W)$ for the induced continuous map.

\subsection{Tropicalization of \texorpdfstring{$\bm{\RX_n(K)}$}{RXn(K)}}\label{section:trop_of_RXn}
From now on, we consider the vector space $V=K^{n+1}$ together with its defined standard basis $\mathbf{e} = (e_0,\ldots,e_n)$ and the associated dual basis $\mathbf{e}^\ast = (e_0^\ast,\ldots, e_n^\ast)$ of $V^\ast$. This identifies $\A(V)$ and $\P(V)$ with $\A^{n+1}=\Spec K[t_0,\ldots, t_n]$ and $\P^n=\Proj K[t_0,\ldots, t_n]$, respectively. 

We have a natural continuous map $\tau: \P_r^{n,\an} \xrightarrow{}  \RX_n(K)$ by restricting a non-zero signed multiplicative seminorm on $K[t_0,\dots,t_n]$ to its degree $1$ part $K[t_0,\dots,t_n]_1 \cong (K^{n+1})^{\ast}$. Note that this is well-defined, taking into account the respective equivalence relations: if $\sinorm{\cdot}_1 \sim \sinorm{\cdot}_2 \in \P_r^{n,\an}$, then there is $c\neq 0$ such that for any homogeneous linear polynomial $f \in K[t_0,\dots, t_n]_1$ we have $\sinorm{f}_1=c\sinorm{f}_2$, hence the restrictions of $\sinorm{\cdot}_1$ and $\sinorm{\cdot}_2$ are homothetic.

\begin{remark}
While in the non-signed case the natural map $\P^{n,\an}\to \calXbar_n(K)$ was proven to be surjective in \cite[Proposition 2.7]{BKKUV}, it is currently not clear to the authors, if the same holds in the signed case. However, we will show surjectivity in Proposition \ref{prop:tau_surjective}, for $K=\R$ with trivial absolute value. In that case, every signed seminorm is diagonalizable and this allows us to construct a preimage. If one could show that the subspace of diagonalizable signed seminorms of $\RX_n(K)$ is dense, then topological arguments would imply surjectivity of $\tau$.
\end{remark}

\begin{definition}\label{def:tropicalizationX_n}
The \emph{real tropicalization map} $\trop_{\RX_n}\colon \RX_n(K)\rightarrow\R\P^n$ is given by associating to a signed seminorm $\siinorm{\cdot}\colon (K^{n+1})^\ast\rightarrow \R$ the tuple 
\begin{equation*}
\trop_{\RX_n}\big(\siinorm{\cdot}\big)=\big(\siinorm{e_0^\ast} :\ldots : \siinorm{e_n^\ast} \big) \in \R\P^{n} \, .
\end{equation*}
\end{definition}

Note that the association in Definition \ref{def:tropicalizationX_n} only depends on the homothety class of $\siinorm{\cdot}$, so it indeed descends to a map $\RX_n(K)\to \R\P^n$. Moreover, by the definition of the topology of $\RX_n(K)$, the real tropicalization map $\trop_{\RX_n}$ is continuous. Using Example \ref{ex:diagonalizable_signed_seminorms}, one can construct an inverse image for each point in $\R\P^n$ after possibly replacing some basis vectors by their negatives, hence $\trop_{\RX_n}$ is also surjective.

Recall that the real tropicalization map is given by
\begin{align*}
\trop_{r}: \P^{n,\an}_r &\longrightarrow \R\P^n\, ,\\
\big[\sinorm{\cdot}\big] &\longmapsto \big(|t_0|^{\sgn}:\dots:|t_n|^{\sgn}\big)\, . 
\end{align*}
By construction, $\trop_r:\P_r^{n,\an}\to \R\P^n$ factors as 
\begin{equation*}\label{eq_factorization}    \P_r^{n,\an}\xlongrightarrow{\tau}\RX_n(K)\xlongrightarrow{\trop_{\RX_n}} \R\P^n \, .
\end{equation*}
Let $\iota = (f_0:\dots:f_m): \P^n\hookrightarrow \P^m$ be a linear embedding, where $f_i \in (K^{n+1})^*$. We define 
$$\pi_{\iota}\coloneqq \ \trop_{\RX_m}\circ\ \RX(\iota): \RX_n(K)\longrightarrow \R\P^m\, .$$
A direct computation shows that for all $\big[\siinorm{\cdot}\big] \in \RX_n(K)$, we have
\begin{equation*}\pi_{\iota}\big(\big[\siinorm{\cdot}\big]\big) = \big(||f_0||^{\sgn}:\ldots : ||f_m||^{\sgn}\big)\, .
\end{equation*}

\begin{proposition}\label{prop:tropIsProj}
For a linear embedding $\iota\colon\P^n\hookrightarrow\P^m$, we have
$$\Trop_r(\P^n,\iota)= \trop_{\RX_m} \big(\RX(\iota)\big(\RX_n(K)\big) \big).$$
In particular, the following diagram commutes:
\begin{center}
\begin{tikzcd}
\P_r^{n,\an} \ar[r,"\iota_r^{\an}"] \ar[d,"\tau"] & \P_r^{m,\an} \ar[d,"\tau"] \\
\RX_n(K) \ar[r,"\RX(\iota)"]\ar[twoheadrightarrow,d, "\pi_{\iota}"] & \RX_m(K) \ar[d,"\trop_{\RX_m}"] \\
\Trop_r(\P^n,\iota) \ar[r,"\subseteq"] & \R\P^m
\end{tikzcd}
\end{center}

\end{proposition}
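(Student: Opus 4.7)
The plan is to prove the set-theoretic equality by two inclusions and then deduce the diagram. For the inclusion $\supseteq$, I would factor through the real analytification. First, I would check that the top square of the diagram commutes by unwinding definitions: both $\tau\circ \iota_r^{\an}$ and $\RX(\iota)\circ \tau$ send a homothety class $\bigl[\sinorm{\cdot}\bigr]\in \P_r^{n,\an}$ of a signed multiplicative seminorm on $K[t_0,\dots,t_n]$ to the class of the signed seminorm on $(K^{m+1})^*$ determined by the rule $\sum_i \alpha_i e_i^* \mapsto \sinorm{\sum_i \alpha_i f_i}$, since $\iota_r^{\an}$ pulls back along the graded ring map $s_i\mapsto f_i$ and $\RX(\iota)$ is induced by the dual $e_i^*\mapsto f_i$. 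Combining this with the factorization $\trop_r=\trop_{\RX_m}\circ \tau$ from Section \ref{section:trop_of_RXn} and the real analogue of the fundamental theorem $\trop_r(\P_r^{n,\an})=\Trop_r(\P^n,\iota)$ recalled at the end of Section \ref{section:realanalytificationandtrop}, this yields
\[
\Trop_r(\P^n,\iota)=\trop_{\RX_m}\bigl(\RX(\iota)(\tau(\P_r^{n,\an}))\bigr)\subseteq \trop_{\RX_m}(\RX(\iota)(\RX_n(K))).
\]

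For the reverse inclusion $\subseteq$, I would invoke Proposition \ref{prop:tableratropicalbasis} to identify $\Trop_r(\P^n,\iota)$ with the real tropical linear space $\Trop_r(\calM_\iota)$ of the realizable oriented valuated matroid associated to $\iota$. Fix a signed seminorm $\siinorm{\cdot}$ on $(K^{n+1})^*$. Using the explicit formula $\pi_\iota\bigl(\bigl[\siinorm{\cdot}\bigr]\bigr)=(\siinorm{f_0}:\dots:\siinorm{f_m})$ and Definition \ref{def:realtropicallinearspace}, it suffices to check that $0\in \bigoplus_{i\in S} C_i\cdot \siinorm{f_i}$ in $\R\T$ for every signed valuated circuit $C$ of $\calM_\iota$. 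By Example \ref{example:realizableorvalmatroid}, such a circuit arises from a minimal linear relation $\sum_{i\in S}\lambda_i f_i=0$ in $(K^{n+1})^*$ with $C_i=\sgn(\lambda_i)|\lambda_i|$. Since a signed seminorm is an $\R\T$-hypervector space homomorphism (Definition \ref{def:signedSeminorm}), an induction on $|S|$ based on Lemma \ref{lem:rulesSignedSeminorms} (for the two-term base case) together with the associativity of hyperfield addition in $\R\T$ should give
\[
0=\siinorm{\sum_{i\in S}\lambda_i f_i}\in \bigoplus_{i\in S}\sgn(\lambda_i)|\lambda_i|\cdot \siinorm{f_i},
\]
which is exactly the required circuit membership condition.

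With both inclusions in hand, the remaining commutativity in the displayed diagram is tautological: $\pi_\iota=\trop_{\RX_m}\circ \RX(\iota)$ holds by definition, and the set-theoretic equality ensures that $\pi_\iota$ factors through the inclusion $\Trop_r(\P^n,\iota)\hookrightarrow \R\P^m$. The main obstacle I anticipate is the hyperfield computation in the second paragraph: the multi-term sum in $\R\T$ is genuinely multivalued, so one must carefully track the intermediate subsets during the induction to ensure that the zero value propagates all the way through the final hyperfield sum.
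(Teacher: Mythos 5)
Your proposal is correct, and the substantive part (the inclusion $\pi_\iota(\RX_n(K))\subseteq\Trop_r(\P^n,\iota)$) follows the same strategy as the paper: reduce via Proposition~\ref{prop:tableratropicalbasis} to checking circuit conditions for the realizable $\R\T$-matroid $\calM_\iota$. The paper verifies the circuit condition by unwinding the ``maximum attained twice with opposite signs'' reformulation of $\Trop_r(\calM_C)$, using Lemma~\ref{lem:rulesSignedSeminorms}(b)--(c) directly; your inductive argument that a signed seminorm is an $\R\T$-hypervector-space homomorphism (so $\siinorm{\sum_e\lambda_e f_e}\in\bigoplus_e C_e\cdot\siinorm{f_e}$, hence $0$ lies in the hyperfield sum since the left side is $\siinorm{0}=0$) is an equivalent, slightly cleaner route, though you are right to flag that one must track element-of-a-set membership carefully through the multi-step hypersum. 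Where you actually go further than the paper is in the surjectivity half: the paper's proof only establishes one containment and leaves the $\supseteq$ direction (and the commutativity of the top square of the diagram, $\tau\circ\iota_r^{\an}=\RX(\iota)\circ\tau$) implicit as ``by construction.'' Your explicit verification that both composites send $\bigl[\sinorm{\cdot}\bigr]$ to the seminorm $\sum_i\alpha_i e_i^*\mapsto\sinorm{\sum_i\alpha_i f_i}$, combined with the factorization $\trop_r=\trop_{\RX_m}\circ\tau$ and the fundamental theorem $\trop_r(\P_r^{n,\an})=\Trop_r(\P^n,\iota)$, is exactly the argument needed to close that gap, so your write-up is in this respect a welcome expansion of the paper's terser treatment.
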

\begin{proof}
Let $\mathcal{M}$ be the realizable oriented valuated matroid on $\{f_0,\ldots,f_m\}\subset (K^{n+1})^*$ associated to the embedding $\iota$. By Proposition \ref{prop:tableratropicalbasis}, $\Trop_r(\P^n,\iota)$ equals the intersection of the real tropical hypersurfaces $\Trop_r(\calM_C)$, where $C$ ranges over the signed valuated circuits of $\calM$. Let $\siinorm{\cdot}$ be a signed seminorm on $(K^{n+1})^*$ and let $C$ be a signed valuated circuit of $\calM$. Then $C$ is associated to a minimal linear dependence $\sum_i \lambda_i f_i = 0$ among the $f_i$, \emph{i.e.}, $C = \{(\sgn(\lambda_1) |\lambda_1|,\ldots,\sgn(\lambda_m) |\lambda_m| \}$.  We need to show that $(\siinorm{f_0}:\dots:\siinorm{f_m})$ lies in 
\begin{align*}
\Trop_r(\calM_C) &= \{(y_0:\ldots:y_m) \in {\R\P}^m: \text{ there exist indices } i \neq j  \text{ such that} \\
&\quad \max_{e \in \Supp C}(|y_e|\cdot |C_e|) \text{ is attained at } i,j \text{ and } y_i \cdot C_i = - y_j C_j \}.
\end{align*}
The strong triangle inequalities for signed seminorms imply that 
$$\textstyle \big| \siinorm{ \sum_i \lambda_i f_i}\big| \leq \max_i(|\lambda_i| \siinorm{f_i}|) = \max_{e \in \Supp C}(|C_e \cdot \siinorm{f_e}|)\, .$$ 
Since  $\sum_i \lambda_i f_i = 0$, the maximum is attained twice. Let $I \subseteq \Supp C$ be the set of indices where the maximum $\max_{e \in \Supp C}(|C_e \cdot \siinorm{f_e}|)$ is attained. Again by the triangle inequalities applied to $\siinorm{ \sum_{i \in I} \lambda_i f_i}$ it follows from Lemma \ref{lem:rulesSignedSeminorms} that there must exist indices $i,j \in I, i \neq j$ with $ \siinorm{f_i} \cdot C_i = - \siinorm{f_j} C_j$.
\end{proof}

\section{The Limit Theorem}\label{section:limitthm}

In this section, we show a linear version of the limit theorem \cite[Theorem 6.14]{JSY22} which is a real analogue of the limit theorem \cite[Theorem A]{BKKUV}. We will have a similar setup as in \cite[\S 3]{BKKUV} which we now recall. We first set up a category of linear embeddings such that real tropicalization yields a covariant functor into the category of topological spaces. 

\begin{definition}\label{def:category}
Let $I$ be the cofiltered category of linear embeddings $\P^n \hookrightarrow U \subseteq \P^m$, where $U$ is a torus-invariant open subset of $\P^m$ with morphisms given by commutative triangles 
\begin{center}
\begin{tikzcd}
\P^m \ar[hookrightarrow,r,"\iota"] \ar[hookrightarrow,dr,"\iota'"'] & U \ar[d] \\
& U'
\end{tikzcd}
\end{center}
where $U\to U'$ is a composition of a coordinate projection and a coordinate permutation.
\end{definition}

\begin{lemma}\label{lem:well-definedness}
Let $\iota:\P^n\hookrightarrow U\subseteq \P^m$ and $ \iota':\P^n\hookrightarrow U'\subseteq \P^{m'}$ be linear embeddings and $\varphi: U\to U'$ be a morphism in $I$ with $\varphi\circ \iota = \iota'$. 
\begin{enumerate}[(a)]
\item The morphism $\varphi$ induces a natural composition of coordinate projections and permutations \[\varphi_r^{\trop}: \Trop_r(\P^n,\iota)\to  \Trop_r(\P^n,\iota')\, .\] 
\item The following diagram commutes: 
\begin{center}
\begin{tikzcd}
\RX_n(K) \ar[r, "\pi_{\iota}"] \ar[dr, "\pi_{\iota'}"'] & \Trop_r(\P^n,\iota) \ar[d, "\varphi_r^{\trop}"] \\
& \Trop_r(\P^n,\iota')
\end{tikzcd}
\end{center}
In particular, there is a natural map $$\varprojlim_{\iota \in I} \pi_{\iota}: \RX_n(K) \to \varprojlim_{\iota \in I} \Trop_r\big(\PP^n,\iota\big).$$
\end{enumerate}
    
\end{lemma}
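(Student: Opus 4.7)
For part~(a), I first unpack the structure of a morphism in $I$. By Definition~\ref{def:category}, $\varphi\colon U \to U'$ is the restriction of a rational map $\P^m\dashrightarrow \P^{m'}$ of the form $(x_0:\dots:x_m)\mapsto (x_{\sigma(0)}:\dots:x_{\sigma(m')})$ for some injective function $\sigma\colon\{0,\dots,m'\}\hookrightarrow\{0,\dots,m\}$. The very same formula defines a continuous map on the subset of $\R\P^m$ where not all coordinates $x_{\sigma(j)}$ vanish, and I take this to be $\varphi_r^{\trop}$. Writing $\iota = (f_0:\dots:f_m)$ and $\iota' = (g_0:\dots:g_{m'})$, the hypothesis $\varphi\circ\iota = \iota'$ forces $g_j = f_{\sigma(j)}$ (up to a common nonzero scalar which can be absorbed into the choice of projective representative). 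Since $\iota'$ is an embedding, the linear forms $g_0,\dots,g_{m'}$ have no common zero on $\P^n$, so by functoriality of real tropicalization along the realisable map $\varphi$, the map $\varphi_r^{\trop}$ is defined on all of $\Trop_r(\P^n,\iota)$ and sends it into $\Trop_r(\P^n,\iota')$, which proves~(a).

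For part~(b), I argue by an explicit computation using the formula for $\pi_\iota$ recorded just before Proposition~\ref{prop:tropIsProj}. For any $\bigl[\siinorm{\cdot}\bigr]\in \RX_n(K)$,
\[
\varphi_r^{\trop}\bigl(\pi_\iota\bigl(\bigl[\siinorm{\cdot}\bigr]\bigr)\bigr) = \varphi_r^{\trop}\bigl((\siinorm{f_0}:\dots:\siinorm{f_m})\bigr) = (\siinorm{f_{\sigma(0)}}:\dots:\siinorm{f_{\sigma(m')}}),
\]
and the right-hand side equals $(\siinorm{g_0}:\dots:\siinorm{g_{m'}}) = \pi_{\iota'}\bigl(\bigl[\siinorm{\cdot}\bigr]\bigr)$ by the identification $g_j = f_{\sigma(j)}$ from~(a). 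Because $\siinorm{\cdot}$ is nontrivial and the $g_j$ span $(K^{n+1})^{*}$, at least one $\siinorm{g_j}$ is nonzero, so the expression above is a genuine point of $\R\P^{m'}$. This gives the required commutative triangle, and the existence of the canonical map $\varprojlim_{\iota\in I}\pi_\iota$ then follows from the universal property of the projective limit in topological spaces, applied to the compatible family $\{\pi_\iota\}_{\iota \in I}$.

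The main (and essentially only) subtle point is to verify that $\varphi_r^{\trop}$ is everywhere defined on $\Trop_r(\P^n,\iota)$, which requires the spanning property of the $g_j$ together with the non-triviality of the signed seminorm; everything else is a direct unravelling of the definitions of $\pi_\iota$ and of the combinatorial action of coordinate projections and permutations on $\R\P^m$.
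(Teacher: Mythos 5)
The paper states this lemma without a proof, treating it as routine, so there is no paper argument to compare against directly; what matters is whether your proof stands on its own. It does. Your identification of the one genuinely subtle point — that $\varphi_r^{\trop}$ must be shown to be everywhere defined on $\Trop_r(\P^n,\iota)$, which is not automatic for a coordinate projection — is exactly right, and your resolution via the spanning property of the $g_j$ together with non-triviality of the signed seminorm is the correct one.

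One small expository remark: in part~(a) you appeal to ``functoriality of real tropicalization along the realisable map $\varphi$,'' which is a bit loose as stated, since the functoriality of $\Trop_r$ under toric morphisms is precisely the content one is trying to justify at this level of generality. The cleaner route, which you in fact use implicitly in part~(b), is to invoke the equality $\Trop_r(\P^n,\iota)=\pi_\iota\bigl(\RX_n(K)\bigr)$ from Proposition~\ref{prop:tropIsProj}: every point of $\Trop_r(\P^n,\iota)$ is already of the form $\bigl(\siinorm{f_0}:\cdots:\siinorm{f_m}\bigr)$ for some non-trivial signed seminorm $\siinorm{\cdot}$, and since $g_0=f_{\sigma(0)},\dots,g_{m'}=f_{\sigma(m')}$ span $(K^{n+1})^{*}$ (because $\iota'$ is an embedding), not all of $\siinorm{g_0},\dots,\siinorm{g_{m'}}$ can vanish. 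This removes the hand-waving and makes parts~(a) and~(b) a single unified computation. The remainder — absorbing the projective scalar so that $g_j=f_{\sigma(j)}$, the computation in~(b), and passing to the projective limit by its universal property — is exactly as it should be.
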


\begin{theorem}\label{thm:limits}
The natural map
$$\varprojlim_{\iota \in I} \pi_{\iota}\colon \RX_n(K)\stackrel{\cong}{\longrightarrow} \varprojlim_{\iota \in I} \Trop_r\big(\PP^n,\iota\big)$$ 
from Lemma \ref{lem:well-definedness} is a homeomorphism.
\end{theorem}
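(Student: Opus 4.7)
The approach mirrors the non-signed analogue \cite[Theorem A]{BKKUV}. Continuity of $\varprojlim_{\iota \in I} \pi_{\iota}$ is immediate from the universal property of the projective limit together with the continuity of each $\pi_\iota$ established in Section \ref{section:trop_of_RXn}. The plan is to establish bijectivity directly, then promote this to a homeomorphism via a compactness argument, which avoids having to construct the inverse continuously by hand.

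For injectivity, I would first note that iterated application of the strong triangle inequality to $f=\sum_i\lambda_i e_i^*$ shows that a nontrivial signed seminorm necessarily has $\siinorm{e_i^*}\neq 0$ for some $i$. Given two non-homothetic signed seminorms $\siinorm{\cdot}_1,\siinorm{\cdot}_2$, this allows us to pick an $e_i^*$ on which both are nonzero and rescale one representative so that $\siinorm{e_i^*}_1=\siinorm{e_i^*}_2$; non-homothety then produces an $f\in (K^{n+1})^*$ with $\siinorm{f}_1\neq \siinorm{f}_2$, and the embedding $\iota_f=(e_0^*:\dots:e_n^*:f):\P^n\hookrightarrow \P^{n+1}$ distinguishes them under $\pi_{\iota_f}$.

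For surjectivity, given a compatible system $(y_\iota)_{\iota\in I}$, fix a representative $y_{\id}=(a_0:\dots:a_n)$ of the image under the identity embedding $\id\colon\P^n\to\P^n$. For each $f\in (K^{n+1})^*$, compatibility with the coordinate projection $\iota_f\to \id$ determines a unique extension $y_{\iota_f}=(a_0,\dots,a_n,b_f)$, and we set $\siinorm{f}:=b_f$. The signed seminorm axioms are then verified using Proposition \ref{prop:tableratropicalbasis}, which states that the real tropicalization of a linear embedding coincides with the real tropical linear space of the associated oriented valuated matroid. Applying this to $\iota_{f,\lambda f}=(e_0^*:\dots:e_n^*:f:\lambda f)$, the circuit $(\lambda f)-\lambda\cdot f=0$ forces the two last coordinates of $y_{\iota_{f,\lambda f}}$ to differ exactly by $\sgn(\lambda)|\lambda|$, yielding homogeneity $\siinorm{\lambda f}=\sgn(\lambda)|\lambda|\siinorm{f}$. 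Applying it to $\iota_{f,g,f+g}=(e_0^*:\dots:e_n^*:f:g:f+g)$, the circuit $f+g-(f+g)=0$ gives the $\R\T$-relation $0\in \siinorm{f}\oplus \siinorm{g}\oplus(-\siinorm{f+g})$, which unpacks precisely to both strong triangle inequalities in Definition \ref{def:signedSeminorm}. Finally, the compatibility $\pi_\iota(\siinorm{\cdot})=y_\iota$ for a general $\iota=(f_0:\dots:f_m)$ follows by considering the enlarged embedding $(e_0^*:\dots:e_n^*:f_0:\dots:f_m)$ and projecting to both $\iota$ and each $\iota_{f_i}$.

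It remains to upgrade the continuous bijection to a homeomorphism. The target $\varprojlim_{\iota\in I}\Trop_r(\P^n,\iota)$ is Hausdorff as a projective limit of Hausdorff spaces, so it suffices to show that $\RX_n(K)$ is compact. After normalizing each nontrivial signed seminorm so that $\max_i |\siinorm{e_i^*}|=1$, the triangle inequality gives $|\siinorm{f}|\leq \max_i|\lambda_i|=:C_f$ for $f=\sum_i\lambda_i e_i^*$. The normalized signed seminorms form a closed subset of the compact product $\prod_f [-C_f,C_f]$, and $\RX_n(K)$ is the quotient by the residual continuous $\Z/2$-action $\siinorm{\cdot}\mapsto -\siinorm{\cdot}$, hence compact Hausdorff. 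A continuous bijection from a compact space to a Hausdorff space is closed, giving the homeomorphism. The main obstacle I anticipate is the verification of the two strong triangle inequalities from the $\R\T$-hyperplane equation: carefully tracking the signs through Definition \ref{def:realtropicallinearspace} is where the signed theory genuinely departs from the non-signed case, since the two-valued hyperfield sum now encodes both inequalities at once rather than a single one.
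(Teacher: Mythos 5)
Your proof follows the same inject-surject skeleton as the paper's, but the final upgrade to a homeomorphism is genuinely different. The paper argues directly that the two topologies coincide (both being initial topologies for the families of evaluation/projection maps), which is terse but also leaves compactness of $\RX_n(K)$ as a downstream corollary of the theorem. You instead establish compactness of $\RX_n(K)$ \emph{first}, by normalizing representatives so that $\max_i|\siinorm{e_i^*}|=1$, observing that the signed-seminorm axioms plus this normalization cut out a closed subset of the compact product $\prod_f[-C_f,C_f]$, and then taking the $\Z/2$-quotient. Since the target is Hausdorff (a closed subspace of a product of compact Hausdorff spaces), the continuous bijection is automatically a homeomorphism. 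This is a cleaner and more self-contained route: it replaces the paper's hand-waving about ``coarsest topologies'' with a standard compactness fact, and it makes the paper's Corollary \ref{cor:RX_compact} come for free rather than as a consequence. Your surjectivity step is also phrased a bit differently — you invoke Proposition \ref{prop:tableratropicalbasis} and read off the axioms from explicit circuit relations in $(e_0^*:\cdots:e_n^*:f:g:f+g)$, whereas the paper routes through Proposition \ref{prop:tropIsProj} and pulls back a signed seminorm representing $y_\jmath$ — but these are cosmetic variations on the same idea.

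Two small gaps worth patching. In the injectivity argument, you claim that two non-homothetic nontrivial signed seminorms admit a common $e_i^*$ on which both are nonzero; this is not automatic. However, if the supports $\{i:\siinorm{e_i^*}_1\neq 0\}$ and $\{i:\siinorm{e_i^*}_2\neq 0\}$ are disjoint (both nonempty), then the identity embedding $\id$ already separates the classes, since projective equality would force equal supports. So the argument survives with a one-line case split: either $\pi_{\id}$ already distinguishes, or the supports on basis vectors agree and you can pick your common $i$. In the homogeneity check, the embedding $\iota_{f,\lambda f}$ and the circuit $\lambda\cdot f-(\lambda f)=0$ only make sense for $\lambda\neq 0$; the case $\lambda=0$ should be handled separately by noting that $0\in E$ is a loop, so the embedding $(e_0^*:\cdots:e_n^*:0)$ forces $\siinorm{0}=0$. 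You should also remember the (also easily fixed) restriction that the triple circuit in $(e_0^*:\cdots:e_n^*:f:g:f+g)$ is a genuine circuit only when $f,g$ are linearly independent; the dependent case reduces to homogeneity. None of these affect the soundness of the overall approach.
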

\begin{proof}
The proof is a combination of \cite[Theorem 1.1]{payne_analytification}, \cite[Theorem 6.13]{JSY22} and \cite[Theorem 3.5]{BKKUV}.

To show injectivity of $\varprojlim_{\iota \in I} \pi_{\iota}$, let $[\siinorm{\cdot}_1], [\siinorm{\cdot}_2] \in \RX_n(K)$ be two homothety classes such that $\varprojlim_{\iota \in I} \pi_{\iota} ([\siinorm{\cdot}_1]) = \varprojlim_{\iota \in I} \pi_{\iota} ([\siinorm{\cdot}_2]).$ By the same argument as in the proof of \cite[Theorem 3.5]{BKKUV} the signed seminorms $\siinorm{\cdot}_1, \siinorm{\cdot}_2$ have the same kernel. Let $f,g \in (K^{n+1})^*$ be outside of this kernel. We extend $f,g$ to a generating set $f,g,f_2,\ldots,f_m$ of $(K^{n+1})^*$ and we consider the corresponding linear embedding $\iota = [f:g:f_2:\cdots:f_m]: \P^n \hookrightarrow \P^m$. Then we have $\pi_\iota([\siinorm{\cdot}_1]) = \pi_\iota([\siinorm{\cdot}_2])$ as elements in $\R\P^m$ and thus the quotient of the two first coordinates equals $$\frac{\siinorm{f}_1}{\siinorm{g}_1} = \frac{\siinorm{f}_2}{\siinorm{g}_2}. $$
This implies that $\siinorm{\cdot}_1$ and $\siinorm{\cdot}_2$ are homothetic.

To show surjectivity of $\varprojlim_{\iota \in I} \pi_{\iota}$, let $(y_{\jmath})_{\jmath \in I}\in \varprojlim_{\jmath\in I} \Trop_r\big(\PP^n,\jmath\big).$ First, we consider the identity $\id=\big[e_0^{\ast}:\ldots:e_n^{\ast}\big]:\P^n\to\P^n$. After a permutation of coordinates we may assume that the first coordinate $y_{\id,0}$ of $y_{\id}\in \R\P^n$ is not $0$. As in the proof of \cite[Theorem 3.5]{BKKUV} it follows that for all linear embeddings $\iota=\big[e_0^{\ast}:f_1:\cdots:f_{m}\big]$, the first coordinate $y_{\iota,0}$ is not $0$.  We will construct a signed seminorm $\siinorm{\cdot}$ with $\siinorm{e_0^{\ast}}=1$ and $\pi_{\jmath}(\siinorm{\cdot})=y_{\jmath}$ for all $\jmath\in I$. Let $f \in (K^{n+1})^*.$ We choose an embedding $\jmath=\big[e_0^*:f:f_2:\cdots:f_m \big]: \P^n \to \P^m$ and define
$$\siinorm{f} := \frac{y_{\jmath,1}}{y_{\jmath,0}}.$$  By the same argument as in the proof of \cite[Theorem 3.5]{BKKUV} using coordinate projections and permutations, this definition does not depend on the choice of $\jmath.$

We check that the constructed map is indeed a signed seminorm. For $f \in (K^{n+1})^*$ and $\lambda \in K$ consider any embedding $\jmath=\big[ e_0^{\ast}:f:\lambda f: \ldots \big]$. Then, by Proposition \ref{prop:tropIsProj}, for every $y_{\jmath}\in \Trop_r(\P^n,\jmath)$ there is a class of a signed seminorm $\big[\siinorm{\cdot}_1\big] \in \RX_n(K)$ with 
        \begin{align*}
            y_{\jmath}&=\pi_{\jmath}\big(\big[\siinorm{\cdot}_1\big]\big) \\
            &= \big(\trop_{\RX_m}\circ \RX(\jmath)\big)\big(\big[\siinorm{\cdot}_1\big]\big)\\
            &=\big[\siinorm{e_0^{\ast}}_1: \siinorm{f}_1: \siinorm{\lambda f}_1: \cdots \big]
        \end{align*} 
and thus $\sgn(\lambda) |\lambda| y_{\jmath,1} =y_{\jmath,2}.$ Therefore, $$\siinorm{\lambda f} = \frac{y_{\jmath,2}}{y_{\jmath,0}} = \frac{\sgn(\lambda) |\lambda| y_{\jmath,1}}{y_{\jmath,0}} = \sgn(\lambda) |\lambda| \siinorm{f}.$$ For $f,g \in (K^{n+1})^{\ast}$, the inequalities $ \min\{\siinorm{f},\siinorm{g}\} \leq  ||f+g||^{\sgn}\leq \max\{\siinorm{f},\siinorm{g}\}$ follow similarly by considering an embedding containing $f,g$ and $f+g$. By construction, the signed seminorm $\siinorm{\cdot}$ is an inverse image of $(y_{\jmath})_{\jmath\in I}$.

Finally, as in the proof of \cite[Theorem 6.13]{JSY22}, the map is a homeomorphism because the topology on the left is defined as the coarsest topology such that $\siinorm{\cdot} \mapsto \siinorm{f}$ is continuous for all $f \in (K^{n+1})^*$, while on the right the topology is defined such that all projection maps, that is, all maps $\varprojlim_{\iota \in I} \Trop_r\big(\PP^n,\iota\big) \to \Trop_r\big(\PP^n,\iota\big)$ to a particular $\iota$ are continuous. These conditions are equivalent.
\end{proof}

\begin{corollary}\label{cor:RX_compact}
The signed Goldman--Iwahori space $\RX_n(K)$ is compact and Hausdorff.
\end{corollary}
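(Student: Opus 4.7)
The plan is to deduce both statements directly from the homeomorphism of Theorem~\ref{thm:limits}, by transferring compactness and the Hausdorff property from the limit side. So the first step is to check both properties for each individual real tropicalization $\Trop_r(\P^n,\iota)\subseteq \R\P^m$. Since $\R\P^m$ is compact Hausdorff (it is identified with $\RTP^m$, a finite quotient of the disjoint union of $2^{m+1}$ copies of $\T\P^m$ glued along their boundary strata, each copy being a simplex), the Hausdorff property is inherited by any subspace. For compactness, one needs $\Trop_r(\P^n,\iota)$ to be closed in $\R\P^m$: this is by construction, since by Section~\ref{section:realtropicalization} the real tropicalization of a closed subvariety is defined as the closure of the image of the real tropicalization map.

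Next, I would invoke the standard fact that the projective (inverse) limit of a diagram of compact Hausdorff spaces is compact Hausdorff. Concretely, $\varprojlim_{\iota\in I}\Trop_r(\P^n,\iota)$ is the subspace of the product $\prod_{\iota\in I}\Trop_r(\P^n,\iota)$ cut out by the compatibility conditions $\varphi^{\trop}_r(y_\iota)=y_{\iota'}$ for every morphism $\varphi\colon\iota\to\iota'$ in $I$. The product is compact Hausdorff by Tychonoff's theorem, and each compatibility condition defines a closed subset (as the equalizer of two continuous maps into a Hausdorff space), so their intersection is closed, hence compact Hausdorff.

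Finally, applying the homeomorphism from Theorem~\ref{thm:limits} transports these properties to $\RX_n(K)$. This step is essentially automatic, so the main work has already been done in establishing the limit theorem; the corollary itself is just its immediate topological consequence. I do not expect any genuine obstacles — the only minor point to double-check is that each $\Trop_r(\P^n,\iota)$ really is closed in $\R\P^m$ (not merely locally closed), but this follows from the very definition of $\Trop_r$ as a closure.
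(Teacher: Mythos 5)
Your argument is exactly the paper's: both deduce the claim from Theorem~\ref{thm:limits}, using that each $\Trop_r(\P^n,\iota)$ is compact Hausdorff (closed in $\R\P^m$ by definition) and that the inverse limit is a closed subspace of the compact Hausdorff product. You simply spell out the Tychonoff/equalizer details that the paper leaves implicit.
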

\begin{proof}
This follows from Theorem \ref{thm:limits} since all $\Trop_r(\P^n,\iota)$ are compact and Hausdorff, hence the inverse limit is a closed subspace of the compact product space and is thus itself compact and Hausdorff.
\end{proof}

\begin{remark}
Let $K$ be trivially valued. Then all real tropicalized linear spaces $\Trop_r(\P^n,\iota)$ are homeomorphic to $\R\P^n$, see Corollary \ref{cor:BMfanIsTrop} and Proposition \ref{prop:realtrophomeom}. One might suspect that the limit space $\RX_n(K) $ is also homeomorphic to $\R\P^n$. However, in the category $I$ there are coordinate projections, which do not induce homeomorphisms between the respective real tropicalizations. Hence, one cannot conclude that $\RX_n(K) $ is homeomorphic to $\R\P^n$.
\end{remark}

\subsection{Relation to the Goldman--Iwahori Space}\label{section:relGIspace}
Recall that the \emph{Goldman--Iwahori space} $\calXbar_n(K)$ is the space of non-trivial (ordinary) seminorms on $(K^{n+1})^*$ modulo homothety. It has been studied first by Goldman and Iwahori \cite{GoldmanIwahori} (also see for more recent accounts \cite{Werner_seminorms,RemyThuillierWernerII, RemyThuillierWerner_survey}). However, our notation will follow \cite{BKKUV}, where it was proved that $\calXbar_n(K)$ is the limit of all linear tropicalizations of $\P^n$. The \emph{compactified affine Bruhat--Tits building} $\calBbar_n(K)$ of $\PGL((K^{n+1})^*)$ is the subspace of $\calXbar_n(K)$ of classes of \emph{diagonalizable} seminorms. The inclusion $\calBbar_n(K)\subseteq \calXbar_n(K)$ is dense by \cite[Remark 1.9]{BKKUV} and we have an equality if and only if $K$ is \emph{spherically complete} by \cite[Satz 24]{Krull}. 

The association $\siinorm{\cdot} \mapsto \big|\siinorm{\cdot}\big|$ defines a natural map 
$$\Phi: \RX_n(K) \longrightarrow \calXbar_n(K)\, .$$
Since morphisms in $I$ commute with taking pointwise absolute values, we obtain the following:

\begin{theorem}\label{thm:commutativityPhi}
\ 
For any morphism $\varphi \in I$ between linear embeddings $\iota,\iota'$ of $\P^n$, the following diagram commutes:
\begin{center}
\begin{tikzcd}
\RX_n(K) \ar[r,"\pi_{\iota}"] \ar[d, "\Phi"] \arrow[bend left=20]{rr}{\pi_{\iota'}}
 & \Trop_r(\P^n,\iota)\ar[r, "\varphi_r^{\trop}"] \ar[d] & \Trop_r(\P^n,\iota') \ar[d] \\
\calXbar_n(K) \ar[r,"\pi_{\iota}"]  \arrow[bend right=20]{rr}{\pi_{\iota'}} & \Trop(\P^n,\iota) \ar[r, "\varphi^{\trop}"] & \Trop(\P^n,\iota')
\end{tikzcd}
\end{center}
In particular, $\Phi$ is exactly the map induced by all $|\pi_{\iota}|:\RX_n(K)\to \Trop(\P^n,\iota)$.
\end{theorem}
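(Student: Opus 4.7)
The plan is to verify commutativity of the two squares and the two bent arrows separately and then appeal to the universal property of the inverse limit for the final assertion.

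First, I would unfold the definitions to check the leftmost square. For a signed seminorm $\siinorm{\cdot}$ on $(K^{n+1})^*$ and $\iota = [f_0 : \cdots : f_m]$, Section \ref{section:trop_of_RXn} gives
\[\pi_\iota\big(\big[\siinorm{\cdot}\big]\big) = \big(\siinorm{f_0} : \cdots : \siinorm{f_m}\big) \in \R\P^m,\]
and the map $\lvert\cdot\rvert_{\R\P^m}$ sends this to $(|\siinorm{f_0}| : \cdots : |\siinorm{f_m}|) \in \T\P^m$. Going the other way, $\Phi\big(\big[\siinorm{\cdot}\big]\big) = \big[|\siinorm{\cdot}|\big]$ and the (non-signed) tropicalization $\pi_\iota$ from \cite{BKKUV} evaluates this seminorm at $f_0,\ldots,f_m$, yielding the same tuple. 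Note that this uses only that the pointwise absolute value of a signed seminorm is a seminorm, which is Lemma \ref{lem:rulesSignedSeminorms}(a), together with the observation that the equivalence relation on $\R\P^m$ (scaling by $\R^*$) descends to the equivalence relation on $\T\P^m$ (scaling by $\R_{>0}$) after applying $\lvert\cdot\rvert$ coordinatewise.

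Next I would verify the right square. A morphism $\varphi: U \to U'$ in $I$ is, by Definition \ref{def:category}, a composition of a coordinate projection and a coordinate permutation. The induced map $\varphi_r^{\trop}$ on $\R\P^m$ is given by the same formula on coordinates, as is $\varphi^{\trop}$ on $\T\P^m$. Since both coordinate projections and coordinate permutations manifestly commute with the pointwise absolute value $\lvert\cdot\rvert$, we obtain $\lvert\cdot\rvert_{\R\P^{m'}} \circ \varphi_r^{\trop} = \varphi^{\trop} \circ \lvert\cdot\rvert_{\R\P^m}$, which is exactly commutativity of the right square. The top bent arrow equals the composition $\varphi_r^{\trop} \circ \pi_\iota$ by Lemma \ref{lem:well-definedness}(b), and the bottom bent arrow equals $\varphi^{\trop} \circ \pi_\iota$ by the analogous (non-signed) statement established in \cite{BKKUV}.

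Finally, for the ``in particular'' assertion, I would invoke \cite[Theorem A]{BKKUV} identifying $\calXbar_n(K)$ with the limit $\varprojlim_{\iota \in I} \Trop(\P^n, \iota)$. The universal property shows that any continuous map into $\calXbar_n(K)$ is determined by its compositions with all the projections $\pi_\iota$. The commutativity just established shows that $\pi_\iota \circ \Phi = \lvert\cdot\rvert_{\R\P^m} \circ \pi_\iota$ for every $\iota \in I$, so $\Phi$ coincides with the map induced from the family $\big(\lvert\cdot\rvert_{\R\P^m} \circ \pi_\iota\big)_{\iota \in I}$.

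I do not anticipate a serious obstacle here; the content is essentially the compatibility of the signed absolute value with coordinate projections and permutations. The only mildly delicate point is keeping track of the two different equivalence relations on $\R\P^m$ and $\T\P^m$ when verifying that $\lvert\cdot\rvert_{\R\P^m}$ is well-defined on homothety classes, but this was already handled when $\pi_\iota$ was introduced in Section \ref{section:trop_of_RXn}.
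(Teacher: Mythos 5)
Your proposal is correct and follows the same reasoning the paper uses; the paper gives no formal proof at all, simply noting that "morphisms in $I$ commute with taking pointwise absolute values," and your argument spells out exactly the square-by-square verification that this one-line remark is abbreviating (left square via Lemma \ref{lem:rulesSignedSeminorms}(a), right square via compatibility of coordinate projections/permutations with $\lvert\cdot\rvert$, and the final assertion via the limit description of $\calXbar_n(K)$).
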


\begin{proposition}
The map $\Phi: \RX_n(K) \longrightarrow \calXbar_n(K)$ is surjective.
\end{proposition}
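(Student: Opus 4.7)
The plan is to use the limit theorem (Theorem \ref{thm:limits}) together with the analogous result for the ordinary Goldman--Iwahori space from \cite[Theorem A]{BKKUV} to reduce surjectivity of $\Phi$ to a fiberwise lifting problem that can be solved by a standard compactness argument. Under these two homeomorphisms, and by the commuting squares in Theorem \ref{thm:commutativityPhi}, $\Phi$ is identified with the inverse limit of the componentwise absolute value maps
\[
|\cdot|_\iota \colon \Trop_r(\P^n,\iota) \longrightarrow \Trop(\P^n,\iota),
\]
induced by $|\cdot|_{\R\P^m}\colon \R\P^m \to \T\P^m$, as $\iota$ ranges over $I$.

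First I would verify that each $|\cdot|_\iota$ is surjective. Since $|\trop_r(x)|_{\R\P^m} = \trop(x)$ for every $x \in \P^n(K)$, the map $|\cdot|_{\R\P^m}$ sends $\trop_r(\P^n(K))$ onto $\trop(\P^n(K))$. Because $\Trop_r(\P^n,\iota)$ is compact (as a closed subset of $\R\P^m$) and $|\cdot|_{\R\P^m}$ is continuous, the image $|\cdot|_\iota\bigl(\Trop_r(\P^n,\iota)\bigr)$ is compact, hence closed in $\Trop(\P^n,\iota)$, and it contains the dense subset $\trop(\P^n(K))$. Consequently this image equals $\Trop(\P^n,\iota)$; in the trivially valued case, one first passes to a non-trivially valued real closed extension as in Section \ref{section:realtropicalization}.

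Next, given any compatible system $(y_\iota)_{\iota \in I} \in \varprojlim_{\iota \in I} \Trop(\P^n,\iota) \cong \calXbar_n(K)$ corresponding to a seminorm class $[\seminorm{\cdot}]$, I would consider the fibers $F_\iota \coloneqq |\cdot|_\iota^{-1}(y_\iota) \subseteq \Trop_r(\P^n,\iota)$. Each $F_\iota$ is non-empty by surjectivity of $|\cdot|_\iota$, and is closed in the compact Hausdorff space $\Trop_r(\P^n,\iota)$, so $F_\iota$ itself is compact and Hausdorff. The commutativity in Theorem \ref{thm:commutativityPhi}, together with the compatibility $\varphi^{\trop}(y_\iota) = y_{\iota'}$ for every morphism $\varphi\colon \iota \to \iota'$ in $I$, implies that the transition maps $\varphi_r^{\trop}$ restrict to maps $F_\iota \to F_{\iota'}$. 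Hence $\{F_\iota\}_{\iota \in I}$ is a cofiltered inverse subsystem of non-empty compact Hausdorff spaces, so by the classical fact that such limits are non-empty, $\varprojlim_{\iota \in I} F_\iota \neq \emptyset$. Any element of this limit lies in $\varprojlim_{\iota \in I} \Trop_r(\P^n,\iota) \cong \RX_n(K)$ and is a preimage of $[\seminorm{\cdot}]$ under $\Phi$.

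The main obstacle is not particularly deep, but two places require care: verifying surjectivity of each $|\cdot|_\iota$ (since tropicalizations are defined as closures of images), and checking that the system of fibers is well-defined via Theorem \ref{thm:commutativityPhi}. Once these are in place, the argument is purely formal and works uniformly, without any appeal to spherical completeness or diagonalizability of seminorms, which is essential here since diagonalizability can already fail in the trivially valued case (\emph{cf.}\ Example \ref{ex:not_diagonalizable}).
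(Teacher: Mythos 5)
Your proof is correct, and it takes a genuinely different route from the paper. The paper's argument is direct and constructive at the level of seminorms: it observes that both $\RX_n(K)$ and $\calXbar_n(K)$ are compact Hausdorff, constructs by hand (via Example~\ref{ex:diagonalizable_signed_seminorms}) a diagonalizable signed seminorm lifting any given \emph{diagonalizable} seminorm, so that the image of $\Phi$ contains $\calBbar_n(K)$, and then concludes since the image of $\Phi$ is compact (hence closed) and $\calBbar_n(K)$ is dense in $\calXbar_n(K)$. Your argument instead works entirely on the tropical side: you pass through the limit identifications of Theorem~\ref{thm:limits} and its unsigned analogue from \cite{BKKUV}, use Theorem~\ref{thm:commutativityPhi} to identify $\Phi$ with the inverse limit of the absolute-value maps on each finite-level tropicalization, prove each of these finite-level maps is surjective by a closed-image-contains-dense-subset argument, and then lift a compatible family by the classical fact that a cofiltered inverse limit of non-empty compact Hausdorff spaces is non-empty. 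Both proofs lean on compactness, but the key lemma differs: density of the diagonalizable locus in the paper versus finite-level surjectivity plus cofilteredness for you. Your version buys you independence from diagonalizability and density facts about seminorms, and would transfer more readily to settings where such a dense diagonalizable locus is not available; the paper's version is shorter and stays closer to the explicit seminorm picture. Both are sound, and your checks of the two delicate points (surjectivity of each $|\cdot|_\iota$ and that the fibers form a well-defined inverse subsystem via the commuting squares) are exactly the right things to verify.
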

\begin{proof}
The Goldman--Iwahori space is a compact Hausdorff space, since it is the limit of compact Hausdorff spaces. Moreover, so is the signed Goldman--Iwahori space by Corollary \ref{cor:RX_compact}. Via Example \ref{ex:diagonalizable_signed_seminorms}, we construct preimages for all classes $\big[||\cdot||\big]\in \calBbar_n(K)$, hence the image of $\Phi$ contains $\calBbar_n(K)$. Since the image of $\Phi$ is compact, it needs to be closed, thus density of $\calBbar_n(K)$ implies surjectivity of $\Phi$.
\end{proof}

In general, the fibers of $\Phi$, though non-empty, can vastly differ in complexity. In Section \ref{section:Rtrivial}, we will consider the trivially valued case $K=\R$ and show that the fibers can be singletons, finite sets, or even infinite.

\begin{remark}[Generalization to enriched valuations]\label{rem:enriched}
The limit theorem (Theorem~\ref{mainthm:limits}) fits into a broader framework of \emph{enriched valuations} \cite{maxwell2023geometry, maxwell2024generalising}. 
For any enriched valuation $v: K \to \mathbb{H}[\R]$ one defines $v$-seminorms on $(K^{n+1})^*$ and a \emph{$v$-Goldman--Iwahori space} $\mathcal{X}_n(K,v)$ of non-trivial $v$-seminorms modulo $(\mathbb{H}[\R])^*$. Provided that the $v$-tropicalization of a linear embedding $\iota\colon \P^n \hookrightarrow \P^m$ is cut out by the $\mathbb{H}[\R]$-circuits of the associated $\mathbb{H}[\R]$-matroid, the same proof as Theorem~\ref{mainthm:limits} yields an isomorphism
\[
\mathcal{X}_n(K,v) \stackrel{\cong}{\longrightarrow} \varprojlim_{\iota \in I} \Trop_v\!\left(\P^n, \iota\right).
\]
For example, this property holds for the fine valuation, see \cite{smith2024matroidstropicalextensionstracts}[Prop. 5.12]. 
It would be interesting to develop a universal $\mathbb{H}[\R]$-matroid description of $\mathcal{X}_n(K,v)$ analogous to Theorem~\ref{mainthm:GIspaceIsTropMuniv}, and to study the corresponding $v$-Goldman--Iwahori space as a universal enriched tropical linear space.
\end{remark}

\section{The Universal Realizable Oriented Valuated Matroid}\label{section:Muniv}
We may extend the definition of a matroid over a hyperfield (Definition \ref{def:matroidsoverhyperfieldsGP}) to infinite ground sets. This generalization to infinite ground sets works as in \cite[\S 4, \S 7]{BKKUV} which is done for the tropical hyperfield $\T$, \emph{i.e.}, for valuated matroids of finite rank on possibly infinite ground sets. In this section, we will do the same for the real tropical hyperfield $\R\T$.

Let $E$ be now a possibly infinite ground set and $\mathcal{M}$ be an oriented valuated matroid given by a Grassmann--Plücker function $\varphi: E^n \to \R\T$. Note that the rank of $\mathcal{M}$ is $n$, so it is in particular finite.  We define $\R\P^E : = \{(y_e)_{e\in E} | y_e \in \R \} \setminus \{(0)_{e \in E}\}  / \R^*$ and equip it with the topology of pointwise convergence. Analogous to Definition \ref{def:realtropicallinearspace} we define: 

\begin{definition}
The \emph{real tropical linear space} $\Trop_r(\mathcal{M}) \subseteq\R\P^E$ associated to $\mathcal{M}$ is the set of $(y_e)_{e \in E} \in \R\P^E$ such that for any $C=(f_0,\dots,f_{n+1}) \in E^{n+2}$ we have that 
$$0 \in \bigoplus_{i=0}^{n+1} (-1)^i y_{f_i} \cdot \varphi(C - f_i)\, .$$
\end{definition}

Recall that $\oplus$ denotes the hyperfield sum of the real tropical hyperfield $\R\T=\R$. 

Let $K$ be a real closed field with a compatible absolute value $\lvert \cdot \rvert_K$. We now extend the construction of realizable oriented valuated matroids from Example \ref{example:realizableorvalmatroid} to the ground set $E = (K^{n+1})^*$. 

\begin{definition}
Let $E = (K^{n+1})^*$. The \emph{universal realizable oriented valuated matroid} or \emph{universal $\R\T$}-matroid $\mathcal{M}_{\univ}$ of rank $n+1$ is given by the Grassmann--Plücker function
\begin{equation*}\begin{split}
\varphi_{\univ}:\ E^{n+1} &\longrightarrow \R\T, \\
(f_0,\ldots,f_n) &\longmapsto \sgn(\det[f_0,\dots,f_n]) \cdot \lvert\det[f_0,\dots,f_n]\rvert\, .
\end{split}\end{equation*}  
\end{definition}

By considering $- \log |\varphi_{\univ}|$, we obtain the universal realizable valuated matroid $w_{\univ} = \val \circ \det$ studied in \cite{BKKUV}.  Hence, in contrast to $w_{\univ}$, the map $\varphi_{\univ}$ also keeps track of the signs.

Note that both the signed Goldman--Iwahori space and the real tropical linear space associated to the universal realizable oriented valuated matroid are defined to be subsets of $\R\P^E$.

\begin{theorem}\label{thm:GIspaceIsTropMuniv}
The signed Goldman--Iwahori space is the real tropical linear space associated to the universal realizable oriented valuated matroid $\mathcal{M}_{\univ}$, \emph{i.e.}
\begin{align*}
\RX_n(K) = \Trop_r(\mathcal{M}_{\univ}).
\end{align*}
\end{theorem}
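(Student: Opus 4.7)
The plan is to prove both inclusions of $\RX_n(K)$ and $\Trop_r(\mathcal{M}_{\univ})$ inside $\R\P^E$. The forward inclusion $\RX_n(K)\subseteq \Trop_r(\mathcal{M}_{\univ})$ will be a direct application of the Plücker linear dependence satisfied by any $n{+}2$ vectors in $(K^{n+1})^{\ast}$, combined with the observation that the strong triangle inequalities for a signed seminorm are precisely the statement $\siinorm{v+w}\in \siinorm{v}\oplus \siinorm{w}$ in $\R\T$. The reverse inclusion will be obtained from Theorem \ref{thm:limits}: I build from any point $y\in \Trop_r(\mathcal{M}_{\univ})$ a compatible family of points in $\Trop_r(\P^n,\iota)$ across all embeddings $\iota\in I$, and let the limit theorem produce a signed seminorm.

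\textbf{The easy inclusion.} Let $\siinorm{\cdot}$ represent a class in $\RX_n(K)$ and fix a tuple $(f_0,\ldots,f_{n+1}) \in E^{n+2}$. Since $\dim(K^{n+1})^{\ast}=n{+}1$, the Plücker identity
\[
\sum_{i=0}^{n+1} (-1)^i \det\bigl[f_0,\ldots,\widehat{f_i},\ldots,f_{n+1}\bigr]\, f_i \;=\; 0
\]
holds. Setting $\lambda_i=(-1)^i\det[f_0,\ldots,\widehat{f_i},\ldots,f_{n+1}]$, an iterated application of the strong triangle inequality (whose precise content is that $\siinorm{v+w}\in \siinorm{v}\oplus \siinorm{w}$ in $\R\T$, as one checks directly from the definition of $\oplus$) yields $0=\bigl\lVert\sum_i \lambda_i f_i\bigr\rVert^{\sgn}\in \bigoplus_i \siinorm{\lambda_i f_i}$ in $\R\T$. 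Since $\siinorm{\lambda_i f_i}=(-1)^i\siinorm{f_i}\cdot \varphi_{\univ}(f_0,\ldots,\widehat{f_i},\ldots,f_{n+1})$, this is exactly the Grassmann--Plücker incidence defining membership in $\Trop_r(\mathcal{M}_{\univ})$.

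\textbf{The reverse inclusion.} Given $y=(y_f)_{f\in E}\in \Trop_r(\mathcal{M}_{\univ})$ and a linear embedding $\iota=(f_0:\cdots:f_m)\in I$, form $y_\iota=(y_{f_0}:\cdots:y_{f_m})\in \R\P^m$. By Proposition \ref{prop:tableratropicalbasis}, $\Trop_r(\P^n,\iota)=\Trop_r(\mathcal{M}_\iota)$, where $\mathcal{M}_\iota$ is $\mathcal{M}_{\univ}$ restricted to $\{f_0,\ldots,f_m\}$; the Grassmann--Plücker conditions that $y$ inherits on this finite subset then imply $y_\iota\in \Trop_r(\mathcal{M}_\iota)$ via the cryptomorphism between Grassmann--Plücker and circuit presentations of $\R\T$-matroids from \cite{bowler2019perfectmatroidsoverhyperfields}. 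Compatibility of $(y_\iota)_{\iota\in I}$ under the coordinate projections and permutations comprising the morphisms of $I$ is clear from the construction, so $(y_\iota)$ defines an element of $\varprojlim_{\iota\in I} \Trop_r(\P^n,\iota)$, and Theorem \ref{thm:limits} lifts it to a signed seminorm $\siinorm{\cdot}\in \RX_n(K)$ whose value at $f\in E$ equals $y_f$.

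\textbf{Main obstacle.} The delicate point is matching, on each finite subset $\{f_0,\ldots,f_m\}$, the Grassmann--Plücker description of $\Trop_r(\mathcal{M}_\iota)$ used in Section \ref{section:Muniv} with the circuit-based definition appearing in Proposition \ref{prop:tableratropicalbasis}. This is an $\R\T$-analogue of Speyer's theorem identifying tropical linear spaces by their Plücker versus circuit presentations. One direction follows by Cramer-style expansion, while the other requires writing each $(n{+}2)$-term Plücker incidence as a hyperfield combination of circuit incidences, which is subtle because $\R\T$-addition is multi-valued. I would either invoke Bowler's cryptomorphism directly or verify the equivalence by hand, leveraging the double distributivity of $\R\T$; this is also the place where the extension of the framework of \cite{BKKUV} to infinite ground sets has to be adapted carefully from valuated to oriented valuated matroids.
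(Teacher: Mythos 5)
Your forward inclusion is essentially the paper's: apply Cramer's rule to an arbitrary $(n{+}2)$-tuple, then read the strong triangle inequalities as the hyperfield condition $\siinorm{v+w}\in\siinorm{v}\oplus\siinorm{w}$. The paper's reverse inclusion, by contrast, verifies the seminorm axioms for $e\mapsto y_e$ directly (citing methods from \cite[\S 7.1]{BKKUV}), whereas you route through Theorem~\ref{thm:limits}. Both are valid; the paper explicitly acknowledges your route as an alternative in Remark~\ref{rem:GIspaceIsTropMuniv}(b). The limit-theorem route is conceptually pleasant because it isolates the finite combinatorics into Proposition~\ref{prop:tableratropicalbasis}, but it incurs the cost of verifying that a point satisfying the $(n{+}2)$-tuple condition restricts to a point of $\Trop_r(\mathcal{M}_\iota)$ in the circuit-based sense of Definition~\ref{def:realtropicallinearspace}. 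The paper's direct argument avoids any appeal to the limit theorem, which some readers may prefer since Theorem~\ref{thm:limits} is itself the heavier result.

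On your ``main obstacle'': you are worrying about the wrong direction, and the one you actually need is elementary, not a ``Speyer-type'' equivalence. Given $y$ satisfying the $(n{+}2)$-tuple Grassmann--Plücker condition of Section~\ref{section:Muniv}, and given a signed valuated circuit $D$ of $\calM_\iota$ with support $S\subseteq\{f_0,\ldots,f_m\}$ of size $k\le n+2$: complete $S$ to an $(n{+}2)$-tuple $S\cup T$ by choosing $T\subset(K^{n+1})^*$ of size $n+2-k$ with $\Span(S\setminus\{s\})\oplus\Span(T)=(K^{n+1})^*$ for one (equivalently, every) $s\in S$. Then for $g\in T$ the Cramer coefficient vanishes because $S$ is dependent, while for $s\in S$ it is nonzero because $(S\setminus\{s\})\cup T$ is a basis. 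The resulting Cramer vector therefore has support exactly $S$ and, being a linear dependence on a circuit, must be a $K$-scalar multiple of the one realizing $D$. Hence the $(n{+}2)$-term orthogonality for $S\cup T$ is, up to multiplying by a fixed nonzero real scalar, precisely $0\in\bigoplus_{e\in S}y_e\cdot D_e$. This is the only implication your argument requires; the converse (circuit orthogonality $\Rightarrow$ all $(n{+}2)$-term Plücker incidences), which is genuinely harder, is never used. With that clarification your proof is complete and correct.
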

\begin{proof}
Consider a signed seminorm $\siinorm{.}$ on $(K^{n+1})^*$. To show that $\big[\siinorm{.}\big] \in \Trop_r(\mathcal{M}_{\univ})$, take $C=(f_0,\dots,f_{n+1}) \in E^{n+2}$. Then, by an application of Cramer's rule 
$$\sum_i (-1)^i\det(C-f_i) f_i = 0\, .$$
We denote by $\lambda_i \coloneqq (-1)^i\det(C-f_i)$. Then
$$0 = \siinorm{ \sum_{i} \lambda_i f_i } \in \bigoplus_{i} \sgn(\lambda_i)|\lambda_i| \cdot \siinorm{f_i}.$$ 
By construction, $\sgn(\lambda_i)|\lambda_i| = (-1)^i \varphi_{\univ}(C - f_i)$. This shows that $\big[\siinorm{.}\big] \in \Trop_r(\mathcal{M}_{\univ}).$

Conversely, using similar methods as in \cite[\S 7.1]{BKKUV}, one can see that the circuit conditions of $(y_e)_{e \in E}$ being in $\Trop_r(\mathcal{M}_{\univ})$ imply that $\siinorm{e} = y_e$ is a signed seminorm.
\end{proof}

\begin{remark}\label{rem:GIspaceIsTropMuniv}
\begin{enumerate}[(a)]
\item Theorem \ref{thm:GIspaceIsTropMuniv} is a signed analogue of \cite[Theorem C]{BKKUV} which gives an identification $\calXbar_n(K) = \Trop(w_{\univ})$.
\item Theorem \ref{thm:GIspaceIsTropMuniv} has a compelling interpretation in light of the limit Theorem \ref{thm:limits}. For any finite subset $E' \subseteq E$ containing a basis, we can restrict $\mathcal{M}_{\univ}$ to $E'$ and we have a natural surjective map $\Trop_r(\mathcal{M}_{\univ}) \xrightarrow{} \Trop_r(\mathcal{M}_{\univ}|_{E'})$. We have $$\Trop_r(\mathcal{M}_{\univ}) \cong \varprojlim_{\substack{E' \subset E \\ |E'| < \infty } } \Trop_r(\mathcal{M}_{\univ}|_{E'})$$ 
where the limit is taken over all finite generating subsets $E'$ of $E$. Hence, one could alternatively prove Theorem \ref{thm:GIspaceIsTropMuniv} via Theorem \ref{mainthm:limits} and Proposition \ref{prop:tableratropicalbasis}.
\item Theorem \ref{thm:GIspaceIsTropMuniv} gives an interpretation of $\RX_n(K)$ as a real tropicalized linear space in the following way: Consider for $E=(K^{n+1})^*$ the \emph{universal embedding}
\[\iota_{\univ}: \P^{n}\longhookrightarrow \P\left( K^{E}\right) \, .\] 
Then the associated oriented valuated matroid is of course $\mathcal{M}_{\iota_{\univ}}=\mathcal{M}_{\univ}$. Hence, $\RX_n(K)$ can be seen as the real tropicalization of $\P^n$ with respect to $\iota_{\univ}$.
\end{enumerate}
\end{remark}


\section{The Case of Real Numbers with Trivial Valuation}\label{section:Rtrivial}

For the case that $K=\R$ with trivial valuation, it will turn out that all signed seminorms are diagonalizable. This lets us describe the signed Goldman--Iwahori space explicitly over this field. Moreover, it allows us to describe the real tropical linear space of $\Muniv$ as a \emph{real Bergman fan}.

\subsection{Diagonalizability of Signed Seminorms}\label{section:diag_signed_seminorms}
We want to describe $\RX(V)$ explicitly by making use of the map $\Phi: \RX(V)\to \calBbar(V)$, which assigns to each signed seminorm its absolute value. Therefore, we quickly recall an explicit description of $\calBbar(V)$ in terms of flags of subspaces. The following holds for any trivially valued field $K$:

\begin{proposition}[{\cite[Example 1.10]{BKKUV}}]\label{prop:building_trivial_valuation}
Let the dimension of $V$ be $n$.
There is a bijection
$$\calBbar(V) \stackrel{1:1}{\longleftrightarrow} \big\{\big(0 = V_0 \subsetneq  V_1 \subsetneq  \dots \subsetneq  V_l=V^{\ast},0< d_1 < \dots < d_{l-1} <1 \big), d_i \in \R \big\}_{l=1,\ldots,n}\, .$$
In other words, a seminorm is given by a flag of subspaces together with increasing real weights.
\end{proposition}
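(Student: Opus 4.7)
The plan is to construct the bijection explicitly in both directions, using that sublevel sets of an ultrametric seminorm are linear subspaces and that seminorms in this setting are determined by a basis-adapted diagonal datum. Two preparatory facts drive the construction. First, the trivial valuation makes $K$ spherically complete, so by \cite[Proposition 1.5]{BKKUV} every seminorm on the finite-dimensional $V^{\ast}$ is diagonalizable and in particular takes only finitely many values. Second, the ultrametric inequality combined with $|\lambda|=1$ for all $\lambda\in K^{\times}$ forces every sublevel set $\{v^{\ast}\in V^{\ast} \mid \|v^{\ast}\|\leq r\}$ to be a $K$-linear subspace of $V^{\ast}$.

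For the forward map, I would associate to a non-trivial seminorm $\|\cdot\|$ its filtration by sublevel sets. Listing the distinct positive values attained on $V^{\ast}\setminus\{0\}$ as $v_1<\dots<v_l$ and using homothety to normalize so that $v_l=1$, I set $V_0:=0$, $V_j:=\{v^{\ast}\mid \|v^{\ast}\|\leq v_j\}$ for $j\geq 1$, and $d_j:=v_j$ for $j=1,\dots,l-1$. The preparatory facts guarantee that the $V_j$ are genuine subspaces, that the inclusions are strict (since each $v_j$ is actually attained), that $V_l=V^{\ast}$, and that $0<d_1<\dots<d_{l-1}<1$. For the inverse map, starting from a pair (flag, weights) I would extend by $d_0:=0$ and $d_l:=1$, pick any basis $b_1,\ldots,b_n$ of $V^{\ast}$ with $b_1,\ldots,b_{\dim V_j}$ spanning $V_j$ for each $j$, and set $c_i:=d_j$ whenever $\dim V_{j-1}<i\leq \dim V_j$. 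Then the non-signed analog of Example~\ref{ex:diagonalizable_signed_seminorms} yields the diagonalizable seminorm $\|\sum\lambda_i b_i\|:=\max_i\{c_i|\lambda_i|\}$.

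The main subtlety I expect is the basis-independence of the inverse construction: one must check that the resulting seminorm depends only on the flag and the weights, not on the choice of adapted basis. This comes down to the observation that if $v^{\ast}\in V_j\setminus V_{j-1}$, then in any adapted basis the coordinates of $v^{\ast}$ vanish for $i>\dim V_j$ and at least one coordinate is nonzero in the range $(\dim V_{j-1},\dim V_j]$, so the maximum in the definition is exactly $d_j$. Once this is in place, the two maps are visibly mutual inverses: reconstructing from the extracted data reproduces the seminorm pointwise, since both assign the value $d_j$ to every vector in $V_j\setminus V_{j-1}$; conversely, the sublevel-set filtration of the seminorm built from (flag, weights) recovers the original flag, and its set of distinct positive values recovers the weights.
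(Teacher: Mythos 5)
Your proposal has the right skeleton (trivial valuation forces spherical completeness, hence diagonalizability with finitely many values; sublevel sets are subspaces since $|\lambda|=1$; an adapted basis reconstructs a diagonal seminorm from a weighted flag), but it mishandles the kernel, so the forward and backward maps fail to be mutually inverse on proper seminorms. The space $\calBbar(V)$ is the \emph{compactified} building and contains homothety classes of diagonalizable seminorms with nontrivial kernel $W\neq 0$. Your forward map always sets $V_0:=0$, which lumps $W$ into $V_1=\{v^*\mid \|v^*\|\leq d_1\}$ and forgets where the kernel sits; in particular two seminorms with the same positive sublevel sets but different kernels are sent to the same flag, so the map is not injective. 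Your inverse map builds $\|\sum\lambda_i b_i\|=\max_i c_i|\lambda_i|$ with every $c_i\geq d_1>0$ (your $d_0:=0$ is never invoked because $\dim V_0=0$), so it only ever produces honest norms; the forward-then-inverse composition then returns $d_1$ on all of $V_1\setminus 0$ rather than the original value $0$ on $W\setminus 0$.

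The paper's construction instead takes $V_0$ to be the \emph{kernel} of the normalized seminorm (any proper subspace of $V^*$, possibly zero), and the weights $d_1<\dots<d_{l-1}<d_l=1$ record the positive values on the successive layers $V_i\setminus V_{i-1}$ for $i\geq 1$; correspondingly the inverse construction must assign $c_i:=0$ for $i\leq\dim V_0$. (The equality $0=V_0$ in the displayed statement appears to be a slip: the surrounding prose states ``The kernel of $\lVert\cdot\rVert$ then equals $V_0$,'' the signed analogue Proposition~\ref{prop:normsFromFlags} writes $0\subseteq V_0$, and Example~\ref{example:calBbar(K^2)} explicitly exhibits boundary points with $V_0\neq 0$.) Once $V_0$ is made the kernel, the rest of your argument --- basis-independence via the ``at least one nonzero coordinate in the top range'' observation, and the bound $l\in\{1,\dots,n\}$ --- goes through unchanged.
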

The bijection works the following way: for a homothety class of seminorms we choose the representative $\lVert \cdot \rVert$ that has maximal value $1$ on $V$. Then one obtains the flag of subspaces as subsets $\lVert \cdot \rVert^{-1}(0,\varepsilon)$ by letting $\varepsilon$ vary. The kernel of $\lVert \cdot \rVert$ then equals $V_0$ and the coordinates $d_i$ are given by the constant value of $\lVert \cdot \rVert$ on $V_i\setminus V_{i-1}$ for $i=1,\dots,l-1$. In particular, $\calBbar(V)$ can be identified with the compactified cone over the order complex of the lattice of non-trivial subspaces of $V^{\ast}$.

\begin{minipage}[l]{.56 \textwidth}
\begin{example}\label{example:calBbar(K^2)}
In the building $\calBbar_1(K)$ homothety classes of seminorms correspond to flags of subspaces of $(K^2)^{\ast}$ together with a single coordinate $0 < d < 1$. 
Each cone corresponds to a one-dimensional subspace $V_1$ and the point at infinity of each cone corresponds to the homothety class of a proper seminorm, as visualized in Figure \ref{fig:example_trivial_valuation}. A norm in the homothety class corresponding to $(V_1,d)$ has generic value $1$, and value $d$ on $V_1\setminus\{0\}$. A boundary point is given by a subspace $V_0$ which is the kernel of a proper seminorm. Hence in this particular case of rank one, the boundary equals $\P^1(K)$. The central point $\eta$ corresponds to the class of the seminorm that takes value $1$ everywhere except at $0$.
\end{example}
\end{minipage}
\hspace{-1cm}
\begin{minipage}[c]{.6 \textwidth}
\begin{center}
\begin{tikzpicture}[scale=0.65]

\draw[black, dotted, thick](0,0) circle (3) {};

\draw[] (-2.5,0) -- (2.5,0);
\draw[dotted] (-3,0) -- (3,0);
\draw[] (0,-2.5) -- (0,2.5);
\draw[dotted] (0,-3) -- (0,3);
\draw[] ({-sqrt(6.25/2)},{-sqrt(6.25/2)}) -- ({sqrt(6.25/2)},{sqrt(6.25/2)});
\draw[dotted] ({-sqrt(9/2)},{-sqrt(9/2)}) -- ({sqrt(9/2)},{sqrt(9/2)});
\draw[] (0,0) -- ({sqrt(6.25/2)},{-sqrt(6.25/2)});
\draw[dotted] (0,0) -- ({sqrt(9/2)},{-sqrt(9/2)});

\draw[very thick, loosely dotted] (-0.5,2) arc (atan(1/4)+90:180-atan(1/4):2) ;

\draw[] (3,0.2) node[anchor=south west]{$\P^1(K)$};
\filldraw[black] (0,0) circle (2pt) node[anchor=south east]{$\eta$};
\filldraw[black] ({sqrt(9/2)},{sqrt(9/2)}) circle (2pt) node[anchor=south west]{$V_0$};
\filldraw[black] (1,1) circle (2pt) node[anchor=north west]{$(V_1,d )$};
\draw[white] (-3,0.2) node[anchor=south east]{$.$};

\end{tikzpicture}
\captionof{figure}{The building $\calBbar_1(K)$}
\label{fig:example_trivial_valuation}
\end{center}

\end{minipage}

\vspace{3mm}
From now on, we again consider $K=\R$. Preimages of intervals in the signed case are no longer subspaces, but only convex cones:

\begin{lemma}\label{lem:convex_cone}
Let $\siinorm{\cdot}$ be a signed seminorm on $V$ and $[a,b]$ be a closed interval in $\R$. Then $L = (\siinorm{\cdot})^{-1}([a,b])\cup \{0\}$ is a convex cone in $V$. Moreover, if $b \geq 0$ and $a=-b$, then $L$ is a subspace.
\end{lemma}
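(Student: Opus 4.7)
The plan is to verify directly the two defining properties of a convex cone (closure under non-negative scalar multiplication, closure under addition), using the trivial valuation on $K=\R$ together with the two axioms from Definition \ref{def:signedSeminorm}.

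For closure under scalar multiplication by $\lambda\geq 0$, I would exploit that under the trivial valuation $|\lambda|=1$ for every $\lambda\in\R^{\times}$. By axiom (i) this gives $\siinorm{\lambda v}=\sgn(\lambda)\,\siinorm{v}$ for $\lambda\neq 0$, so if $v\in L\setminus\{0\}$ and $\lambda>0$ then $\siinorm{\lambda v}=\siinorm{v}\in[a,b]$, whence $\lambda v\in L$. The case $\lambda=0$ is covered by $0\in L$ by definition.

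For closure under addition, I would take $v,w\in L$ and split into cases. If $v=0$ or $w=0$ or $v+w=0$, the sum is in $L$ trivially. Otherwise $\siinorm{v},\siinorm{w}\in[a,b]$ and the strong triangle inequalities of axiom (ii) give
\begin{equation*}
a\;\leq\;\min\{\siinorm{v},\siinorm{w}\}\;\leq\;\siinorm{v+w}\;\leq\;\max\{\siinorm{v},\siinorm{w}\}\;\leq\;b,
\end{equation*}
so $v+w\in L$. Together with the first step this already gives that $L$ is a convex cone, since convexity follows from closure under addition and under scaling by non-negative reals.

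For the second claim, assume $b\geq 0$ and $a=-b$, so that $[a,b]=[-b,b]$ is symmetric about $0$. For $v\in L\setminus\{0\}$ and $\lambda<0$ I would use $\siinorm{\lambda v}=-\siinorm{v}$, which lies in $[-b,b]$ precisely because $\siinorm{v}$ does; combined with the non-negative case above, $L$ is closed under multiplication by arbitrary scalars, and hence, together with closure under addition, is a linear subspace. No serious obstacle is anticipated; the only subtlety is the bookkeeping of the degenerate cases ($v$, $w$, or $v+w$ equal to $0$), which are all absorbed by the explicit inclusion of $0$ in $L$.
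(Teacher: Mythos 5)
Your proposal is correct and follows essentially the same route as the paper: the cone property from axiom (i) with the trivial valuation, and the containment in $[a,b]$ from the two-sided strong triangle inequality. The only cosmetic difference is that you verify closure under addition and invoke the standard fact that a cone closed under addition is convex, whereas the paper checks convexity directly on $(1-t)v+tw$; the underlying estimate is identical.
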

\begin{proof}
Surely for $\lambda \in \R_{> 0}$ and $v\in V$, we have $||\lambda v||^{\sgn}=||v||^{\sgn}$, so the preimage of an interval is a cone. To see convexity, let $v,w \in L, t \in [0,1]$. Then $$||(1-t)v + tw|| \leq \max \{||(1-t)v||, ||tw||\} = \max\{||v||, ||w||\} \leq b.$$ Similarly, $$||(1-t)v + tw|| \geq \min\{||(1-t)v||, ||tw||\} = \min\{||v||, ||w||\} \geq a.$$ If $b\geq 0$ and $a=-b$, then surely also $v\in L$ implies $-w \in L$, so with the above $L$ is also a subspace.
\end{proof}

\begin{lemma}\label{lem:properties_siinorms}
Let $0\neq \siinorm{\cdot}$ be a signed seminorm on $V$. 
\begin{enumerate}[(a)]
\item The image of $\siinorm{\cdot}$ is finite.
\item For $a=\max_{v\in V} ||v||^{\sgn}$, we have that $A\coloneqq \{v \in V \mid ||v||^{\sgn}=a\}$ is a convex cone. 
\item There exists $w \in V^{\ast}$ such that 
\begin{align*}
 A^{\circ} &= \{v \in V \mid \langle v,w \rangle >0\}, \\
 \overline{A} &= \{v \in V \mid \langle v,w \rangle \geq 0\}\text{ and } \\
 \partial A &=w^{\perp}   
\end{align*}

where $A^{\circ}/\overline{A}/\partial A$ denote the interior/closure/boundary of $A$ in $V$ with respect to the Euclidean topology.
\item A signed seminorm $\siinorm{\cdot}$ induces a well-defined signed seminorm on the quotient $V/(\ker \siinorm{\cdot}) \to \R$.
\item The restriction of $\siinorm{\cdot}$ to any subspace is again a signed seminorm.
\end{enumerate}
\end{lemma}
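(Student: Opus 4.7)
My approach treats the five parts in order from easiest to hardest, with part (c) being the main content.

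For (a), by Lemma \ref{lem:rulesSignedSeminorms}(a) the absolute value $|\siinorm{\cdot}|$ is an ordinary seminorm, and since $\R$ with trivial valuation is spherically complete, \cite[Proposition 1.5]{BKKUV} implies it is diagonalizable. Diagonalizable seminorms over trivially valued fields take only the finitely many values $\{0,c_1,\ldots,c_n\}$, so the image of $\siinorm{\cdot}$ is contained in $\{0,\pm c_1,\ldots,\pm c_n\}$ and is finite. Part (b) is Lemma \ref{lem:convex_cone} applied to the interval $[a,a]$. For (d), if $v-v'\in\ker\siinorm{\cdot}$ then $\siinorm{v-v'}=0$, and either $v'\in\ker\siinorm{\cdot}$ as well (so both map to $0$) or Lemma \ref{lem:rulesSignedSeminorms}(b) gives $\siinorm{v}=\siinorm{v'+(v-v')}=\siinorm{v'}$; thus $\siinorm{\cdot}$ is well defined on $V/\ker\siinorm{\cdot}$, and the axioms of Definition \ref{def:signedSeminorm} descend. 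Part (e) is immediate since those axioms are universally quantified.

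The main content is (c). First I would establish that $A$ spans $V$: for $v\in V$ with $|\siinorm{v}|<a$ and any $v'\in A$, Lemma \ref{lem:rulesSignedSeminorms}(b) forces $\siinorm{v+v'}=\siinorm{v'}=a$, so $v+v'\in A$ and hence $v=(v+v')-v'\in\Span(A)$; since $\siinorm{v}=-a$ gives $-v\in A$, this covers all of $V$. A convex subset of $V=\R^n$ with full affine hull has nonempty Euclidean interior, so $A^\circ\neq\emptyset$. The same lemma also shows $\overline A\cup-\overline A=V$: for $v$ with $|\siinorm{v}|<a$ and $v'\in A$, each $\epsilon v'$ lies in $A$ for $\epsilon>0$ (trivial valuation), and $v+\epsilon v'\in A$ converges to $v$, so $v\in\overline A$.

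The crux is the sub-claim that \emph{any closed convex cone $C\subseteq V$ with $C\cup-C=V$ is either $V$ itself or a closed half-space}. Passing to the quotient $V/L$ by the lineality space $L=C\cap -C$, one obtains pointed closed convex cones $C/L$ and $-C/L$ that intersect only at $0$ and whose union is $V/L$. Intersecting with a Euclidean sphere in $V/L$ yields two disjoint nonempty closed sets partitioning the sphere; connectedness of $S^{d-1}$ for $d=\dim V/L\geq 2$ forces $d\leq 1$, so $C$ is $V$ or the preimage under $V\to V/L\cong\R$ of a closed half-line, i.e.\ a closed half-space. Applied to $C=\overline A$, the case $\overline A=V$ is ruled out: since $A^\circ$ and $B^\circ=-A^\circ$ are nonempty open sets with $A\cap B=\emptyset$, density of $A$ in $V$ would force $A\cap B^\circ\neq\emptyset$, a contradiction. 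Hence $\overline A=\{v:\langle v,w\rangle\geq 0\}$ for some nonzero $w\in V^*$, and the standard convex-analysis identities $(\overline A)^\circ=A^\circ$ and $\partial A=\overline A\setminus A^\circ$ (valid for any convex set with nonempty interior) yield the descriptions of $A^\circ$, $\overline A$, and $\partial A$. The main obstacle is this sub-claim, which ultimately relies on connectedness of Euclidean spheres---precisely the general hyperplane separation available only over $\R$, as noted in Example \ref{ex:not_diagonalizable} and \cite{robson_separating}.
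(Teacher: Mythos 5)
Your proofs of parts (a), (b), (d), (e) are essentially equivalent to the paper's: for (a) the paper invokes Proposition \ref{prop:building_trivial_valuation} (which itself rests on the diagonalizability you cite); for (b) the paper applies Lemma \ref{lem:convex_cone} to $[a-\varepsilon,a+\varepsilon]$ for small $\varepsilon$ rather than the degenerate interval $[a,a]$ — your choice is slightly more direct and both yield the same conclusion given finiteness of the image from (a); and the paper simply declares (d), (e) straightforward.

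For the main part (c), you take a genuinely different route. The paper treats the hyperplane separation theorem (citing Stoer--Witzgall 3.3.9) as a black box: $A$ and $-A$ are convex, so there is a separating hyperplane $w^\perp$, giving one inclusion $A^\circ \subseteq \{\langle \cdot, w\rangle > 0\}$, and then the reverse inclusion is deduced from the explicit flag description of Proposition \ref{prop:building_trivial_valuation}. You instead re-derive the half-space structure from first principles: you first establish $\Span(A)=V$ and $\overline A \cup -\overline A = V$ via Lemma \ref{lem:rulesSignedSeminorms}(b), then prove the structural sub-claim (a closed convex cone whose union with its negative is $V$ is either $V$ or a closed half-space) by quotienting by the lineality space and invoking connectedness of the Euclidean sphere, and finally rule out $\overline A = V$ using disjointness of $A^\circ$ and $-A^\circ$. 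This is correct and self-contained; its chief virtue is that it isolates precisely where $K=\R$ is used (connectedness of $S^{d-1}$, the topological content behind general hyperplane separation), which the paper only notes informally after the proof. The cost is length: the paper's citation-based argument is shorter and offloads the topology to the reference. Your derivation of the $A^\circ$ and $\partial A$ formulas from $\overline A$ via the standard identities $(\overline A)^\circ = A^\circ$ and $\partial A = \overline A \setminus A^\circ$ is cleaner than the paper's terse ``the second equality follows from the first.''
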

\begin{proof}
\begin{enumerate}[(a)]
\item Follows immediately from Proposition \ref{prop:building_trivial_valuation}.
\item Follows from Lemma \ref{lem:convex_cone} applied to the interval $[a-\varepsilon, a+\varepsilon ]$ for a sufficiently small $\varepsilon>0$.
\item The second equality of sets follows from the first one. Since $A$ is convex, so is $-A$. By the separation Lemma \cite[3.3.9]{stoer2012convexity}, there is a hyperplane $H$ separating $A$ and $-A$. Let $w\in V^{\ast}$ such that $H= w^{\perp}$ and $A \subseteq \{v \in V \mid \langle v,w \rangle \geq 0\}$. Then $A^{\circ} \subseteq \{v \in V \mid \langle v,w \rangle > 0\}$. From Proposition \ref{prop:building_trivial_valuation}, one can deduce the other inclusion.
\end{enumerate}
We omit the proofs of (d) and (e) as these are straightforward.
\end{proof}

We note that $K = \R$ is the only real closed field that admits the separation Lemma \cite[3.3.9]{stoer2012convexity}. Thus for an arbitrary real closed field $K$ we cannot expect a hyperplane $\langle \cdot,w \rangle = 0$ separating $A$ and $-A$ to exist.

This now allows us to proof, that every signed seminorm is diagonalizable:

\begin{proposition}\label{prop:norm_diagonalizable}
Let $K=\R$. Every signed seminorm $\siinorm{\cdot}$ on $V$ is of the form $\siinorm{\cdot}_{B,\vec{c}}$ for an ordered basis $B = (b_1,\dots,b_n)$ and parameters $c_1 \geq c_2 \geq \dots \geq c_n \in \R_{\geq 0}$.
\end{proposition}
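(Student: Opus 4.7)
The plan is to argue by induction on $n=\dim V$, with the geometric input from Lemma \ref{lem:properties_siinorms}(c) (which in turn relies on hyperplane separation, the only place where we really need $K=\R$) being the key to producing the distinguished basis vector $b_1$.

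For the base case $n=1$, a signed seminorm on $\R$ is determined by $\siinorm{1}$: if $\siinorm{1}\geq 0$, we take $b_1=1$, $c_1=\siinorm{1}$, and otherwise we take $b_1=-1$, $c_1=-\siinorm{1}$, using $\siinorm{\lambda}=\sgn(\lambda)\siinorm{1}$ for $\lambda\neq 0$ (as the absolute value is trivial).

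For the inductive step, set $a=\max_{v\in V}\siinorm{v}$, which exists by Lemma \ref{lem:properties_siinorms}(a). If $a=0$ then both $\siinorm{\cdot}\leq 0$ and $\siinorm{\cdot}=-\siinorm{-\cdot}\geq 0$, so $\siinorm{\cdot}\equiv 0$ and any basis with $\vec{c}=\vec{0}$ works. Otherwise, let $A=\{v:\siinorm{v}=a\}$ and pick $w\in V^{\ast}$ as in Lemma \ref{lem:properties_siinorms}(c). Choose any $b_1\in A^{\circ}=\{\langle\cdot,w\rangle>0\}$, so $\siinorm{b_1}=a$, and set $c_1=a$. Let $W=\ker(w)$, which has dimension $n-1$ and satisfies $V=\R b_1\oplus W$. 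By Lemma \ref{lem:properties_siinorms}(e) the restriction $\siinorm{\cdot}|_W$ is a signed seminorm, so the induction hypothesis furnishes an ordered basis $(b_2,\ldots,b_n)$ of $W$ and parameters $c_2\geq\cdots\geq c_n\geq 0$ that diagonalize it. Since $c_2=\max_{v\in W}\siinorm{v}\leq a=c_1$, the ordering condition on $\vec{c}=(c_1,\ldots,c_n)$ is satisfied.

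It remains to verify $\siinorm{v}=\siinorm{v}_{B,\vec{c}}$ for every $v=\lambda_1 b_1 + v'\in V$ with $v'\in W$. When $\lambda_1=0$ this is the inductive hypothesis applied on $W$. When $\lambda_1\neq 0$, let $m=\max_{i\geq 2,\,\lambda_i\neq 0}c_i=\bigl|\siinorm{v'}\bigr|$ (the latter equality by induction). If $m<a$, then $\bigl|\siinorm{\lambda_1 b_1}\bigr|=a>m=\bigl|\siinorm{v'}\bigr|$, so Lemma \ref{lem:rulesSignedSeminorms}(b) gives $\siinorm{v}=\siinorm{\lambda_1 b_1}=\sgn(\lambda_1)\,a$, which agrees with $\siinorm{v}_{B,\vec{c}}$ since the index $j=1$ is minimal attaining the maximum $|\lambda_j|c_j=a$. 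If $m=a$, then $\siinorm{v}_{B,\vec{c}}=\sgn(\lambda_1)\,a$ (again because $j=1$ is minimal), and on the other hand $\langle v,w\rangle=\lambda_1\langle b_1,w\rangle$ has the sign of $\lambda_1$, since $b_1\in A^{\circ}$. Thus $v\in A^{\circ}\cup(-A^{\circ})$ according to $\sgn(\lambda_1)$, which forces $\siinorm{v}=\sgn(\lambda_1)\,a$ by Lemma \ref{lem:properties_siinorms}(c).

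The main obstacle is precisely the last subcase: when $\bigl|\siinorm{\lambda_1 b_1}\bigr|=\bigl|\siinorm{v'}\bigr|=a$ Lemma \ref{lem:rulesSignedSeminorms}(b) gives no information, and the strong triangle inequalities alone only constrain $\siinorm{v}$ to the interval $[-a,a]$. It is exactly here that the $\R$-specific hyperplane separation embedded in Lemma \ref{lem:properties_siinorms}(c) is indispensable, letting us read off the sign of $\siinorm{v}$ from $\sgn\langle v,w\rangle$. This step simultaneously explains why the argument breaks down for other real closed fields, in agreement with Example \ref{ex:not_diagonalizable}.
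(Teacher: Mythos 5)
Your proof is correct and follows essentially the same strategy as the paper's: induct on $\dim V$, use Lemma~\ref{lem:properties_siinorms}(c) (hyperplane separation, the only genuinely $\R$-specific ingredient) to extract a distinguished vector $b_1\in A^\circ$ with $\siinorm{b_1}=a$ and the complementary hyperplane $H=\ker w$, restrict the signed seminorm to $H$ (the paper cites part (d) of that lemma here, but part (e) — which you cite — is the one actually being used), and apply the inductive hypothesis there. Your case analysis in the verification step is slightly more detailed than the paper's (and the $m<a$ subcase is in fact subsumed by the $m=a$ argument, since the reasoning via $\sgn\langle v,w\rangle$ applies whenever $\lambda_1\neq 0$), but the content is identical.
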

\begin{proof}
The trivial signed seminorm is diagonalizable by any basis setting $\vec{c}=0$. Let $\siinorm{\cdot}$ be non-trivial. As in Lemma \ref{lem:properties_siinorms}, let $a=\max_{v\in V} ||a||^{\sgn}$, $A\coloneqq \{v \in V \mid ||v||^{\sgn}=a\}$, and $H=\partial A$. Set $b_1$ to be any vector in $A^{\circ}$ and $c_1\coloneqq a$. By Lemma \ref{lem:properties_siinorms} (d), the restriction $\siinorm{\cdot} |_H$ is a signed seminorm and therefore, by induction, it is diagonalizable with $b_2,\dots,b_n \in H$, and $c_2,\dots,c_n \in \R$ with $c_2 \geq \dots \geq c_n$. Then $(b_1,\dots,b_n)$ is a basis of $V$, by definition $c_1 \geq c_2 \geq \dots \geq c_n$, and for any $v=\sum_{i=1}^n \lambda_i b_i$ we have that $v \in H$ if and only if $\lambda_1=0$. Moreover, we have $v \in A$ if and only if $\lambda_1>0$, and $v \in -A$ if and only if $\lambda_1<0$. In all three cases we have $\siinorm{v}=\siinorm{v}_{B,c}$.
\end{proof}

\begin{remark}
The key ingredient for Proposition \ref{prop:norm_diagonalizable} is hyperplane separation for general convex sets, which, as noted above, only holds over $\R$. It is currently not clear to the authors, if there are other real fields, trivially or non-trivially valued, for which every signed seminorm is diagonalizable.
\end{remark}

Diagonalizability gives us a description of $\RX(V)$ in the flavor of Proposition \ref{prop:building_trivial_valuation}.

\begin{definition}
A \emph{signed flag} of subspaces is given by the data of
\begin{itemize}
\item a flag $0\subseteq V_0 \subsetneq V_1 \subsetneq \dots \subsetneq V_l=V$ of subspaces such that for all $i=1,\dots,l$ we have $\dim V_{i-1} = \dim V_{i}-1$, and
\item a choice of a convex region in $V_i\setminus V_{i-1}$ for all $i=1,\dots, l$.
\end{itemize}
\end{definition}
We consider two signed flags \emph{equivalent}, if the underlying flag of subspaces is the same and the choice of region is exactly opposite for all $i$. The notion of signed flags helps us to formulate a signed analogue of Proposition \ref{prop:building_trivial_valuation}.

\begin{proposition}\label{prop:normsFromFlags}
Let $V$ be of dimension $n$. There is a bijection
\begin{align*}
\RX(V) \stackrel{1:1}{\longleftrightarrow} \big\{\big(0\subseteq V_0 \subsetneq V_1 \subsetneq \dots \subsetneq V_l=V^*,\ 0<d_1\leq \dots\leq d_{l-1} \leq 1\big)\big\},
\end{align*}
where the right hand side runs over all equivalence classes of signed flags.
\end{proposition}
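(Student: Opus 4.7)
The proof relies on Proposition \ref{prop:norm_diagonalizable}, which asserts that every signed seminorm over $K = \R$ is diagonalizable. We exhibit explicit mutually inverse maps between $\RX(V)$ and equivalence classes of signed flags with weights.

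Given a signed flag with weights $0 < d_1 \leq \dots \leq d_{l-1} \leq d_l := 1$, we pick for each $i = 1, \dots, l$ a vector $b_i \in A_i$ whose image generates the one-dimensional quotient $V_i/V_{i-1}$ and extend with a basis of $V_0$ to an ordered basis $B = (b_l, b_{l-1}, \dots, b_1, \dots)$ of $V^*$. With $\vec{c} = (d_l, d_{l-1}, \dots, d_1, 0, \dots, 0)$, Example \ref{ex:diagonalizable_signed_seminorms} yields a signed seminorm $\siinorm{\cdot}_{B, \vec{c}}$. It admits the intrinsic description: $\siinorm{v} = 0$ if $v \in V_0$, and otherwise $|\siinorm{v}| = d_{i(v)}$ with sign $+$ precisely when the image of $v$ in $V_{i(v)}/V_{i(v)-1}$ lies on the side of $A_{i(v)}$, where $i(v) = \min\{i : v \in V_i\}$. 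This formula makes manifest that the seminorm depends only on the signed flag and not on the choices of $b_i \in A_i$ or of a basis of $V_0$.

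Conversely, given a non-trivial $[\siinorm{\cdot}] \in \RX(V)$, we select a representative with $\max_v |\siinorm{v}| = 1$, unique up to sign. We build the flag top-down: set $V_l = V^*$, $d_l = 1$, and $A_l = \{v \in V^* : \siinorm{v} = 1\}$. By Lemma \ref{lem:properties_siinorms}(b)--(c), $\overline{A_l}$ is a closed half-space of $V^*$ whose boundary is a hyperplane $V_{l-1} \subset V^*$. Lemma \ref{lem:properties_siinorms}(e) ensures $\siinorm{\cdot}|_{V_{l-1}}$ is again a signed seminorm, so we iterate with $d_{l-1} = \max_{v \in V_{l-1}} |\siinorm{v}|$, and so on, terminating at $V_0 = \ker \siinorm{\cdot}$. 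By construction, each step drops the dimension by exactly $1$ and the resulting weights satisfy $0 < d_1 \leq \dots \leq d_l = 1$.

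The two maps are mutually inverse: for flag $\to$ seminorm $\to$ flag, an induction on $l$ shows that the top-level positive region of $\siinorm{\cdot}_{B, \vec{c}}$ is exactly $A_l$ with boundary $V_{l-1}$; for seminorm $\to$ flag $\to$ seminorm, the intrinsic formula above immediately reproduces $\siinorm{\cdot}$ level-by-level. The two representatives $\pm \siinorm{\cdot}$ in a homothety class produce flags related by the simultaneous flip $A_i \leftrightarrow -A_i$, accounting for the stated equivalence of signed flags. The main obstacle lies in the recursive step of the seminorm-to-flag construction, namely the fact that the boundary of the top-level positive region is a genuine hyperplane; this is exactly Lemma \ref{lem:properties_siinorms}(c), which rests on hyperplane separation in $\R$-vector spaces and is specific to $K = \R$ (see Example \ref{ex:not_diagonalizable}).
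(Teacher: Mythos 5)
Your proof is correct and follows essentially the same strategy as the paper: both directions hinge on Proposition~\ref{prop:norm_diagonalizable} and the half-space structure from Lemma~\ref{lem:properties_siinorms}, with the flag read off from a diagonalizing basis and the weights from the coordinate vector $\vec c$. Your recursive top-down construction for the seminorm-to-flag direction is in effect the proof of Proposition~\ref{prop:norm_diagonalizable} inlined, and your intrinsic description of $\siinorm{\cdot}_{B,\vec c}$ in terms of $i(v)$ and the regions $A_i$ is a clean way to justify independence of choices, matching the paper's ``note that both the choice of the other representative or of a different diagonalizing basis would yield an equivalent signed flag.''
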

\begin{proof}
For a point in $\RX(V)$, we first choose a representative $\siinorm{\cdot}$ such that its maximal value is $1$. Now, by Proposition \ref{prop:norm_diagonalizable} $\siinorm{\cdot}=\siinorm{\cdot}_{B,\vec{c}}$ for an ordered basis $B=(b_1,\dots,b_n)$ and $\vec{c}=(c_1=1,c_2,\dots,c_n)\in \R_{\geq 0}^n$ with $c_i\geq c_{i-1}$ for all $i$. Let $i_0 = \min\{i=1,\dots,n+1 \mid c_i=0\}$, where we formally set $c_{n+1}=0$. We define a signed flag by setting $V_0=\langle b_{i_0},\dots,b_n\rangle$, $V_i=\langle b_{i_0-i},\dots, b_n\rangle$, and chose the region of $V_i$ that contains $b_i$. Note that both the choice of the other representative or of a different diagonalizing basis would yield an equivalent signed flag. Moreover, $\vec{c}$ is independent of both choices. Finally, we set $d_i \coloneqq  c_{i_0-i}$ for $i=1,\dots,i_0-2$ to obtain the desired data. 

The above procedure is reversible by successively constructing a basis and the homothety class of the resulting signed seminorm does not depend on the choice of the constructed basis.
\end{proof}

Together, Proposition \ref{prop:building_trivial_valuation} and Proposition \ref{prop:normsFromFlags} allow us to study the natural map $\Phi: \RX(V)\to \calBbar(V)$. We want to understand $\Phi([\siinorm{\cdot}])$ for some $[\siinorm{\cdot}]\in \RX(V)$. Let $0\subseteq V_0 \subsetneq V_1 \subsetneq \dots \subsetneq V_l=V^*$ be the corresponding signed flag and $0\leq d_1\leq \dots\leq d_{l-1} \leq 1$ constructed as in Proposition \ref{prop:normsFromFlags}. Let $i_0\coloneqq  0$ and $0<i_1\leq \dots \leq i_{l'} <l$ be the indices such that there is a strict inequality $d_{i_j}<d_{i_j+1}$ (here we set and $d_l \coloneqq  1$). Set $V'_j\coloneqq  V_{i_j}$ for $j=1,\dots,l$ and $d'_j=d_{i_j}$. Then $\Phi\left(\big[\siinorm{\cdot}\big]\right)$ is given by the flag $0=V'_0 \subsetneq V'_1 \subsetneq \dots \subsetneq V'_{l'} \subsetneq V'_{l'+1} \coloneqq  V^*$ and coordinates $0\leq d'_1<\dots< d'_l<1$ as in Proposition \ref{prop:building_trivial_valuation}.

A natural question to ask is: what do the fibers of $\Phi$ look like? This has an easy answer, if the flag that is induced by the homothety class of a seminorm is complete, \emph{i.e.}\ jumps of subspaces are only by one dimension. In that case, all preimages are given by the same coordinates $d_i$ and any choice of signature on the flag yields a signed seminorm, hence the fiber under $\Phi$ has exactly $2^{n-1}$ elements. The situation gets more complicated, if the flag is not complete, as the following example will show.

\begin{example}\label{ex:fibersPhi}
Consider the case $V=\R^2$. As explained in Example \ref{example:calBbar(K^2)}, there are three types of points of $\calBbar(V)$:
\begin{enumerate}[(a)]
\item the homothety class $\eta$ of the constant norm,
\item classes of proper norms with $2$ different non-zero values,
\item classes of proper seminorms (\emph{i.e.}, seminorms with non-trivial kernel).
\end{enumerate}
If $K$ was algebraically closed, we would have $\calBbar_1(K)=\P^{1,\an}$ and these would exactly correspond to Berkovich type II, type III, respectively type I points of the space. However, $\R$ is not algebraically closed and thus $\calBbar_1(\R) \subsetneq \P^{1,\an}$.

As discussed before, if a point in $\calBbar_1(\R)$ is of type (b), then its fiber consists of two points. If the point is of type (c), then there is only one preimage, as the two choices of signed flags are equivalent. In contrast, the fiber of $\eta$ is quite large. For every line $V_1 \subset \R^2$, there are two equivalence classes of signed flags with underlying flag $0 \subsetneq V_1 \subsetneq \R^2$. Hence, there is a (non-canonical) bijection
$$\Phi^{-1}(\eta) \cong \P^1(\R) \sqcup \P^1(\R)\, .$$
Note that this bijection is not continuous, so it does not tell us anything about the topology. To find out more about the topology of this fiber we refer to \cite[Example 3.12]{JSY22}, since in the special case of dimension $2$, one can show that $\calBbar_1(\R)$ is the set of real points of $\P^{1,\an}$ and $\RX_1(\R)$ equals $\P^{1,\an}_r$. In Figure \ref{fig:Phi} we sketch the map $\Phi$. Note that because of the infinite nature of the spaces, the topology on both $\RX_1(\R)$ and $\calBbar_1(\R)$ is coarser than the figure might lead us to believe. For example, the space $\RX_1(\R)$ is connected (this follows from \ref{prop:tau_surjective}), even though as a set it is a disjoint union of closed intervals. 

\begin{figure}[h]
\begin{center}
\begin{tikzpicture}[scale=.5]

\draw[dotted](0,0) -- (0,7);
\draw[black] (0,3.5)--(-3,2.018)--(0,4);
\draw[black] (0,4.5)--(3,3.018)--(0,5);
\draw[black] (0,5.5)--(6,5.75)--(0,6);
\draw[black] (0,6.5)--(3,8.482)--(0,7);
\draw[black] (0,7.5)--(-6,7.75)--(0,8);




\draw[dotted] (0,0) ellipse (6 and 2);
\fill[black] (0,0) circle (2pt);

\draw[black] (3,1.732)--(-3,-1.732);
\draw[black] (-6,0)--(6,0);
\draw[black] (0,0)--(3,-1.732);

\draw[very thick, dotted] (-0.7,1.2) arc (90:150:1.5) ;

\end{tikzpicture}
\caption{The map $\Phi: \RX_1(\R)\to \calBbar_1(\R)$.}
\label{fig:Phi}
\end{center}
\end{figure}
\end{example}

\begin{example}
The computation of Example \ref{ex:fibersPhi} allows us to compute some fibers in the case where $K=\R\{\!\{t\}\!\}$ is the field of real Puiseux series. Consider the class $\eta$ of the Gauss norm $K^2\to \R,\ (x,y) \mapsto \max\{|x|,|y|\}$ in $\calXbar_1(K)$. We claim that just like in the case of real numbers with trivial valuation we have:
\[\Phi^{-1}(\eta) \cong \P^1(\R) \sqcup \P^1(\R) \, .\]
Let $\siinorm{\cdot}$ be a representative of a class in the preimage of $\Phi$ such that $|\siinorm{(x,y)}| = \max\{|x|,|y|\}$. Then surely, this signed norm is uniquely determined by its values on the unit sphere of $K^2$ with respect to the Gauss norm. For any element $u=(x,y) \in K^2$ with $\max\{|x|,|y|\}=1$, consider its reduction $\overline{u}=(\overline{x},\overline{y})\in \R^2$ modulo $t$, then by Lemma \ref{lem:rulesSignedSeminorms} (b), $\siinorm{u}= \siinorm{\overline{u}}$. Therefore, $\sgn(\siinorm{u})$ is uniquely determined by its reduction modulo $t$, i.e., by the restriction to $\R^2 \subset K^2$, and these restrictions are classified in Example \ref{ex:fibersPhi}. Vice versa, it is easy to see that any choice of restriction lifts uniquely to a signed seminorm on $K^2$.
\end{example}

\begin{proposition}\label{prop:tau_surjective}
The restriction map $\tau: \P_r^{n,\an} \xrightarrow{}  \RX_n(\R)$ is surjective.
\end{proposition}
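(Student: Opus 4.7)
The plan is to construct, for each $[\siinorm{\cdot}] \in \RX_n(\R)$, an explicit lift to $\P_r^{n,\an}$. By Proposition~\ref{prop:norm_diagonalizable} we may choose a representative $\siinorm{\cdot}=\siinorm{\cdot}_{B,\vec c}$ for an ordered basis $B=(b_1,\dots,b_{n+1})$ of $(\R^{n+1})^\ast$ and weights $c_1\geq\dots\geq c_{n+1}\geq 0$ with $c_1>0$ (by non-triviality). Identifying $\R[t_0,\dots,t_n]$ with $\R[b_1,\dots,b_{n+1}]$, the task becomes to extend $\siinorm{\cdot}_{B,\vec c}$ from the degree-one part to a signed multiplicative seminorm on the entire polynomial ring.

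Let $j\in\{1,\dots,n+2\}$ be minimal with $c_j=0$ (set $j=n+2$ if all $c_i$ are positive), and let $\pi\colon\R[b_1,\dots,b_{n+1}]\twoheadrightarrow \R[b_1,\dots,b_{j-1}]$ be the ring map sending $b_i\mapsto 0$ for $i\geq j$. On $\R[b_1,\dots,b_{j-1}]$ equip the exponents with the monomial order
\[
\alpha \prec \beta \ :\Longleftrightarrow\ \prod_{i=1}^{j-1}c_i^{\alpha_i} < \prod_{i=1}^{j-1}c_i^{\beta_i}, \text{ or equality combined with } \alpha<_{\mathrm{lex}}\beta.
\]
As every $c_i$ for $i<j$ is positive, $\prec$ is a multiplicative monomial order. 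For $0\neq g=\sum_\beta\mu_\beta b^\beta$ let $\beta^\ast(g)$ denote its $\prec$-largest exponent with $\mu_{\beta^\ast(g)}\neq 0$, and set
\[
\sinorm{g}_0:=\sgn(\mu_{\beta^\ast(g)})\prod_{i=1}^{j-1}c_i^{\beta^\ast(g)_i},\qquad \sinorm{0}_0:=0.
\]
Finally define $\sinorm{f}:=\sinorm{\pi(f)}_0$ for $f\in\R[b_1,\dots,b_{n+1}]$.

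The verification that $\sinorm{\cdot}$ is a signed multiplicative seminorm on $\R[t_0,\dots,t_n]$ is by case analysis. The identity $\sinorm{a}=\sgn(a)|a|_{\R}=\sgn(a)$ for $a\in\R^\ast$ is immediate, since constants are fixed by $\pi$ and the absolute value on $\R$ is trivial. Multiplicativity follows because $\pi$ is a ring homomorphism and, under a multiplicative monomial order, the leading exponent of a product is the sum of the leading exponents with product of leading coefficients. For the signed triangle inequalities on $\sinorm{g_1+g_2}_0$ there are three cases: (i) if $\beta^\ast(g_1)\neq\beta^\ast(g_2)$ the larger exponent dominates and $\sinorm{g_1+g_2}_0$ equals the corresponding $\sinorm{g_i}_0$; (ii) if they agree with like-sign leading coefficients, the sum preserves the leading term; (iii) if they agree with opposite-sign leading coefficients, cancellation in the top coefficient forces $\beta^\ast(g_1+g_2)\prec\beta^\ast(g_1)$, whence $\big|\sinorm{g_1+g_2}_0\big|<\max(\big|\sinorm{g_1}_0\big|,\big|\sinorm{g_2}_0\big|)$, and Lemma~\ref{lem:rulesSignedSeminorms} supplies the required bounds between $-\max$ and $\max$. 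Restriction to a linear form $f=\sum\lambda_i b_i$ picks out $b_{i^\ast}$ with $i^\ast=\min\{i:\lambda_i\neq 0,\ c_i>0\}$ (as the $c_i$ are weakly decreasing and lex favors smaller indices), which exactly matches the formula of Example~\ref{ex:diagonalizable_signed_seminorms}.

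Since $\sinorm{b_1}=c_1>0$, the homothety class $[\sinorm{\cdot}]$ is a well-defined point of $\P_r^{n,\an}$ mapping to $[\siinorm{\cdot}]$ under $\tau$, establishing surjectivity. The main conceptual obstacle is compatibility of the monomial order with signs in the opposite-sign cancellation case; this is absorbed cleanly by the multiplicativity of $\prec$, which guarantees that coefficient cancellation at the leading term strictly reduces the leading exponent, so the signed triangle inequality becomes automatic from the absolute-value version.
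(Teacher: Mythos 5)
Your construction is essentially the same as the paper's: extend the diagonalizable signed seminorm $\siinorm{\cdot}_{B,\vec{c}}$ on $(\R^{n+1})^\ast\cong\R[t_0,\dots,t_n]_1$ to the whole polynomial ring by reading off the sign and $c$-weight of the leading term with respect to a weighted-with-lex-tiebreak monomial order. One genuine improvement in your version is the explicit treatment of the degenerate case $c_i=0$ via the projection $\pi$ onto $\R[b_1,\dots,b_{j-1}]$: the paper normalizes ``after scaling $\vec{c}$, we can assume $c_n=1$'', which is impossible when the signed seminorm has nontrivial kernel (only $c_0=1$ can always be arranged), and with zero weights the paper's order $\geq_{\vec{c}}$ is not literally multiplicative. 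Your projection restores a genuine monomial order and makes multiplicativity and the degree-one restriction check transparent.

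A small imprecision in case (iii): $\beta^\ast(g_1+g_2)\prec\beta^\ast(g_1)$ only yields $c^{\beta^\ast(g_1+g_2)}\leq c^{\beta^\ast(g_1)}$ (the tie may be broken lexicographically), not the strict inequality you assert, and Lemma~\ref{lem:rulesSignedSeminorms} is a statement about signed seminorms on vector spaces rather than what is needed here. The conclusion is nonetheless correct and more direct: since $\sinorm{g_1}_0=-\sinorm{g_2}_0$ and $\bigl|\sinorm{g_1+g_2}_0\bigr|\leq\bigl|\sinorm{g_1}_0\bigr|$, one has $\sinorm{g_1+g_2}_0\in\bigl[\min(\sinorm{g_1}_0,\sinorm{g_2}_0),\max(\sinorm{g_1}_0,\sinorm{g_2}_0)\bigr]$ immediately.
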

\begin{proof}
Let $\big[\siinorm{\cdot}\big]$ be the class of a non-trivial signed seminorm on $(\R^{n+1})^{\ast}$. Then by Proposition \ref{prop:norm_diagonalizable} there is an ordered basis $B = (b_0,\dots,b_n)$ and $\Vec{c} \in \R^{n+1}$ with $c_0 \geq c_1 \geq \dots \geq c_n \in \R_{\geq 0}$ such that $\siinorm{\cdot} = \siinorm{\cdot}_{B,\Vec{c}}$. After scaling $\Vec{c}$, we can assume $c_n = 1$.

We define a signed multiplicative seminorm $\sinorm{\cdot}$ on $\R[t_0,\ldots,t_n]$ as follows. After a coordinate change we can write an element $f\in \R[t_0,\dots,t_n]$ as 
$$f=\sum_{I=(i_0,\dots,i_n)} a_I b_0^{i_0}\cdot \ldots\cdot b_n^{i_n}\, .$$
Using the lexicographic order $\geq_{\text{lex}}$ we define a monomial order as follows. For $I, J \in \N^n$ we define $I \geq_{\Vec{c}} J$ if the two conditions are satisfied:

\begin{itemize}
\item $c^{I} \geq c^J$  
\item $c^{I} = c^J$ implies $I \geq_{\text{lex}} J$.
\end{itemize}

Let $I_0$ be the leading monomial of $f$ with respect to this order and $a_{I_0} b^{I_0}$ the corresponding leading term. We define $|f|^{\sgn} = \sgn(a_{I_0}) c^{I_0}$. This is a signed multiplicative seminorm on $\R[t_0,\ldots,t_n]$. Indeed, if $f \in \R$, then $|{f}|^{\sgn} = \sgn(f) = \sgn(f) \cdot |f|_{\triv}$. Let $f, g \in \R[t_0,\ldots,t_n]$ and write $f=\sum_{I=(i_0,\dots,i_n)} a_I b_0^{i_0}\cdot \ldots\cdot b_n^{i_n}$ and $g =\sum_{I=(i_0,\dots,i_n)} a'_I b_0^{i_0}\cdot \ldots\cdot b_n^{i_n}$. Let $a_{I_0} b^{I_0}$ and $a'_{I_1} b^{I_1}$ be the corresponding leading terms. Since $\geq_{\Vec{c}}$ is a monomial order the leading term of $fg$ is $a_{I_0}a'_{I_0} b^{I_0 + I_1}$. This shows $|fg|^{\sgn} = |f|^{\sgn} |g|^{\sgn}$. We omit the verification of the strong triangle inequalities. Clearly, $\sinorm{\cdot}$ restricts to $\siinorm{\cdot}_{B,\Vec{c}}$ on $\R[t_0,\ldots,t_n]_1 \cong (\R^{n+1})^{\ast}$ and the homothety class of $\sinorm{\cdot}$ only depends on the homothety class of $\siinorm{\cdot}$. Thus, we have $\tau \big[\sinorm{\cdot}\big] = \big[\siinorm{\cdot}\big]$.
\end{proof}

\begin{remark}
With minor amendments, the above proof can be generalized to any non-Archimedean valued real closed field to show that each class of a diagonalizable signed seminorm has a preimage. In particular, the image of the natural map $\tau: \P_r^{n,\an}\to \RX_n(K)$ always contains the diagonalizable locus.
\end{remark}

\section{Real Bergman Fans}\label{section:realbergmanfans}

The tropical linear space associated to a matroid has the structure of a fan, called the \emph{Bergman fan}. Recall that to each oriented valuated matroid one can associate a real tropical linear space. If the valuation is trivial, \emph{i.e.}, we have an oriented matroid, the corresponding fan is called the \emph{real Bergman fan}. This construction in the case of finite ground sets was introduced in \cite[Chapter 2.3]{celaya-diss}. We will first discuss circuit axiomatization of infinite oriented matroids and then recall the notion of \emph{covectors} of an oriented matroid, which are then used to define its real Bergman fan. In this section, we consider $K$ to be a real closed and trivially valued field.

\subsection{Infinite Oriented Matroids}
A \emph{signed subset} is a function $C:E\to \{0,+1,-1\}$. We denote by $C^+\coloneqq \{e\in E \mid C(e)=+1\},\ C^-\coloneqq \{e\in E\mid C(e)=-1\}$, $\Supp(C)\coloneqq C^+\cup C^-$, and $C^0 \coloneqq E \setminus \Supp(C)$. The \emph{separation set} of two signed sets $C,C'$ is defined by $S(C,C')\coloneqq \{e\in E \mid C(e)=-C'(e)\neq 0\}$. 

We quickly discuss the circuit axiomatization of infinite oriented matroids of finite rank. Fix a possibly infinite ground set $E$. Recall that an oriented matroid 
is given by a chirotope (\emph{i.e.}, a Grassmann--Plücker function) $\varphi: E^n\to \mathbb{S}=\{0,+1,-1\}$. Equivalently, there is the same circuit axiomatization as in Proposition \ref{prop:circuit_description} (replacing $\R\T$ with $\mathbb{S}$), with one additional axiom, which ensures finite rank:
\begin{itemize} 
\item[\textbf{(C4)}] There is a positive integer $n \in \N$ such that for all subsets $A\subseteq E$ with $|A|>n$ there exists $C\in \mathcal{C}$ with $\Supp(C)\subseteq A$.
\end{itemize}

By restricting and applying the finite case (\emph{e.g.}, \cite[Theorem 3.2.5]{orientedMatroidsBook}) one obtains the \emph{strong circuit elimination}:
\begin{itemize}
\item[\textbf{(C3')}] For all $C\neq -C'\in \mathcal{C}$, $e \in S(C,C')$, and $f\in (C^+ \setminus C'^-)\cup (C^-\setminus C'^+)$, there is $C'' \in \mathcal{C}$ such that $C''(e)=0$, $f\in \Supp(C'')$, and
\begin{align*}
C''^+ \subseteq(C^+\cup C'^+), \quad C''^- \subseteq(C^-\cup C'^-).
\end{align*}
\end{itemize}
As a consequence, our notion of oriented matroids agrees with the characterization given in \cite[Theorem 3]{buchiFenton} with the additional requirement of finite rank.

\subsection{Covectors}
Let $\mathcal{M}$ be an oriented matroid given by a chirotope $\varphi:E^n\to \{0,+1,-1\}$.

\begin{definition}\label{def:cocircuit}
The set of \emph{signed cocircuits} $\mathcal{C}^*$ of $\mathcal{M}$ is given by the set of signed sets
\begin{align*}
\mathcal{C}^*=\{\pm[e\mapsto \varphi(\mu,e)] \mid \mu \in E^{n-1}\}\setminus \{0\}\, .
\end{align*}
Note that the choice of the sign of the chirotope $\varphi$ does not change the set of signed cocircuits.
\end{definition}

\begin{remark}
In the case that $E$ is finite, we have that $\mathcal{C}^*$ is a set of signed circuits of an oriented matroid, which is called the \emph{dual} oriented matroid. As expected, dualizing commutes with taking the underlying matroid. A crucial difference is, that the dual of an infinite (oriented) matroid is not of finite rank and hence behaves inherently different from our case. Hence, we will not consider any duals of infinite matroids. 
\end{remark}

The \emph{composition} of two signed sets $C,C':E\to \{0,+1,-1\}$ is defined by
$$(C\circ C')(e)=\begin{cases}
C(e) \quad \text{ if } C(e)\neq 0,\\
C'(e) \quad \text{if } C(e)= 0.\\
\end{cases}$$
We write $C\leq C'$ if $C'^0 \subseteq C^0$ and $C'|_{\Supp C}=C|_{\Supp C}$. In other words, this partial order is induced by the partial order $0 < +1,-1$, considered coordinatewise.

\begin{definition}\label{def:Covectors}
\ 
\begin{enumerate}[(a)]
\item A \emph{vector} of $\mathcal{M}$ is a finite composition of signed circuits.
\item Dually, a \emph{covector} is a finite composition of cocircuits. We consider the set of covectors $\Cov$ as a partially ordered set with the usual ordering of signed sets.
\end{enumerate}
\end{definition}

If $E$ is finite, the covectors of the oriented matroid are exactly the vectors of its dual \cite[Definition 3.7.1]{orientedMatroidsBook}.

\begin{example}
Let $\mathcal{M}$ be realizable and $\phi: K^{(E)}\to K^n$ be the corresponding realization. Then the set of vectors is given by
$$\{[e\mapsto \sgn(\lambda_e)] \mid (\lambda_e)_{e \in E} \in \ker \phi \}\, .$$

For the covectors, consider the dual map $\phi^*: K^n\to K^E$. Then the set of covectors is given by compositions of elements of
$$\{\sgn \circ \, \phi^{\ast}(v) \mid v \in K^n \}\, .$$
\end{example}

\begin{proposition}[Covectors of $\mathcal{M}_{\univ}$] \label{prop:covectorsMuniv}
For $K=\R$ and the vector space $(\R^{n+1})^*$,
the following sets of maps $(\R^{n+1})^*\to \{0,+1,-1\}$ are the same:
\begin{enumerate}[(a)]
\item The covectors of $\mathcal{M}_{\univ}$.
\item Maps of the form $X:(\R^{n+1})^* \to \{0,+1,-1\},\ e\mapsto \sgn(\siinorm{e})$ for some signed seminorm $\siinorm{\cdot}$ on $\R^{n+1}$.
\item Signed seminorms with range $\{0,+1,-1\}$.
\item All maps such that $X^+,X^-$ are strictly convex cones without $0$ and $\emptyset \neq X^0\subseteq(\R^{n+1})^*$ is a subspace.
\end{enumerate}
\end{proposition}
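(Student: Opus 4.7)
The plan is to prove the three equivalences $(b) \Leftrightarrow (c)$, $(c) \Leftrightarrow (d)$, and $(c) \Leftrightarrow (a)$, using (c) as a convenient intermediary. The key enabling tool throughout is Proposition \ref{prop:norm_diagonalizable}: over $\R$ with trivial valuation, every signed seminorm on a finite-dimensional space is diagonalizable. For $(b) \Leftrightarrow (c)$, the inclusion $(c) \subseteq (b)$ is immediate. Conversely, given a signed seminorm $\siinorm{\cdot} = \siinorm{\cdot}_{B, \vec{c}}$, let $\vec{c}\,'$ be obtained from $\vec{c}$ by replacing every non-zero entry with $1$ (which preserves non-increasingness). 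Since $|\lambda|_{\triv} \in \{0,1\}$, one verifies that the smallest index attaining the maximum of $|\lambda_i|_{\triv} c_i$ always coincides with the smallest index attaining the maximum of $|\lambda_i|_{\triv} c'_i$, yielding $\sgn \circ \siinorm{\cdot}_{B,\vec{c}} = \siinorm{\cdot}_{B,\vec{c}\,'}$, which has range in $\{0, \pm 1\}$.

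For $(c) \Leftrightarrow (d)$: given $X$ from (c), Lemma \ref{lem:properties_siinorms} directly yields that $X^0 = \ker X$ is a subspace and $X^{\pm}$ are convex cones; strict convexity follows from $-X^+ = X^-$ combined with disjointness. Conversely, given data as in (d), we first force $-X^+ = X^-$: for $v \in X^+$, the alternative $-v \in X^0$ would force $v = -(-v) \in X^0$ since $X^0$ is a subspace, while $-v \in X^+$ would put the line through the origin and $v$ in $X^+$, violating strict convexity. This supplies the scaling axiom of Definition \ref{def:signedSeminorm}. The triangle inequality is then checked case-by-case on $(X(v), X(w)) \in \{0, \pm 1\}^2$; the representative non-trivial case is $X(v) = 0, X(w) = +1$, where assuming $X(v+w) = -1$ gives $-(v+w) \in X^+$, and convexity of $X^+$ then produces $-v = w + (-(v+w)) \in X^+$, contradicting $v \in X^0$.

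For $(c) \Leftrightarrow (a)$: expanding $\det[\mu_1, \ldots, \mu_n, e]$ along its final column realises it as a linear form $e \mapsto e(w_\mu)$ for a vector $w_\mu \in \R^{n+1}$, and every non-zero $w \in \R^{n+1}$ arises this way by taking $\mu_1, \ldots, \mu_n$ as a basis of the annihilator of $w$. Hence the cocircuits of $\mathcal{M}_{\univ}$ are exactly the maps $C_w \colon e \mapsto \sgn(e(w))$ with $w \in \R^{n+1} \setminus \{0\}$. A covector is a composition $C_{w_1} \circ \cdots \circ C_{w_k}$; discarding each $w_j$ lying in $\Span(w_1, \ldots, w_{j-1})$ (which never contributes, since any $e$ vanishing on the predecessors also vanishes on $w_j$) we may assume $(w_1, \ldots, w_k)$ is linearly independent and extend to a basis $(w_1, \ldots, w_{n+1})$ of $\R^{n+1}$. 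Letting $B$ be the dual basis in $(\R^{n+1})^*$ and $\vec{c} = (1, \ldots, 1, 0, \ldots, 0)$ with $k$ leading ones, a short computation verifies that the covector equals $\siinorm{\cdot}_{B, \vec{c}}$. Conversely, any signed seminorm with values in $\{0, \pm 1\}$ is diagonalisable with $c_i = \siinorm{b_i} \in \{0,1\}$, hence visibly of the form just described.

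I expect the main obstacle to lie in the bookkeeping for $(c) \Leftrightarrow (a)$: matching an arbitrary composition of cocircuits with diagonalising data requires both the predecessor-annihilation reduction to linearly independent tuples and the coincidence of the index realising the maximum under $\vec{c}$ and under its binarisation $\vec{c}\,'$. The whole argument rests heavily on diagonalisability, which is a special feature of $\R$ with trivial valuation (cf.\ Example \ref{ex:not_diagonalizable}); without it, the equivalence $(a) \Leftrightarrow (c)$ would be substantially more delicate.
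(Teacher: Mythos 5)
Your proof is correct and, like the paper's, rests on Proposition \ref{prop:norm_diagonalizable} (diagonalizability of signed seminorms over $\R$ with trivial valuation). The one place you deviate appreciably is the equivalence with (a). The paper shows $(a) \subseteq (b)$ by taking a cocircuit indexed by $\mu = (\mu_0,\ldots,\mu_{n-1})$, picking $\mu_n$ with $X(\mu_n) = +1$, and invoking multilinearity of the determinant to write it as $\sgn(\siinorm{\cdot}_{B,(1,0,\ldots,0)})$ for $B = (\pm\mu_n,\mu_0,\ldots,\mu_{n-1})$, then deducing that covectors are signed seminorms via closure under composition (Proposition \ref{prop:compSignedSeminorms}). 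You instead unpack the cocircuit directly as $C_w\colon e \mapsto \sgn(e(w))$ for some $w \in \R^{n+1}\setminus\{0\}$ (the Laplace expansion makes this transparent), prune a composition $C_{w_1}\circ\cdots\circ C_{w_k}$ to a linearly independent tuple via the predecessor-annihilation observation, and realize the covector as $\siinorm{\cdot}_{B,\vec{c}}$ where $B$ is dual to an extension of the $w_i$'s and $\vec{c}$ is a $0/1$-vector. This handles both inclusions symmetrically and makes the paper's "one can show" explicit. Your binarization step in $(b) \Leftrightarrow (c)$ and your explicit derivation of $-X^+ = X^-$ in $(d) \Rightarrow (c)$ spell out details the paper leaves terse (the paper dismisses $(b) \Leftrightarrow (c)$ by a normalization remark and asserts axiom (i) follows "immediately" from strict convexity). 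The net mathematical content agrees, but your version is more self-contained and constructive, at the cost of somewhat more bookkeeping.
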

\begin{proof}
\noindent
We first show that the functions in (b) and (c) are exactly the covectors of $\mathcal{M}_{\univ}$.
\begin{enumerate}[(a)]
\setcounter{enumi}{1}
\item Let $X:(\R^{n+1})^* \to \{0,+1,-1\}$ be a cocircuit given by $\mu=(\mu_0,\dots,\mu_{n-1}) \in ((\R^{n+1})^*)^{n}$. Choose any $\mu_n\in (\R^{n+1})^*$ with $X(\mu_n)=+1$. Using multilinearity of the determinant, one can show 
$$X=\sgn(\siinorm{\cdot}_{B, \vec{c}})$$
for either for $B=(\mu_n,\mu_0,\dots,\mu_{n-1})$ or $B=(-\mu_n,\mu_0,\dots,\mu_{n-1})$ and $\vec{c}=(1,0,\dots,0)$.
Since the space of signed seminorms is closed under composition (\emph{cf.}\ Proposition \ref{prop:compSignedSeminorms}), we obtain (a) $\subseteq$ (b).

Vice versa, let $\siinorm{\cdot}$ be a signed seminorm. Then by Proposition \ref{prop:norm_diagonalizable} it is diagonalizable by an ordered basis $B=(b_0,\dots,b_n)$ with vector $\vec{c}=(c_0,\dots,c_n)$. Let $B_i=((-1)^ib_i,b_1,\dots,b_n$ and $\vec{c}_i\coloneqq (c_i,0,\dots,0,\dots,0)$. Then $\siinorm{\cdot}_{B,\vec{c}}=\siinorm{\cdot}_{B,\vec{c}_0} \circ \dots \circ \siinorm{\cdot}_{B,\vec{c}_n}$. As before we have that $\sgn(\siinorm{\cdot}_{B,\vec{c}_i})$ is a cocircuit, hence the composition is a covector.
\item Follows immediately from part (b) by replacing $\siinorm{\cdot}$ by the signed seminorm $\frac{\siinorm{\cdot}}{|\siinorm{\cdot}|}$.
\end{enumerate}
Now we show (c)=(d): From Lemma \ref{lem:convex_cone} we immediately deduce $\subseteq$.

For ``$\supseteq$'', let $X:(\R^{n+1})^*\to \{0,+1,-1\}$ be a function such that $X^+,X^-$ are strictly convex cones without $0$ and $\emptyset \neq X^0\subseteq(\R^{n+1})^*$ is a subspace. Strict convexity of the cones immediately implies axiom (i) from \ref{def:signedSeminorm}. Let $v,w \in (\R^{n+1})^*\setminus 0$. By convexity and (i), if $X(v)=X(w)$ or $X(v)=-X(w)$, then axiom (ii) is fulfilled. By symmetry, the only case that remains to show is if $X(v)=+1$ and $X(w)=0$. If $X(v+w)=-1$, then by convexity and (i) we have 
$$X(w)=X\left(\frac{w}{2}\right)=X\left(-\frac{v}{2}+\frac{v+w}{2}\right)=-1\, ,$$
which is a contradiction.
\end{proof}

\begin{remark}[Covector Axioms]\label{rem:CovectorAxioms}
The covector poset uniquely determines the oriented matroid \cite[\S 3.7]{orientedMatroidsBook}. If $E$ is finite, any poset of signed sets is the covector poset of an oriented matroid if and only if it fulfills:
\begin{itemize}
\addtolength{\itemindent}{.5cm}
\item[\textbf{(Cov1)}] $0\in \Cov$,
\item[\textbf{(Cov2)}] $X\in \Cov$ if and only if $-X\in \Cov$,
\item[\textbf{(Cov3)}] $\Cov$ is closed under composition,
\item[\textbf{(Cov4)}] For all $X,Y\in \Cov$ and $e\in S(X,Y)$ there exists $Z\in \Cov$ with $Z(e)=0$ and 
$$Z(e')=(X\circ Y)(e') \text{ for all } e'\not\in S(X,Y).$$
\end{itemize}
In other words, the covector axioms are a cryptomorphic definition of an oriented matroid. 
\end{remark}

If $E$ is infinite, the covector axiom \textbf{(Cov4)} need not be true. For example, one can show that the covector poset of the restriction of $\Muniv$ to $E'$ in Example \ref{ex:counterexampleCovectors} does not fulfill (iv). 

\begin{proposition}\label{prop:CovectorAxioms}
The covector poset $\Cov$ of $\mathcal{M}_{\univ}$ fulfills the covector axioms for $K=\R$.
\end{proposition}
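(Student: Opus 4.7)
Axioms (Cov1) and (Cov2) are immediate from Proposition \ref{prop:covectorsMuniv}(d): the zero map has $X^0 = (\R^{n+1})^*$ and $X^{\pm} = \emptyset$, and negating a covector swaps $X^+$ and $X^-$ while preserving $X^0$. For (Cov3), I would use Proposition \ref{prop:covectorsMuniv}(c) to view covectors as signed seminorms with values in $\{0, +1, -1\}$. Then by Proposition \ref{prop:compSignedSeminorms}, the composition of two such is again a signed seminorm, still taking values in $\{0, \pm 1\}$, and unpacking the composition of signed seminorms for $\{0,\pm 1\}$-valued maps confirms that it agrees with the signed-set composition $X \circ Y$.

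The main axiom (Cov4) is the hard part. My strategy is to lift $X$ and $Y$ to signs of $\R$-linear maps into a non-Archimedean ordered extension $R \supseteq \R$, perform the one-step elimination that already works when both arguments are cocircuits, and then descend. Concretely, take $R = \R\{\!\{t\}\!\}$ with positive infinitesimal $t$. By Proposition \ref{prop:covectorsMuniv}(a) write $X = \sgn \circ \ell_1^X \circ \cdots \circ \sgn \circ \ell_r^X$ and $Y = \sgn \circ \ell_1^Y \circ \cdots \circ \sgn \circ \ell_s^Y$ for $\R$-linear functionals $\ell_i^X, \ell_j^Y$ on $(\R^{n+1})^*$, and set
\[
f_X \coloneqq \sum_{i=1}^{r} t^{i-1}\ell_i^X, \qquad f_Y \coloneqq \sum_{j=1}^{s} t^{j-1}\ell_j^Y,
\]
which I view as $\R$-linear maps $(\R^{n+1})^* \to R$. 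The leading term of the Puiseux series $f_X(v)$ is $\ell_i^X(v)\,t^{i-1}$ for the smallest $i$ with $\ell_i^X(v) \neq 0$, so $\sgn_R \circ f_X = X$ on $(\R^{n+1})^*$, and analogously $\sgn_R \circ f_Y = Y$.

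For $e \in S(X, Y)$, assume after possibly swapping signs that $X(e) = +1$ and $Y(e) = -1$. Set $\alpha \coloneqq -f_X(e)/f_Y(e) \in R$, which is positive because $f_X(e) > 0$ and $f_Y(e) < 0$, and define $f_Z \coloneqq f_X + \alpha f_Y$ and $Z \coloneqq \sgn_R \circ f_Z$. By construction $f_Z(e) = 0$, so $Z(e) = 0$. For $v \notin S(X, Y)$ one verifies $Z(v) = (X \circ Y)(v)$ by splitting into the possible pairs $(X(v), Y(v)) \in \{(0,0),\ (\pm 1, 0),\ (0, \pm 1),\ (\sigma, \sigma)\}$ and using $\alpha > 0$ in each case. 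The principal subtlety, which I expect to be the main obstacle, is to confirm that $Z$ is a covector of $\mathcal{M}_{\univ}$ over $\R$ rather than merely a sign function ``over $R$'', and this is where Proposition \ref{prop:covectorsMuniv}(d) is indispensable: since $f_Z$ is $\R$-linear into the ordered field $R$, the set $Z^0 = \ker f_Z$ is an $\R$-subspace of $(\R^{n+1})^*$, and $Z^{\pm} = f_Z^{-1}(R_{>0}),\, f_Z^{-1}(R_{<0})$ are strictly convex $\R$-cones avoiding $0$, being positive and negative half-spaces of an $\R$-linear functional into an ordered field.
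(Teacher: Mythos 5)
Your proof is correct and takes a genuinely different route from the paper. For (Cov4), the paper proceeds by induction on dimension: it forms the quotient by $V = \Span(\{e\} \cup (X^0 \cap Y^0))$, shows the images $\pi(A)$ and $\pi(-A)$ of the convex cone $A = (X \circ Y)^+ \setminus S(X,Y)$ are disjoint, invokes the hyperplane separation theorem to produce a separating hyperplane $H$, and then builds the desired covector inductively on $H$, using diagonalizability to control the final covector. Your argument instead lifts both covectors to $\R$-linear maps $f_X, f_Y$ into the ordered field $R = \R\{\!\{t\}\!\}$, where the infinitesimal $t$ encodes the lexicographic priority inherent in the composition $\circ$, performs a single-step linear elimination $f_Z = f_X + \alpha f_Y$, and then descends via Proposition \ref{prop:covectorsMuniv}(d). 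The case analysis for $v \notin S(X,Y)$ is routine, and the observation that $\ker f_Z$ is a subspace containing $e \neq 0$ and that $f_Z^{-1}(R_{\gtrless 0})$ are strictly convex cones is precisely what (d) requires. Both arguments rely on $K = \R$ — yours indirectly through Proposition \ref{prop:covectorsMuniv} (which uses diagonalizability, hence hyperplane separation), the paper's directly — but yours replaces induction and the separation lemma in the elimination step with a one-line algebraic cancellation in the larger field. This is a cleaner and more conceptual argument: it makes transparent that the covector-elimination axiom is a shadow of ordinary linear elimination over a non-Archimedean ordered extension, which is exactly the spirit of the signed-seminorm formalism developed in the paper. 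One minor point worth stating explicitly: you should note that (Cov4) is vacuous if either $X$ or $Y$ is the zero covector (since then $S(X,Y) = \emptyset$), so the decomposition of $X$ and $Y$ into a nonempty composition of cocircuits is legitimate.
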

\begin{proof}
Axioms \textbf{(Cov1)-(Cov3)} are obviously true, so it remains to show \textbf{(Cov4)}. 
Let $X,Y \in \Cov$ and $e\in S(X,Y)$ (in particular, neither $X=0$ nor $Y=0$). Without loss of generality, assume $X(e)=+1$. Set 
$$V\coloneqq  \Span(\{e\} \cup (X^0\cap Y^0)) \subseteq(\R^{n+1})^* \text{ and } \pi: (\R^{n+1})^* \to (\R^{n+1})^*/V$$ the projection map. We define 
$$A=(X\circ Y)^+ \setminus S(X,Y)=(X^+ \cap (Y^+\cup Y^0)) \cup (X^0 \cap Y^+)\, .$$
Then $(\R^{n+1})^*\setminus S(X,Y)= A \cup -A\cup (X^0\cap Y^0)$ and $Z\in \Cov$ fulfils axiom (iv) if and only if $Z|_V=0,\ Z|_A=+1,\ Z|_{-A}=-1$. We claim that $\pi(A)\cap \pi(-A)=\emptyset$. Consider $a\in A, \lambda \in \R, w\in X^0 \cap Y^0$; we have to show that $a+\lambda e +w \not \in -A$. If $\lambda=0$, then $X\circ Y (a+\lambda e+w)=X\circ Y(a)$. If $\lambda>0$, then
$$X(a+\lambda e +w)=X(a+\lambda e)=+1\, .$$
If $\lambda <0$, then $Y(\lambda e)=+1$ and thus
$$Y(a+\lambda e+w)=Y(a+\lambda e)+1\, .$$
In either case, $a+\lambda e+w \not \in -A$. By symmetry, this implies $\pi(A)\cap \pi(-A)=\emptyset$. Since $A$ is convex, so is $-A$ and hence also $\pi(A)$ and $\pi(-A)$. Therefore, we can apply the hyperplane separation Theorem \cite[3.3.9]{stoer2012convexity} in $(\R^{n+1})^*/V$ to the sets $\pi(A),\ \pi(-A)$ to obtain a separating hyperplane $H'\subseteq(\R^{n+1})^*/V$. The pull-back $H=\pi^{-1}(H')$ then separates $A,-A$ and $V\subseteq H$. By induction, on $H$ there is a covector fulfilling (iv), given by the sign of a diagonalizable seminorm, which in return is given by an ordered basis $(b_1,\dots,b_n)$ of $H$ and $(c_1\geq \dots \geq c_n)$. Choosing any $b_0\in A\setminus H$ and $c_0 >> c_i$ yields a signed seminorm $\siinorm{\cdot}$ on $(\R^{n+1})^*$ such that $Z=\sgn(\siinorm{\cdot})$ fulfills (iv).
\end{proof}

The \emph{restriction} $\mathcal{M}|_{E'}$ of an oriented matroid to a subset $E'\subseteq E$ is the oriented matroid whose signed circuits are the signed subsets of $\mathcal{C}$ whose support is contained in $E'$. One can easily check that the axioms \textbf{(C0)}-\textbf{(C4)} hold and that restriction commutes with taking the underlying matroid. This is equivalent to restricting a chirotope.

\begin{lemma}\label{lem:restrictionCovectors}
Let $\mathcal{M}$ be an oriented matroid on a (possibly infinite) ground set $E$ with covector set $\Cov$ and let $E'\subseteq E$ a finite subset. Then the restriction of $\mathcal{M}$ to $E'$ has covector set $\Cov'=\{X|_{E'} \mid X\in \Cov\}$. In particular, there is an order-preserving, surjective restriction map $\Cov \to \Cov'$.
\end{lemma}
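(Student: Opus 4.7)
The plan is to reduce to the standard result for finite oriented matroids (\emph{cf.}\ \cite[Proposition 3.7.11]{orientedMatroidsBook}), namely that for a finite oriented matroid the covectors of a restriction are exactly the restrictions of covectors to the subset. The reduction is possible because any element of $\Cov$ is by definition a \emph{finite} composition of cocircuits and hence is ``witnessed'' by a finite subset of $E$. The key observation I would use is that if $E'' \subseteq E$ is a finite subset containing a basis of $\mathcal{M}$ (which exists since $\varphi$ is nonzero and of finite rank $n$ by \textbf{(C4)}), then $\mathcal{M}|_{E''}$ has rank $n$, its chirotope coincides with $\varphi|_{(E'')^n}$, and the cocircuit of $\mathcal{M}|_{E''}$ coming from any $\mu \in (E'')^{n-1}$ is precisely the restriction to $E''$ of the cocircuit of $\mathcal{M}$ coming from the same $\mu$.

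For the inclusion ``$\Cov' \subseteq$ covectors of $\mathcal{M}|_{E'}$'', I would write $X = X_1 \circ \cdots \circ X_k \in \Cov$ as a composition of cocircuits $X_i = \pm[e \mapsto \varphi(\mu_i, e)]$ and enlarge $E'$ to a finite set $E''$ containing also all entries of all $\mu_i$ and a basis of $\mathcal{M}$. Then each $X_i|_{E''}$ is a cocircuit of $\mathcal{M}|_{E''}$, so $X|_{E''} = X_1|_{E''} \circ \cdots \circ X_k|_{E''}$ is a covector of $\mathcal{M}|_{E''}$ (using that restriction of signed sets commutes with composition). Applying the finite case to the further restriction from $E''$ to $E'$ then yields that $X|_{E'}$ is a covector of $\mathcal{M}|_{E'}$.

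For the other inclusion, given a covector $Y$ of $\mathcal{M}|_{E'}$ I would pick any finite $E''$ containing $E'$ and a basis of $\mathcal{M}$. The finite case applied to $\mathcal{M}|_{E''}$ produces a covector $Y''$ of $\mathcal{M}|_{E''}$ restricting to $Y$ on $E'$. Writing $Y'' = Y''_1 \circ \cdots \circ Y''_m$ with each $Y''_i$ a cocircuit of $\mathcal{M}|_{E''}$ coming from some $\mu_i \in (E'')^{n-1}$, I would lift each $Y''_i$ to the cocircuit $X_i$ of $\mathcal{M}$ from the same $\mu_i$, which by our choice of $E''$ satisfies $X_i|_{E''} = Y''_i$. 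Then $X \coloneqq X_1 \circ \cdots \circ X_m \in \Cov$ has $X|_{E'} = Y''|_{E'} = Y$, so $Y \in \Cov'$. Order-preservation of the resulting surjection $\Cov \to \Cov'$ is immediate from the coordinatewise definition of the partial order on signed sets.

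The main obstacle I anticipate is the bookkeeping around ranks: the chirotope of $\mathcal{M}|_{E''}$ literally equals $\varphi|_{(E'')^n}$ only when $\mathcal{M}|_{E''}$ has full rank $n$, which is precisely why $E''$ must be chosen to contain a basis of $\mathcal{M}$; without this, cocircuits of $\mathcal{M}|_{E''}$ would not lift faithfully to cocircuits of $\mathcal{M}$, and the identification via the chirotope would fail.
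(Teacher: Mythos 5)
Your proof is correct and follows the same essential strategy as the paper's: reduce to the finite case \cite[Proposition 3.7.11]{orientedMatroidsBook} via a finite auxiliary set $E''\supseteq E'$, and transfer cocircuits between $\mathcal{M}$ and $\mathcal{M}|_{E''}$ through the chirotope. The paper's proof is terser and, for the inclusion $\Cov'\subseteq$ covectors of $\mathcal{M}|_{E'}$, takes $E''=E'\cup\{\mu_1,\dots,\mu_{n-1}\}$ without explicitly adding a basis; this is harmless because if $X|_{E''}\neq 0$ then $E''$ automatically contains a basis $\{\mu_1,\dots,\mu_{n-1},e\}$, and if $X|_{E''}=0$ the claim is vacuous. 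Your choice of always adjoining a basis to $E''$ is a cleaner way to guarantee $\operatorname{rank}\mathcal{M}|_{E''}=n$ so that the chirotope of $\mathcal{M}|_{E''}$ is literally $\varphi|_{(E'')^n}$, which is exactly the bookkeeping point you flag at the end. The more substantial difference is that you carefully spell out the converse inclusion (every covector of $\mathcal{M}|_{E'}$ is the restriction of a covector of $\mathcal{M}$) by lifting each cocircuit of $\mathcal{M}|_{E''}$ back to $\mathcal{M}$ along the same $\mu_i$; the paper's written proof only explicitly treats the forward inclusion and leaves this lifting step implicit. So your argument is a correct and somewhat more complete rendering of the same proof.

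One tiny nit: the existence of a basis of $\mathcal{M}$ follows already from the Grassmann--Pl\"ucker function being not identically zero (axiom (a) of Definition \ref{def:matroidsoverhyperfieldsGP}); citing \textbf{(C4)} for this is a slight misattribution, since \textbf{(C4)} enforces finiteness of the rank rather than nonvanishing of $\varphi$. This does not affect the argument.
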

\begin{proof}
If $E$ is finite, this follows from \cite[Proposition 3.7.11]{orientedMatroidsBook}. If $E$ be infinite, since composition of signed sets commutes with restriction, it suffices to consider cocircuits of $\mathcal{M}$ and show that their restriction to $E'$ is a covector. Choose a chirotope $\varphi$ and $X$ be a cocircuit given by $X(e)=\varphi(\mu_1,\dots , \mu_{n-1},e)$ for $\mu_1,\dots,\mu_{n-1} \in E$. Consider $E''=E'\cup \{\mu_1,\dots,\mu_{n-1}\}$. Then surely, $X|_{E''}$ is a covector (even a cocircuit), hence $X_{E'}=(X|_{E''})|_{E'}$ is a covector on $E'$ by the finite case.
\end{proof}

Consequences of the absence of duality leads to different behavior of the covector poset:

\begin{example}\label{ex:counterexampleCovectors}
The condition of $E'$ being finite in Lemma \ref{lem:restrictionCovectors} is indeed necessary. Consider the universal realizable matroid $\mathcal{M}_{\univ}$ with ground set $E=\R^3$. Let $E'=E\setminus H \cup \{v\}$, where $H\subset \R^3$ is any hyperplane and $v\in H\setminus 0$. Let $X$ be a cocircuit of $\mathcal{M}_{\univ}$ which is given by a basis $\mu$ of $H$ as in Definition \ref{def:cocircuit} with any chirotope. Then one can show that $X|_{E'}$ is not a covector of the restriction to $E'$.
\end{example}

\begin{proposition}\label{prop:covectors_to_flats}
Let $\mathcal{M}$ be an oriented matroid of rank $n$ with covector poset $\Cov$, and let $\mathcal{F}$ be the lattice of flats of the underlying matroid. The assignment
\begin{align*}
\Cov &\longrightarrow \mathcal{F} \\
X &\longmapsto X^0
\end{align*}
is well-defined, surjective, and strictly monotonic. In particular, any chain in $\Cov$ has length at most $n$.
\end{proposition}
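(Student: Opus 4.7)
The plan is to establish the four assertions (well-definedness, surjectivity, strict monotonicity, chain bound) in turn, with the substantive work concentrated in surjectivity.

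For well-definedness, I would first apply Lemma \ref{lem:pushforwardcircuits} to the canonical hyperfield homomorphism $\omega\colon\mathbb{S}\to\mathbb{K}$ to see that the support of any signed cocircuit $X^\ast\in\calC^\ast$ is a cocircuit of the underlying matroid $\omega_\ast(\calM)$, so $(X^\ast)^0$ is the complement of a cocircuit, i.e.\ a hyperplane. Since $\Supp(X\circ Y)=\Supp(X)\cup\Supp(Y)$, any covector $X=X_1^\ast\circ\cdots\circ X_m^\ast$ satisfies
\[
X^0=\bigcap_{i=1}^m (X_i^\ast)^0,
\]
a finite intersection of hyperplanes, and therefore a flat.

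For surjectivity, the key matroid-theoretic input is the claim that every rank-$k$ flat $F$ of a rank-$n$ matroid is the intersection of $n-k$ hyperplanes. To prove this, I would choose a basis $\mu_1,\ldots,\mu_k$ of $F$, extend to a basis $\mu_1,\ldots,\mu_n$ of $\calM$, and for $i=k+1,\ldots,n$ set $H_i=\mathrm{cl}\bigl(\{\mu_1,\ldots,\mu_n\}\setminus\{\mu_i\}\bigr)$. The inclusion $F\subseteq\bigcap_{i>k}H_i$ is immediate, and the reverse inclusion is a short fundamental-circuit argument: an element $e\in\bigcap_{i>k}H_i\setminus F$ would yield a circuit $C\subseteq\{e,\mu_1,\ldots,\mu_n\}$ avoiding every $\mu_i$ with $i>k$, hence $C\subseteq\{e,\mu_1,\ldots,\mu_k\}$, contradicting $e\notin\mathrm{cl}(\{\mu_1,\ldots,\mu_k\})=F$. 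I then lift each $H_i$ to a signed cocircuit $X_i^\ast$ of $\calM$ by setting $X_i^\ast(e)=\varphi(\mu_1,\ldots,\hat{\mu}_i,\ldots,\mu_n,e)$, which has zero set exactly $H_i$. The composition $X=X_{k+1}^\ast\circ\cdots\circ X_n^\ast$ is then a covector with $X^0=\bigcap_{i=k+1}^n H_i=F$, and the degenerate case $k=n$ (so $F=E$) is handled by the empty composition $X=0\in\Cov$.

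For strict monotonicity, I would unpack the definition: $X\le Y$ is the coordinatewise order induced by $0<\pm 1$, which immediately forces $Y^0\subseteq X^0$. If in addition $X^0=Y^0$, then $\Supp X=\Supp Y$, and combined with $Y|_{\Supp X}=X|_{\Supp X}$ this gives $X=Y$; hence $X<Y$ implies $X^0\supsetneq Y^0$, so the map is strictly order-reversing. The chain bound is then immediate: a strict chain $X_0<X_1<\cdots<X_m$ in $\Cov$ descends to a strict chain of flats $X_0^0\supsetneq X_1^0\supsetneq\cdots\supsetneq X_m^0$ in a rank-$n$ lattice, forcing $m\le n$.

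The principal technical point is the matroid lemma used in surjectivity; its proof relies crucially on finiteness of the rank (even though $E$ may be infinite), and the fundamental-circuit argument is the one place one needs to be careful that the closure operator behaves correctly on an infinite ground set. Once that lemma is in hand, every part of the proposition reduces to bookkeeping.
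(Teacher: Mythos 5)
Your proof is correct and takes essentially the same route as the paper: cocircuit zero sets are hyperplanes, covector zero sets are intersections of these, and surjectivity is handled by extending a basis of the flat to a basis of the matroid and composing the resulting signed cocircuits (your $X_i^\ast$ are exactly the paper's $C_i^\ast$). One small caveat: Lemma \ref{lem:pushforwardcircuits} concerns circuits, and duality of infinite matroids is deliberately not available in this framework, so you cannot deduce the claim about signed cocircuits from it; instead, observe directly that for a cocircuit $X^\ast$ given by $\mu \in E^{n-1}$, $(X^\ast)^0 = \{e \mid \varphi(\mu,e)=0\}$ is the matroid closure of the independent set $\mu$, hence a hyperplane — this is what the paper does. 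Your fundamental-circuit verification of the matroid lemma and your explicit check of strict monotonicity supply details the paper leaves implicit, so this is a welcome expansion rather than a divergence.
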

\begin{proof}
Let $C^*\in \mathcal{C}^*$ be a cocircuit that is given by $\mu \in E^{n-1}$ as in Definition \ref{def:Covectors}. Then $(C^*)^0$ equals the closure of $\mu$ in the underlying matroid, hence it is a flat. Now the zero set of a covector equals the intersection of the zero sets of the circuits which appear in the composition, hence it is also a flat.

Let $F\in \mathcal{F}$ be any flat of rank $d\leq n$. Choose a basis $f_1,\dots,f_d,b_1,\dots,b_{n-d}$ of $M$ such that $F$ is the closure of $f_1,\dots f_d$. Define 
$$C_i^*(e)=\varphi(f_1,\dots,f_d,b_1,\dots,\hat{b}_i,\dots,b_{n-d}, e)\, ,$$
then for $X\coloneqq  C_1^* \circ \dots \circ C_{n-d}^*$ we have $X^0=F$.
\end{proof}

\subsection{Real Bergman Fans} 
Let $\mathcal{M}$ be an oriented matroid of rank $n+1$ on any ground set $E$ with poset of covectors $\Cov$. We define its \emph{real Bergman fan} $\Sigma^*_{\mathcal{M}}$ to be given by the collection of cones 
$$\langle X_1, \dots X_l\rangle_{\R_{> 0}} \subset \R\P^E$$
for each chain of non-zero covectors $X_1< \dots < X_l$. This is, as in the non-oriented case, a fan of pure dimension $n$. If $E$ is infinite, $\Sigma^*_{\mathcal{M}}$ may have infinitely many cones and is embedded into an infinite-dimensional real projective space.

\begin{remark}
\begin{enumerate}[(a)]
    \item In his thesis \cite{celaya-diss}, Celaya considered finite oriented matroids and showed that the real Bergman fan shares many of the same features of a Bergman fan. After taking the componentwise logarithm, the real Bergman fan restricted to the positive orthant coincides with the positive Bergman fan considered in \cite{ArdilaKlivansWilliams}.
    \item The real Bergman fan arises naturally from the perspective of matroids over hyperfields. The oriented matroid $\calM$ can be interpreted as a matroid
over the real tropical hyperfield $\R\T$, with trivial valuation. In \cite{anderson2019vectors}, the author introduces the notion of vectors of a matroid over a hyperfield where the $\R\T$- covectors are the signed covectors of the oriented matroid $\calM$. The set of $\R\T$-vectors of $\calM$ coincides exactly with
the support of $\Sigma^*_{\calM}.$ In unsigned tropical geometry, the Bergman fan of a matroid $M$ arises by considering it as a matroid over the Krasner hyperfield $\mathbb{K}$ and taking the set of $\mathbb{K}$-vectors of $M$.
\item In \cite{RRS22}, the authors showed that real phase structures on the Bergman fan of a finite matroid are in one to one correspondence with orientations of that matroid. In particular, the datum of the real Bergman fan is cryptomorphic to that of the Bergman fan and a choice of a real phase structure.
\end{enumerate}
\end{remark}

For finite oriented matroids, the notions of its real Bergman fan and its associated real tropical linear space agree:

\begin{proposition}[{\cite[Proposition 2.4.7, Corollary 2.4.8]{celaya-diss}}]\label{prop:BMfanIsTropM}
Let $\mathcal{M}$ be an oriented matroid on a finite ground set. Then
$$\Trop_r(\mathcal{M}) = \Sigma^*_{\mathcal{M}}$$
\end{proposition}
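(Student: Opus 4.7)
The plan is to prove both inclusions directly, using the classical characterization of covectors of a finite oriented matroid as precisely those sign vectors that are orthogonal (in the signed sense) to every signed circuit. Recall that $X \in \{0,\pm 1\}^E$ is orthogonal to a signed circuit $C$ when either $\Supp(X) \cap \Supp(C) = \emptyset$, or the set $\{X(e)\cdot C(e) : e \in \Supp(X)\cap \Supp(C)\}$ contains both signs. This is exactly the condition $0 \in \bigoplus_{e} X(e)\cdot C(e)$ in the sign hyperfield $\SS$.

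For the inclusion $\Sigma^*_{\mathcal{M}} \subseteq \Trop_r(\mathcal{M})$, take a point $y = \sum_{i=1}^l \lambda_i X_i$ with $\lambda_i > 0$ in a cone $\langle X_1,\ldots,X_l\rangle_{\R_{>0}}$, where $X_1 < \cdots < X_l$ is a chain of covectors. Because a chain satisfies $X_i|_{\Supp(X_i)} = X_{i+1}|_{\Supp(X_i)}$, at each coordinate $e$ with smallest $i$ such that $X_i(e) \neq 0$ one has $y_e = \big(\sum_{j\geq i}\lambda_j\big)X_i(e)$. Fix a signed circuit $C$ of $\mathcal{M}$ with $\Supp(C) \cap \Supp(y) \neq \emptyset$, and let $i^*$ be the smallest index with $\Supp(X_{i^*}) \cap \Supp(C) \neq \emptyset$. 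Then $M = \max_{e \in \Supp(C)}|y_e| = \sum_{j \geq i^*}\lambda_j$ is attained exactly on $\Supp(X_{i^*})\cap\Supp(C)$, with $\sgn(y_e) = X_{i^*}(e)$ there. Orthogonality of the covector $X_{i^*}$ with $C$ yields $i \neq j$ in this intersection with $X_{i^*}(i)C(i) = -X_{i^*}(j)C(j)$, hence $y_iC(i) = -y_jC(j)$. This is exactly the condition defining $\Trop_r(\mathcal{M}_C)$.

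For the reverse inclusion $\Trop_r(\mathcal{M}) \subseteq \Sigma^*_{\mathcal{M}}$, take any representative $y \in \R^E$ and let $a_1 > a_2 > \cdots > a_k > 0$ be the distinct nonzero values taken by $|y_e|$. Define $X_s \in \{0,\pm 1\}^E$ by
\begin{equation*}
X_s(e) = \begin{cases}\sgn(y_e) & \text{if } |y_e|\geq a_s,\\ 0 & \text{otherwise.}\end{cases}
\end{equation*}
By construction $X_1 < X_2 < \cdots < X_k$. I claim each $X_s$ is a covector. For any signed circuit $C$, either $\Supp(X_s)\cap\Supp(C) = \emptyset$ and orthogonality holds trivially, or this intersection is nonempty. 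In the latter case $\max_{e\in\Supp(C)}|y_e| \geq a_s$, and since $y \in \Trop_r(\mathcal{M}_C)$ this maximum is attained at indices $i\neq j \in \Supp(C)$ with $y_iC(i) = -y_jC(j)$; since $|y_i| = |y_j| \geq a_s$, both lie in $\Supp(X_s)$, and the opposite-sign condition transfers to $X_s(i)C(i) = -X_s(j)C(j)$. Hence $X_s \perp C$, so $X_s$ is a covector. Setting $a_{k+1}=0$, a coordinatewise check gives $y = \sum_{s=1}^{k}(a_s-a_{s+1})X_s$, expressing $y$ as a positive combination of the chain, i.e.\ as a point of the cone $\langle X_1,\ldots,X_k\rangle_{\R_{>0}} \subseteq \Sigma^*_{\mathcal{M}}$.

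There is no real obstacle here: once the classical identity \emph{covectors} $=$ \emph{sign vectors orthogonal to all circuits} is in hand (Bj\"orner--Las Vergnas--Sturmfels--White--Ziegler, \cite{orientedMatroidsBook}), both directions reduce to tracking the level sets $\{|y_e| \geq a_s\}$ and matching the real tropical hyperplane condition with signed orthogonality. The only point that requires a little care is correctly identifying, in the forward direction, the first $X_{i^*}$ in the chain whose support meets $\Supp(C)$, since the max of $|y_e|$ along $\Supp(C)$ is controlled by this $X_{i^*}$ rather than by $X_1$ itself.
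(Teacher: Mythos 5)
Your argument is correct. The paper does not give its own proof of this proposition --- it cites Celaya's thesis \cite{celaya-diss} --- but the route you take is the natural, self-contained one: invoke the classical characterization of covectors of a finite oriented matroid as precisely the sign vectors orthogonal to every signed circuit \cite{orientedMatroidsBook}, then read off the chain $X_1 < \cdots < X_k$ from the level sets $\{|y_e|\geq a_s\}$ and verify orthogonality against each circuit using the real tropical hyperplane condition. The decomposition $y = \sum_{s}(a_s-a_{s+1})X_s$ and the identification of the controlling index $i^*$ in the forward direction are exactly right. Two small points worth making explicit for completeness: in the inclusion $\Sigma^*_{\mathcal{M}} \subseteq \Trop_r(\mathcal{M})$, circuits $C$ with $\Supp(C)\cap\Supp(y)=\emptyset$ satisfy the hyperfield condition trivially (all summands vanish), which is needed because $\Trop_r(\mathcal{M})$ is an intersection over \emph{all} circuits; and in the reverse inclusion, the two representatives $\pm y$ of a class in $\R\P^E$ produce the chains $\pm X_1 < \cdots < \pm X_k$, both consisting of covectors, so the assignment is well-defined on homothety classes.
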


\begin{corollary}[{\cite[Proposition 2.5.4]{celaya-diss}}]\label{cor:BMfanIsTrop}
Let $\iota=(f_0:\dots:f_m): \P^n \xhookrightarrow{} \P^m$ be a linear embedding for any real closed and trivially valued field $K$. Let $\mathcal{M}_{\iota}$ be the associated realizable oriented matroid on $\{f_0,\dots,f_m\}\subset (K^{n+1})^*$. Then
\[\Trop_r(\P^n,\iota)=\Sigma^*_{\mathcal{M}_{\iota}}\, .\]
\end{corollary}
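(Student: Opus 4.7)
The corollary follows by chaining two results already established in the excerpt, together with the observation that, in the trivially valued case, the realizable oriented valuated matroid $\mathcal{M}_\iota$ coincides with the realizable oriented matroid on the finite set $\{f_0,\dots,f_m\}$.

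First, I would invoke Proposition \ref{prop:tableratropicalbasis} applied to the linear embedding $\iota$ to obtain the identification
\[
\Trop_r(\P^n,\iota) \;=\; \Trop_r(\mathcal{M}_\iota)\, .
\]
Strictly speaking, Proposition \ref{prop:tableratropicalbasis} is stated for a real closed field with compatible non-Archimedean absolute value, so for a trivially valued $K$ one passes to a non-trivially valued real closed extension $L/K$, uses that both the real tropicalization $\Trop_r(\P^n,\iota)$ and the associated $\R\T$-matroid are invariant under base change (the latter because in the trivial valuation case, all determinants $\det[f_{a_1},\dots,f_{a_{n+1}}] \in K$ have absolute value $0$ or $1$ and well-defined signs regardless of base change), and then applies the non-trivially valued case.

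Next I would observe that, since $K$ is trivially valued, the Grassmann--Plücker function $\varphi_{\mathcal{M}_\iota}$ takes values in $\{0,+1,-1\} = \mathbb{S} \subset \R\T$, so $\mathcal{M}_\iota$ is in fact an oriented matroid (a matroid over $\mathbb{S}$), realized by the linear map $K^{n+1} \to K^{m+1}$ dual to $\iota$, with finite ground set $\{f_0,\dots,f_m\}$. Hence Proposition \ref{prop:BMfanIsTropM} applies directly and yields
\[
\Trop_r(\mathcal{M}_\iota) \;=\; \Sigma^*_{\mathcal{M}_\iota}\, .
\]
Combining the two equalities gives the result.

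The only point that requires any care is verifying that the real tropical linear space $\Trop_r(\mathcal{M}_\iota)$ defined via $\R\T$-circuits (Definition \ref{def:realtropicallinearspace}) agrees, in the trivially valued case, with the real tropical linear space of $\mathcal{M}_\iota$ viewed as an oriented matroid used in Proposition \ref{prop:BMfanIsTropM}. This is immediate from Lemma \ref{lem:pushforwardcircuits}: pushing the $\R\T$-circuits of $\mathcal{M}_\iota$ along $\R\T \to \mathbb{S}$ gives precisely the signed circuits of the underlying oriented matroid, and since all nonzero entries of an $\R\T$-circuit have the same absolute value (namely $1$), the circuit hyperplane conditions over $\R\T$ reduce to the sign conditions over $\mathbb{S}$. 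I do not anticipate a serious obstacle here; the corollary really is a direct concatenation of Propositions \ref{prop:tableratropicalbasis} and \ref{prop:BMfanIsTropM}.
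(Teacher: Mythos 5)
Your proof is correct and is precisely the intended derivation: the paper cites the result from Celaya's thesis and labels it a corollary exactly because it follows by concatenating Proposition \ref{prop:tableratropicalbasis} with Proposition \ref{prop:BMfanIsTropM}, after noting that trivial valuation forces the Grassmann--Pl\"ucker function $\varphi_{\mathcal{M}_\iota}$ to land in $\{0,+1,-1\}$, so that the oriented valuated matroid is just an oriented matroid. Your handling of the base-change subtlety (passing to a non-trivially valued real closed extension $L/K$, then using that the $f_i$ and all their maximal minors live in $K$, so the associated $\R\T$-matroid is unchanged) and your check via Lemma \ref{lem:pushforwardcircuits} that the two circuit-based definitions of $\Trop_r(\mathcal{M}_\iota)$ agree are both correct and are the only places where care is required.
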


\begin{proposition}[{\emph{cf.} \cite[3.3.3]{celaya-diss}, \cite[Theorem 5.2.1]{orientedMatroidsBook}}] \label{prop:realtrophomeom}
The real Bergman fan $\Trop_r(\mathcal{M})$ of a finite oriented matroid $\mathcal{M}$ of rank $n$ is homeomorphic to $\R\P^{n}$.
\end{proposition}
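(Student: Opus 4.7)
My plan is to deduce the statement from the Topological Representation Theorem of Folkman--Lawrence (\cite[Theorem~5.2.1]{orientedMatroidsBook}) via a cell-complex identification of the real Bergman fan with a pseudo-sphere arrangement.

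First, I would analyze the CW structure that $\Sigma^*_{\mathcal{M}}$ inherits from $\R\P^E$. By construction, the closed cones $\langle X_1,\ldots,X_l\rangle_{\R_{>0}}$ over chains of non-zero covectors give a conical decomposition of the affine cone over $\Sigma^*_{\mathcal{M}}$ in $\R^E$, and the quotient by $\R^*$ in $\R\P^E$ identifies antipodal cones, using that $-X\in\Cov$ whenever $X\in\Cov$ (Remark~\ref{rem:CovectorAxioms}). By Proposition~\ref{prop:covectors_to_flats} every chain has length at most $n$, so the decomposition is pure of the expected dimension and naturally realizes, inside $\R\P^E$, the order complex $\Delta\big((\Cov\setminus\{0\})/\{\pm 1\}\big)$ of the quotient poset.

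Next, I would invoke the Topological Representation Theorem, which produces an arrangement of oriented pseudo-spheres realizing $\mathcal{M}$ whose augmented face lattice is canonically isomorphic to $\Cov\cup\{\hat 0\}$. This gives a regular CW decomposition of the ambient sphere whose open cells are indexed by non-zero covectors, with $X$ and $-X$ labelling antipodal faces. Quotienting by the antipodal map produces a regular CW structure on the corresponding real projective space whose face poset is $(\Cov\setminus\{0\})/\{\pm 1\}$, and whose barycentric subdivision is again the order complex $\Delta\big((\Cov\setminus\{0\})/\{\pm 1\}\big)$.

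Both $\Sigma^*_{\mathcal{M}}$ and the target real projective space therefore admit a common cellular model, namely the geometric realization of $\Delta\big((\Cov\setminus\{0\})/\{\pm 1\}\big)$. The desired homeomorphism would be the map sending a vertex $[X]\in\R\P^E$ (a non-zero covector, modulo sign) to the barycenter of the corresponding pseudo-face in the projective pseudo-arrangement, and extending piecewise-linearly on each simplex of each Bergman cone. The main obstacle I expect is upgrading this from a combinatorial isomorphism of posets to a genuine homeomorphism of topological spaces: this requires using the \emph{regularity} (hence shellability) of the pseudo-sphere decomposition so that its barycentric subdivision is simplicially isomorphic to the order complex extracted from $\Sigma^*_{\mathcal{M}}$, and checking that the sign conventions for cocircuits from Definition~\ref{def:cocircuit} match the orientations of the pseudo-spheres so that the antipodal identifications on the two sides correspond correctly.
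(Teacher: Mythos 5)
The paper gives no proof of this statement, citing instead Celaya's thesis and the Topological Representation Theorem of Folkman--Lawrence, which is exactly the route you take, so your approach matches the intended one. Two issues are worth flagging.

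First, a dimension shift that you do not confront: the Topological Representation Theorem realizes a rank-$n$ oriented matroid by a pseudo-sphere arrangement in $S^{n-1}$, so the antipodal quotient is $\R\P^{n-1}$, not $\R\P^{n}$, and your argument establishes $\Trop_r(\calM)\cong\R\P^{n-1}$ for a rank-$n$ matroid. This is in fact consistent with the paper's own definition of $\Sigma^*_{\calM}$ (stated there for a rank-$(n+1)$ matroid and asserted to be a pure $n$-dimensional fan), so the proposition as written appears to be off by one; your proof proves the correct statement, but you should make the index bookkeeping explicit rather than leave the dimension of the ambient sphere unstated.

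Second, identifying the antipodal quotient of $\Delta(\Cov\setminus\{0\})$ with the order complex $\Delta\big((\Cov\setminus\{0\})/\{\pm 1\}\big)$ of the quotient poset is a genuine step you gloss over --- it is false for a general free $\Z/2$-action on a poset. Here you need the observation that for a non-zero covector $X$ one cannot have both $X<Y$ and $X<-Y$: otherwise $Y$ and $-Y$ would both restrict to $X$ on $\Supp(X)$, forcing $X=-X=0$. This shows that each chain in the quotient poset lifts to exactly two chains in $\Cov\setminus\{0\}$, antipodal to one another, so the two simplicial complexes agree. The same condition is what guarantees that the closure of no cell of the pseudo-sphere arrangement contains an antipodal pair of faces, so the quotient CW structure on the projective space is again regular, which is what licenses the barycentric-subdivision comparison. (Minor: the augmented face lattice of the arrangement is $\Cov\cup\{\hat 1\}$, not $\Cov\cup\{\hat 0\}$, and regularity of a CW complex does not imply shellability --- your argument only uses regularity.)
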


\begin{theorem}\label{thm:realBMfanBuilding}
For $K=\R$ and the vector space $(\R^{n+1})^*$, there is a homeomorphism
$$\RX_{n}(\R) \cong \Sigma^*_{\Muniv}\, .$$
\end{theorem}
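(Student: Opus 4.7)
The plan is to combine the identification $\RX_n(\R) = \Trop_r(\mathcal{M}_{\univ})$ from Theorem \ref{thm:GIspaceIsTropMuniv} with the diagonalizability of signed seminorms over $\R$ (Proposition \ref{prop:norm_diagonalizable}) and the description of covectors of $\mathcal{M}_{\univ}$ (Proposition \ref{prop:covectorsMuniv}). Since both $\RX_n(\R)$ and $\Sigma^*_{\mathcal{M}_{\univ}}$ are realized as subsets of $\R\P^E$ with $E = (\R^{n+1})^{\ast}$, it suffices to show they agree set-theoretically; the homeomorphism then follows automatically from the shared subspace topology, and compactness and Hausdorffness transfer from Corollary \ref{cor:RX_compact}.

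For the inclusion $\RX_n(\R) \subseteq \Sigma^*_{\mathcal{M}_{\univ}}$, I would use Proposition \ref{prop:norm_diagonalizable} to write any class as $\siinorm{\cdot}_{B, \vec{c}}$ with $c_1 \geq \cdots \geq c_{n+1} \geq 0$. Setting $c_{n+2} = 0$, for every index $k$ with $\alpha_k := c_k - c_{k+1} > 0$, define $X_k := \sgn \circ \siinorm{\cdot}_{B, \vec{c}^{(k)}}$, where $\vec{c}^{(k)} = (1, \ldots, 1, 0, \ldots, 0)$ has $k$ leading ones; by Proposition \ref{prop:covectorsMuniv} each $X_k$ is a covector of $\mathcal{M}_{\univ}$. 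A direct calculation gives $X_k^0 = \Span(b_{k+1}, \ldots, b_{n+1})$ (an $(n+1-k)$-dimensional flat strictly shrinking with $k$) and $X_{k'}|_{\Supp X_k} = X_k|_{\Supp X_k}$ for $k < k'$, so the $X_k$'s form a strictly increasing chain. Using the trivial-valuation formula $\siinorm{e}_{B, \vec{c}} = \sgn(\lambda_j) c_j$ (where $e = \sum \lambda_i b_i$ and $j = \min\{i : \lambda_i \neq 0\}$) together with the telescoping identity $\sum_{k \geq j} \alpha_k = c_j$, one checks pointwise that $\siinorm{\cdot}_{B, \vec{c}} = \sum_k \alpha_k X_k$ as functions on $E$, so $[\siinorm{\cdot}]$ lies in the cone $\langle X_k \rangle_{\R_{>0}} \subseteq \Sigma^*_{\mathcal{M}_{\univ}}$.

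For the reverse inclusion $\Sigma^*_{\mathcal{M}_{\univ}} \subseteq \RX_n(\R)$, given a chain $X_1 < \cdots < X_l$ of covectors of $\mathcal{M}_{\univ}$ with weights $\alpha_1, \ldots, \alpha_l > 0$, I want to realize $f := \sum \alpha_i X_i$ as a signed seminorm up to homothety. By Proposition \ref{prop:covectors_to_flats}, the zero sets $W_i := X_i^0 \cup \{0\}$ form a strictly decreasing flag of linear subspaces of $(\R^{n+1})^{\ast}$. Using the characterization of covectors as signed seminorms with values in $\{0, \pm 1\}$ whose nonzero-sign loci are convex cones separated by a hyperplane (Proposition \ref{prop:covectorsMuniv}(d)), I would inductively build an ordered basis $B = (b_1, \ldots, b_{n+1})$ refining the flag $W_\bullet$ and simultaneously compatible with the sign data of each $X_k$; the compatibility $X_{k'}|_{\Supp X_k} = X_k|_{\Supp X_k}$ is what makes this simultaneous choice possible. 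Setting $c_j := \sum_{k : b_j \notin W_k} \alpha_k$ then produces parameters $c_1 \geq \cdots \geq c_{n+1} \geq 0$ for which the pointwise identity of the first direction inverts to give $f = \siinorm{\cdot}_{B, \vec{c}}$.

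The main obstacle is the inductive basis construction in the reverse inclusion: one must place each new basis vector inside a specified convex cone cut out by several covectors at once, which relies crucially on the $\R$-specific hyperplane separation that underlies Proposition \ref{prop:norm_diagonalizable}. As an alternative to this direct construction, one could bypass it by combining Remark \ref{rem:GIspaceIsTropMuniv}(b) (tropical linear space as inverse limit of finite restrictions), Proposition \ref{prop:BMfanIsTropM} (finite-matroid case), Lemma \ref{lem:restrictionCovectors} (covectors behave well under restriction to finite subsets), and the chain-length bound from Proposition \ref{prop:covectors_to_flats} to transport the finite-ground-set identification along the inverse system of finite restrictions $\mathcal{M}_{\univ}|_{E'}$.
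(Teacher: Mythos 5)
Your proposal takes essentially the same approach as the paper's proof: both express a point as a diagonalizable signed seminorm $\siinorm{\cdot}_{B,\vec{c}}$ via Proposition \ref{prop:norm_diagonalizable}, identify the truncations of $\vec{c}$ as a chain of covectors of $\mathcal{M}_{\univ}$ via Proposition \ref{prop:covectorsMuniv}, and exhibit the correspondence through the telescoping relation $c_j = \sum_{k \geq j}\alpha_k$. You merely run the two directions in the opposite order from the paper's presentation, and you flag the common-basis step (placing new basis vectors compatibly with several covectors at once) which the paper dispatches tersely by pointing back to the proof of Proposition \ref{prop:covectorsMuniv}.
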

\begin{proof}
We can make the correspondence very explicit as follows. Let $x \in \Sigma^*_{\Muniv}$. Then there are $\alpha_i \geq 0$ and a flag of covectors $X_0 < \dots < X_n$ of $\Muniv$ such that $x = \sum_{i=0}^n \alpha_i e_{X_i}$. Hence there are signed cocircuits $Y_0\ldots,Y_n$ such that $X_0 = Y_0, X_1 = Y_0 \circ Y_1, \ldots, X_n = Y_0 \circ \dots \circ Y_n$. The signed cocircuits are given by $Y_i = \sgn(\siinorm{\cdot}_i)$ for signed seminorms $\siinorm{\cdot}_i$ on $(\R^{n+1})^*$. As in the proof of Proposition \ref{prop:covectorsMuniv} these signed seminorms are diagonalized by an ordered basis $B = (b_0,\ldots,b_n)$ and coordinate vectors $e_i = (0,\ldots,0,1,0,\ldots,0)$ such that $\siinorm{\cdot}_i = \siinorm{\cdot}_{B,e_i}$. Then, $x = (\alpha_1 + \dots + \alpha_n) e_{Y_1} \circ (\alpha_2 + \dots + \alpha_n) e_{Y_2} \circ \dots \circ \alpha_n e_{Y_n}$. Setting $c_0 = \alpha_0 + \dots + \alpha_n, \dots, c_n = \alpha_n$ we obtain $x = \siinorm{\cdot}_{B,\Vec{c}}$. Conversely, a signed seminorm $\siinorm{\cdot}_{B,\Vec{c}}$ yields coordinates $\alpha_i$ and a flag of covectors of $\Muniv$ by reversing the construction.
\end{proof}

\begin{remark} \label{remark:infinitebergmanfan}
\begin{enumerate}[(a)]
\item The interpretation of Remark \ref{rem:GIspaceIsTropMuniv} (b) becomes even more clear in the case of real numbers with trivial valuation. For any finite subset $\{f_0,\dots,f_m\}=E'\subset E = (\R^{n+1})^*$, there is a restriction map from the covector poset of $\Muniv$ to the one of $\Muniv|_{E'}$ by Lemma \ref{lem:restrictionCovectors}. The induced map of the respective order complexes is exactly the map $\RX_n(\R) \to \Trop_r(\P^n,\iota)$ for $\iota=(f_0:\dots:f_m)$.
\item Proposition \ref{prop:covectors_to_flats} gives us a map from the covectors poset of $\mathcal{M}_{\univ}$ to the lattice of flats of the underlying matroid, the universal realizable matroid. The continuous map on the corresponding order complexes is exactly the map $\Phi: \RX_n(\R) \to \calXbar_n(\R)$ induced by taking absolute values. 
\item Combining (a) and (b), we can reprove Theorem \ref{thm:commutativityPhi} for real numbers with trivial valuation. Let $\Cov$ be the covector lattice of $\mathcal{M}_{\univ}$, $\mathcal{F}$ the lattice of flats of the underlying matroid, and $E'\subset E$ a finite generating subset. We observe that the following diagram commutes:
\begin{center}
\begin{tikzcd}
\Cov \ar[r] \ar[d] & \Cov|_{E'} \ar[d]\\
\mathcal{F} \ar[r] & \mathcal{F}|_{E'} 
\end{tikzcd}
\end{center}
If now $\iota: \P^n\to \P(\R^{E'})$ is the corresponding embedding, we obtain commutativity for
\begin{center}
\begin{tikzcd}
\RX_n(\R) \ar[r] \ar[d] & \Trop_r(\P^n,\iota) \ar[d]\\
\calXbar_n(\R) \ar[r] & \Trop(\P^n,\iota) \, .
\end{tikzcd}
\end{center}
\item Following the proof of Theorem \ref{thm:realBMfanBuilding},
if we replace $\R$ by any other trivially valued real closed field $K$, the real Bergman fan agrees with the diagonalizable locus in $\RX_n(K)$. The existence of non-diagonalizable signed seminorms (\emph{cf.}\ \ref{ex:not_diagonalizable}) shows, that the real Bergman fan is then a proper subspace.
\end{enumerate}
\end{remark}

\begin{remark}[Signed Tropical Convexity]\label{rem:TropicalConvexity}
By \cite[Theorem 16]{yuyuster}, a tropicalized linear space is the tropical 
convex hull of its valuated cocircuits, and by \cite[Theorem 7.8]{loho2022signedtropicalhalfspacesandconvexity} 
the real tropicalization of a linear space is the TC-convex hull of its 
$\R\T$-cocircuits. Specializing to the universal matroid: a $n$-ordered 
subset $\mu=(\mu_1,\dots,\mu_n)$ of $E=(K^{n+1})^\ast$ defines a signed 
seminorm $\siinorm{\cdot}_\mu$ via $\siinorm{f}_\mu = \varphi_{\univ}(f,\mu_1,\dots,\mu_n)$. 
Any diagonalizable signed seminorm $\siinorm{\cdot}_{B,\Vec{c}}$ decomposes 
as a composition of scalar multiples of the $\siinorm{\cdot}_{\mu_i}$ with 
$\mu_i = (b_0,\ldots,\hat{b_i},\ldots,b_n)$. So in particular,
$\RX_n(\R) = \Trop_r(\Muniv)$ is the TC-convex hull of the signed valuated 
cocircuits.
\end{remark}

\subsection*{Outlook and applications}

Several lines of further inquiry naturally extend the framework of this paper.

\emph{Reductive groups over real closed fields.} The signed Goldman--Iwahori 
space provides a real-analytic, non-Archimedean analogue of a symmetric 
space for $\PGL$ over real closed fields. It should be a natural starting 
point for a signed analogue of Bruhat--Tits theory for reductive groups 
over real closed valued fields, paralleling the role of $\calXbar_n(K)$ 
in the non-Archimedean theory \cite{Werner_seminorms, RemyThuillierWernerI, 
RemyThuillierWerner_survey}.

\emph{The non-realizable case.} The limit theorem 
(Theorem~\ref{mainthm:limits}) ranges over linear embeddings of $\P^n$, equivalently, over realizable oriented valuated matroids, and 
Theorem~\ref{mainthm:GIspaceIsTropMuniv} identifies the limit with the 
real tropical linear space of the universal \emph{realizable} oriented 
valuated matroid. The construction of $\Trop_r(\mathcal{M})$ in 
Section~\ref{section:Muniv}, however, applies to arbitrary finite-rank 
oriented valuated matroids, and it is natural to ask what space arises 
as the inverse limit of $\Trop_r(\mathcal{M})$ over a suitable category 
of all (possibly non-realizable) such matroids. A description of this 
space would be interesting to study.

\emph{Real toric vector bundles.} A classical result of Klyachko, recast tropically by Kaveh and Manon \cite{km22}, classifies $T$-equivariant vector bundles on a toric variety $X_\Sigma$ in terms of piecewise  $\mathbb{Z}$-linear maps from the fan $\Sigma$ to the building $B(\PGL_n(K))$. A non-trivially valued generalization can be found in \cite{kaveh2022toric}.
Over a real closed field 
$K$, the signed Goldman--Iwahori space $\RX_n(K)$ is the natural 
replacement for the building, and it would be interesting to ask whether 
\emph{real} toric vector bundles on $X_\Sigma$ admit an analogous 
classification by piecewise linear maps $\Sigma \to \RX_n(K)$, with the signed data encoding the real structure beyond the underlying complex bundle.

\bibliographystyle{amsalpha}
\bibliography{biblio}{}

\end{document}